\newtheorem{Theorem}{Theorem}[section]
\newtheorem{Proposition}[Theorem]{Proposition}
\newtheorem{Lemma}[Theorem]{Lemma}
\newtheorem{Corollary}[Theorem]{Corollary}
\theoremstyle{definition}
\newtheorem{Definition}[Theorem]{Definition}
\newtheorem{Remark}[Theorem]{Remark}
\newcommand{\bTheorem}[1]{
\begin{Theorem} \label{T#1} }
\newcommand{\eT}{\end{Theorem}}
\newcommand{\bProposition}[1]{
\begin{Proposition} \label{P#1}}
\newcommand{\eP}{\end{Proposition}}
\newcommand{\bLemma}[1]{
\begin{Lemma} \label{L#1} }
\newcommand{\eL}{\end{Lemma}}
\newcommand{\bCorollary}[1]{
\begin{Corollary} \label{C#1} }
\newcommand{\eC}{\end{Corollary}}
\renewcommand{\(}{\left(}
\renewcommand{\)}{\right)}
\def\blue#1{{\color{black}  #1}}
\newcommand*{\bigchi}{\mbox{\large$\chi$}}
\renewcommand{\u}{\mathbf{u}}
\newcommand{\vv}{\mathbf{v}}
\newcommand{\ww}{\mathbf{w}}
\newcommand{\w}{\mathbf{w}}
\renewcommand{\b}{\mathbf{b}}
\newcommand{\CC}{\mathbf{C}}
\newcommand{\DD}{\mathbf{D}}
\newcommand{\BB}{\mathbf{B}}
\newcommand{\I}{\mathbf{I}}
\newcommand{\TT}{\mathbf{T}}
\newcommand{\tr}[1]{\mathrm{tr}\({#1}\)}
\newcommand{\trr}[1]{\mathrm{tr}({#1})}
\newcommand{\trC}{\mathrm{tr}(\CC)}
\renewcommand{\div}[1]{\mathrm{div}\left({#1}\right)}
\newcommand{\di}[1]{\mathrm{div}\,{#1}}
\newcommand{\dib}[1]{\mathrm{div}\big({#1}\big)}
\newcommand{\p}{\partial}
\newcommand{\na}{\nabla}
\newcommand{\R}{\mathbb{R}}
\newcommand{\Du}{\mathrm{D}\u}
\newcommand{\td}{\frac{\mathrm{d}}{\mathrm{dt}}}
\newcommand{\pd}{\frac{\partial}{\partial t}}
\newcommand{\dx}{\,\mathrm{d}x}
\newcommand{\dd}{\,\mathrm{d}}
\newcommand{\dt}{\,\mathrm{d}t}
\newcommand{\dta}{\,\mathrm{d}t^\prime}
\renewcommand{\d}{\mathrm{D}}
\newcommand{\dtt}{\mathrm{Dt}}
\newcommand{\D}{\mathrm{\overset{\triangledown}{D}}}
\newcommand{\Dtt}{\mathrm{Dt}}
\newcommand{\F}{\mathcal{F}}
\DeclarePairedDelimiter{\inp}{\langle}{\rangle}
\DeclarePairedDelimiter{\norm}{\|}{\|}
\DeclarePairedDelimiter{\snorm}{|}{|}
\DeclareMathOperator{\spn}{span}
\newcommand\restr[2]{\ensuremath{\left.#1\right|_{#2}}}
\def\softd{{\leavevmode\setbox1=\hbox{d}%
          \hbox to 1.05\wd1{d\kern-0.4ex{\char039}\hss}}}%cstocs
\def\softl{{\leavevmode\setbox1=\hbox{l}%
		\hbox to 1.05\wd1{l\kern-0.4ex{\char039}\hss}}}%cstocs
\def\softt{{\leavevmode\setbox1=\hbox{t}%
          \hbox to 1.05\wd1{t\kern-0.4ex{\char039}\hss}}}%cstocs
  \newlength\oversetwidth
  \newlength\underwidth
\date{}
\begin{document}

\title{Global existence of weak solutions to  viscoelastic \\ phase separation:  Part I Regular Case}

\author{Aaron Brunk \and M\' aria Luk\' a\v cov\' a-Medvi\softd ov\'a
}

\date{\today}

\maketitle

\bigskip

\centerline{ Institute of Mathematics, Johannes Gutenberg-University Mainz}

\centerline{Staudingerweg 9, 55128 Mainz, Germany}

\centerline{abrunk@uni-mainz.de}
\centerline{lukacova@uni-mainz.de}

\begin{abstract}
\noindent We prove the existence of weak solutions to a viscoelastic phase separation problem in two space dimensions.
The mathematical model consists of a Cahn-Hilliard-type equation for two-phase flows and the Peterlin-Navier-Stokes equations for viscoelastic fluids. We focus on the case of a polynomial-like potential and suitably bounded coefficient functions. Using the Lagrange-Galerkin finite element method complex behavior of solution for spinodal decomposition including transient polymeric network structures is demonstrated.
\end{abstract}

{\bf Keywords:} two-phase flows, non-Newtonian fluids, Cahn-Hilliard equation, Peterlin viscoelastic model, Navier-Stokes equation, phase separation, Flory-Huggins potential

%\tableofcontents

\section{Introduction}
Phase separation of binary fluids is a fundamental process in soft matter physics. For Newtonian fluids this phenomenon is well-understood. In this case typically the so-called H model is used, which consists of the conservation of mass and momentum combined with a high-order nonlinear convection-diffusion equation for the phase variable $\phi$. The evolution of the phase variable is mainly governed by a gradient flow for the free energy functional and is deeply connected to the thermodynamics of the involved process. To avoid the formation of discontinuous interfaces the free energy functional is supplied with a penalty term for the gradient of $\phi$. From mathematical and numerical point of view, this allows describing topological changes without tracking interfaces. However, if one of the multiphase fluids happens to be a polymer, the physics becomes much more involved.
Thus, one has to consider non-Newtonian rheology together with multiphase effects. In this context Tanaka~\cite{Tanaka.} introduced a new concept of \emph{viscoelastic phase separation} that
governs  phase-separation  of a dynamically asymmetric mixture, which is composed of fast and slow components.
In dynamically asymmetric mixtures the phase separation  leads to new interesting structures, such as
 transient formation of network-like  structures of a slow-component-rich phase and its volume shrinking.
%It turns out that relaxation time couples to the intra-molecular processes and influences the phase separation on relevant time scales.
%This coupling leads to interesting structures and phenomenon for which the term \emph{viscoelastic phase separation} has been introduced by %Tanaka \cite{Tanaka.}.
Unfortunately the Tanaka model was not consistent with the second law of thermodynamics. In \cite{Zhou.2006} Zhou, Zhang, E derived a thermodynamically consistent model for viscoelastic phase separation. The key ingredient of both models is an additional viscoelastic stress tensor which is connected to the velocity difference of both multiphase fluids. \\
%\blue{Note that up to the current understanding it is not known if there is a intrinsic relation between this additional variable and the %conformation tensor.}   \\

The phase-field dynamics is described by a modified Cahn-Hilliard equation.
In literature we can find already a variety of analytical studies for the multiphase flow governed by the Cahn-Hilliard equation. Typically, models with a polynomial-type potential and strictly positive diffusion coefficient are considered, see, e.g. \cite{Elliott.1989,Elliott.1996,Temam.1997}. We will refer to such a situation as \emph{the regular case}, see Section~2.1 for further details. Other models available in literature deal with constant mobility functions and singular potentials \cite{Abels.2009,Miranville.2004} or degenerate mobility and singular potentials \cite{Abels.2013,Dai.2016,Cherfils.2011,Elliott.1996,Grun.1995}.

Taking also hydrodynamic effects into account yields the Navier-Stokes-Cahn-Hilliard system which has been studied  in, e.g., \cite{Abels.2009,Boyer.1999}. Including the viscoelastic effects leads to the Navier-Stokes-Cahn-Hilliard-Oldroyd-B system, for which the well-posedness of strong solutions has been studied in \cite{Chupin.2003}.
Recent results focus on the so-called non-local Cahn-Hilliard equations, see, e.g., \cite{Giacomin.1997,Giacomin.1998}. The non-local diffusion term has a better structure and  allows one to obtain better regularity properties and the maximum principles for degenerate mobility functions and singular potentials \cite{Colli.2012,Frigeri.2015,Londen.2011}. As far as we are aware, it is not clear how to obtain the local Cahn-Hilliard equation from the non-local one, see \cite{Melchionna.3262018}.
In \cite{Abels.2008,Barrett.2018,Feireisl.2016,Lowengrub.1998} time evolution of compressible polymer mixtures has been studied and the existence of global weak solutions was shown.\\

Although the model of Zhou, Zhang and E was successfully used for numerical simulations of  spinodal decomposition, see, e.g.~\cite{Zhou.2006, Strasser.2018}, its mathematical analysis was open.  The purpose of this paper is to fill this gap and investigate the \emph{existence of global weak solutions} of a related viscoelastic phase separation model.
More precisely, our model is similar to the viscoelastic phase separation model of Zhou et al., but we consider the diffusive Peterlin model for the time evolution of the elastic conformation tensor instead of the classical Oldroyd-B model. The Peterlin model \cite{LukacovaMedvidova.2015}  can be viewed as a nonlinear generalization of the classical Oldroyd-B model. Thus, our existence result also applies to the model of Zhou et al.~with additional small diffusive terms in the Oldroyd-B equations.
Following Barrett and S\"uli \cite{sueli} we note that the diffusive term appearing in the evolution equation for the conformation tensor should not be seen as  a  regularizing  term but rather an outcome of  physical modelling  allowing a heterogeneous fluid velocity around a dumbbell.
Indeed, as investigated by M\'alek et al. \cite{Malek} the stress
diffusion term can be interpreted either as a consequence of a nonlocal energy storage mechanism
or as a consequence of a nonlocal entropy production mechanism, see also \cite{degond, shieber} for related studies.  \\

 The main features of the proposed approach can be summarized as follows.
\begin{itemize}
	\item The existence proof is based on the energy method and compactness arguments in order to pass to  limits in nonlinear terms. It should be however pointed out that the viscoelastic phase separation contains cross-diffusion terms that require careful treatment since the energy method fails in such contexts in general.
  \item Extensive numerical simulations, from which a representative choice is presented in Section~9, confirm a good agreement with physical experiments of dynamical asymmetric mixtures, see \cite{Tanaka.}. In particular, formation of a new phenomena, such as transient network-like polymeric structures and volume shrinking has been observed, too. Mathematically speaking, the cross-diffusion terms in the $(\phi,q)$-subsystem, cf.~\eqref{eq:full_model}, yield a realistic model for structure formation effects, such as transient network-like polymeric structures.
	\item In this paper we consider a regular case with a polynomial-like potential function and suitably bounded mobilities.
Based on the current results we will study in our subsequent work \cite{Brunk.b} the existence of a global weak solution for the case with logarithmic Flory-Huggins potential and degenerate mobilities with possibly different rates. We refer a reader to \cite{Elliott.1996} where similar situation with one mobility function has been studied.
\end{itemize}

The paper is organized in the following way. In Section 2 we present the mathematical model for viscoelastic phase separation. The weak solution to our viscoelastic phase separation model is introduced in Section 3 and the main result on the existence of global weak solutions is formulated in Section 4. Sections 5-8 are devoted to the proof of our main result. In Section 9 we propose a Lagrange-Galerkin finite element method. Further, we present several numerical simulations of spinodal decomposition. Numerical experiments indicate that our numerical method is energy-stable and mass conservative.

\section{Mathematical Model}
The viscoelastic phase separation can be described by a coupled system consisting of the Cahn-Hilliard equation for phase field evolution, the Navier-Stokes equation for fluid flow and the time evolution of the viscoelastic conformation tensor. A classical approach to model the evolution of interfaces is the diffusive interface theory.\\
In the recent paper by Zhou et al.~\cite{Zhou.2006} the classical interface theory has been combined with time evolution of a viscoelastic fluid in order to model viscoelastic phase separation. The total energy of the polymer-solvent mixture consists of the mixing energy between the polymer and the solvent, the bulk stress energy, the elastic energy and the kinetic energy.
\begin{align}
E_{tot}(\phi,q,\CC,\u)&=  E_{mix}(\phi) + E_{bulk}(q) + E_{el}(\CC) + E_{kin}(\u) \label{eq:free_energy}\\
&=\int_\Omega \(\frac{c_0}{2}\snorm*{\na\phi}^2 + F(\phi)\) + \int_\Omega \frac{1}{2}q^2 + \int_\Omega \(\frac{1}{4}\tr{\trC\CC - 2\ln(\CC) - \I}\) + \int_\Omega \frac{1}{2}\snorm*{\u}^2, \nonumber
\end{align}
where $\phi$ denotes the volume fraction of polymer molecules, $q\I$ the bulk stress arising from polymeric interactions, $\CC$ the viscoelastic conformation tensor and $\u$ the volume averaged velocity consisting of solvent and polymer velocity. Furthermore, $c_0$ is a positive constant controlling the interface width. In the present work we will work with the Ginzburg-Landau potential,
\begin{equation}
F(\phi)=a\phi^2(\phi-1)^2, \label{eq:ginz}
\end{equation}
which is typically used as a regular approximation of the (more physically relevant) logarithmic Flory-Huggins potential
\begin{equation}
\label{eq:fhpot}
F_{log}(\phi) = \frac{1}{n_p}\phi\ln(\phi) + \frac{1}{n_s}(1-\phi)\ln(1-\phi) + \chi\phi(1-\phi),
\end{equation}
where $n_p,n_s$ stand for the molecular weights of the polymer and solvent, respectively. $\chi$ is the so-called Flory interaction parameter which describes the interaction between the two components and is proportional to the inverse temperature. \\
In the forthcoming analysis it will be enough to assume the potential behaves like a polynomial. More precisely, we assume that the potential $F \in C^2(\R) $ and there are constants $c_i > 0$ for $i\in\{1,\ldots,7\}$ and $c_8\geq 0$ that the following holds true,
\begin{align}
|F(s)| \leq c_1|s|^{p} + c_2&, \hspace{1em}|F^{\prime}(s)| \leq c_3|s|^{p-1} + c_4, \hspace{1em}|F^{\prime\prime}(s)| \leq c_5|s|^{p-2} + c_6\textnormal{ for } p\geq 2, \\
F(s) \geq -c_7&, \quad F^{\prime\prime}(s) \geq - c_8.\label{eq:coeff4}
\end{align}
Further, we will set $F^\prime(s)=f(s)$.\\

Following  \cite{Zhou.2006} we  can derive a multiphase model for the viscoelastic phase separation, see also Appendix, where a brief derivation in the context of GENERIC \cite{Grmela.1997b,Grmela.1997} and of
\cite{Malek, MALEK201542} is presented.

\begin{tcolorbox}
	\begin{align}
	\label{eq:full_model}
	\begin{split}
	\frac{\partial \phi}{\partial t} + \hspace{1em}\u\cdot\nabla\phi  &= \dib{m(\phi)\nabla\mu} - \kappa\dib{n(\phi)\nabla\big(A(\phi)q\big)} \\
	\frac{\partial q   }{\partial t} + \hspace{1em}\u\cdot\nabla q    &= -\frac{1}{\tau(\phi)}q + A(\phi)\Delta\big(A(\phi)q\big) - \kappa A(\phi)\dib{n(\phi)\nabla\mu} \\
	\frac{\partial\u   }{\partial t} + (\u\cdot\nabla)\u  &= \dib{\eta(\phi)\big(\nabla\u + (\nabla\u)^\top\big)} -\nabla p + \di{\TT}  + \nabla\phi\mu \\
	\frac{\partial\CC  }{\partial t} + (\u\cdot\nabla)\CC &= (\nabla\u)\CC + \CC(\nabla\u)^\top + \bigchi(\phi,\trC)\mathbf{I} - \Phi(\phi,\trC)\CC + \varepsilon\Delta\CC \\
	\div{\u} &= 0 \hspace{1cm}\TT = \tr{\CC}\CC  \hspace{1cm}   \mu = -c_0\Delta\phi + f(\phi)
	\end{split}
	\end{align}
\end{tcolorbox}
Here we have denoted by $\mu$ the chemical potential. This allows us to write (\ref{eq:full_model}) in the saddle point form which is more convenient for our further study.
System (\ref{eq:full_model}) is formulated on $(0,T)\times\Omega$, where $\Omega\subset\mathbb{R}^2$ is a bounded domain with a Lipschitz-continuous boundary. Our model is equipped with the following initial and boundary conditions
\begin{align*}
\restr{(\phi,q,\u,\CC)}{t=0} = (\phi_0 , q_0, \u_0, \CC_0), \quad \restr{\p_n\phi}{\p\Omega}=\restr{\p_n\mu}{\p\Omega}=\restr{\p_nq}{\p\Omega}=0, \quad \restr{\u}{\p\Omega} = \mathbf{0}, \quad \restr{\p_n\CC}{\p\Omega} = \mathbf{0}.
\end{align*}

The viscoelastic stress tensor is given by
\begin{equation*}
\TT = \tr{\CC}\CC.
\end{equation*}

The functions $m(\phi),n(\phi)$ stand for mobility functions, $\tau(\phi),\bigchi(\phi,\tr{\CC)}^{-1},\Phi(\phi,\tr{\CC})^{-1}$ describe generalized relaxation times, $A(\phi)$ the bulk modulus, $\eta(\phi)$ the viscosity, $\varepsilon$ the viscoelastic diffusion parameter and $\kappa$ a positive constant. These functions will be specified in Section \ref{subsec:ass}.
\subsection{Assumptions}
\label{subsec:ass}
In what follows, we will assume that all of the coefficient functions are continuous, positive and bounded, i.e.
\begin{equation}
\label{eq:coeff1}
0<\tau_1 \leq \tau(s) \leq \tau_2, \hspace{1em} 0 < A_1 \leq A(s) \leq A_2,  \hspace{1em} 0 < \eta_1 \leq \eta(s) \leq \eta_2  \textrm{ for all } s\in \mathbb{R}.
\end{equation}
Additionally $A(s)$ is a $C^1-$function and $\norm*{A^\prime}_\infty \leq A_2^\prime.$ For the generalized relaxation times $\bigchi$ and $\Phi$ we choose analogously as in \cite{LukacovaMedvidova.2015}
\begin{equation}
\bigchi(\phi,\CC) = h(\phi)\tr{\CC}, \hspace{2em} \Phi(\phi,\CC) = h(\phi)\tr{\CC}^2.
\label{eq:conf_assump}
\end{equation}
Let us point out that it may be possible to generalize our results similarly as in \cite{LukacovaMedvidova.2017} for
\begin{equation*}
\bigchi(\phi,\CC) = h(\phi)\tr{\CC}^{\alpha - 1}, \hspace{2em} \Phi(\phi,\CC) = h(\phi)\tr{\CC}^{\alpha},\;\alpha \in [1,6].
\end{equation*}
We further assume that the functions $h,m,n$ are continuous, positive and bounded,
\begin{equation}
\label{eq:coeff2}
0 < h_1\leq h(s) \leq h_2, \;\; 0 \leq m_1\leq m(s) \leq m_2 \textrm{ and } 0 \leq n_1 < n(s) \leq n_2 \textrm{ for all } s\in \mathbb{R}.
\end{equation}
The case $m_1, n_1>0$ is called regular case. In our future work \cite{Brunk.b} we extend the present study to the so-called \emph{degenerate case}, where $m_1=n_1=0.$
%-----------------------------------------------------------------------------------------------------------------------------

\subsection{Preliminaries}
In this section we introduce suitable notation and required analytical tools. For the standard Lebesgue spaces $L^p(\Omega)$ the norm is denoted by $\norm*{\cdot}_p$. Further, we denote the space of divergence free and mean free functions by the
\begin{equation*}
L^2_{\text{div}}(\Omega):= \overline{C_{0,\text{div}}^\infty(\Omega)}^{\norm*{\cdot}_2},  \hspace{2em}  L^2_0(\Omega):=\left\{u \in L^2(\Omega) \mathrel{\bigg|} \int_\Omega u \,\mathrm{d}x  =0\right\},
\end{equation*}
respectively. We use the standard notation for the Sobolev spaces and set
\begin{equation*}
V:=H^1_{0,\text{div}}(\Omega)^2, \hspace{2em} H:=L^2_{\text{div}}(\Omega)^2, \hspace{2em} W:=H^1(\Omega)^{2\times 2}.
\end{equation*}
The space $V$ is equipped with the norm $\norm*{\cdot}_V:=\norm*{\na\cdot}_2$.
We denote the norms of the corresponding Bochner space $L^p(0,T;L^q(\Omega))$ by $\norm*{\cdot}_{L^p(L^q)}.$

In what follows, we state several inequalities that will be used later.
\begin{Proposition}[\cite{Agmon.2010},\cite{Ladyzenskaja.1968}]
	Let $\Omega\subset\R^2$ be a bounded smooth domain. Then the following inequalities hold true
	\begin{align}
	\norm*{u}_4 &\leq c\norm*{u}_2^{1/2}\norm*{\na u}_2^{1/2}, \hspace{7.7em} u\in H^1_0(\Omega) \label{eq:lad1}\\
	\norm*{v}_4 &\leq c(\Omega)\(\norm{v}+ \norm*{v}_2^{1/2}\norm*{\na v}_2^{1/2}\), \hspace{2.1em} v\in H^1(\Omega) \label{eq:lad2}\\
	\norm*{u}_r &\leq c(\Omega)\(\norm*{v}_2+ \norm*{v}_2^{2/r}\norm*{\na u}_2^{(r-2)/r} \), \text{ for } r>2,  v\in H^1(\Omega) \label{eq:ladr}\\
	\norm*{v}_{\infty} &\leq c\norm*{v}_2^{1/2}\norm*{v}_{2,2}^{1/2}, \hspace{8.5em} v\in H^2(\Omega)\cap H^1_0(\Omega). \label{eq:agmon}
	\end{align}
\end{Proposition}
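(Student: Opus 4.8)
All four estimates are two-dimensional instances of the Gagliardo--Nirenberg interpolation inequality, so the plan is to prove each one by a self-contained classical argument on $\R^2$ and then transfer it to the bounded smooth domain $\Omega$ by a bounded linear extension operator $E\colon H^k(\Omega)\to H^k(\R^2)$ ($k=1,2$), which may be taken with support in a fixed ball $B\supset\overline\Omega$ and which satisfies simultaneously $\|Ev\|_{L^2(\R^2)}\le c\|v\|_{L^2(\Omega)}$ and $\|Ev\|_{H^k(\R^2)}\le c\|v\|_{H^k(\Omega)}$; such an operator exists because $\p\Omega$ is smooth. We fix once and for all that $\|\cdot\|=\|\cdot\|_2$.

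\textbf{Step 1 (inequality \eqref{eq:lad1}).} For $u\in C_0^\infty(\Omega)$, extended by zero to $\R^2$, write for every $(x_1,x_2)$
\[
u(x_1,x_2)^2 = 2\int_{-\infty}^{x_1} u\,\p_1 u\,\dd s \le 2\int_{\R}|u||\p_1 u|\,\dd s,\qquad u(x_1,x_2)^2 \le 2\int_{\R}|u||\p_2 u|\,\dd t .
\]
Multiplying these two bounds, integrating over $\R^2$, and using Fubini's theorem and the Cauchy--Schwarz inequality gives
\[
\int_{\R^2} u^4\dx \le 4\,\|u\|_2^2\,\|\p_1 u\|_2\,\|\p_2 u\|_2 \le 2\,\|u\|_2^2\,\|\na u\|_2^2 ,
\]
and \eqref{eq:lad1} follows after taking fourth roots and using the density of $C_0^\infty(\Omega)$ in $H_0^1(\Omega)$.

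\textbf{Step 2 (inequalities \eqref{eq:lad2} and \eqref{eq:ladr}).} Apply \eqref{eq:lad1}, respectively the Gagliardo--Nirenberg inequality $\|w\|_r\le c\,\|w\|_2^{2/r}\|\na w\|_2^{(r-2)/r}$ on $\R^2$ (valid for all $r\in[2,\infty)$), to $w=Ev$, and insert $\|Ev\|_2\le c\|v\|_2$ together with $\|\na Ev\|_2\le c(\|v\|_2+\|\na v\|_2)$. Since $t\mapsto t^{\theta}$ is subadditive for $\theta\in(0,1)$, one has $(\|v\|_2+\|\na v\|_2)^{\theta}\le\|v\|_2^{\theta}+\|\na v\|_2^{\theta}$, and this turns the resulting bounds into precisely the right-hand sides of \eqref{eq:lad2} and \eqref{eq:ladr}.

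\textbf{Step 3 (Agmon's inequality \eqref{eq:agmon}).} In two dimensions $H^1\not\hookrightarrow L^\infty$, so one genuinely needs the $H^2$-norm. Extend $v\in H^2(\Omega)\cap H_0^1(\Omega)$ to $\widetilde v=Ev\in H^2(\R^2)$ and estimate $\|v\|_\infty\le\|\widetilde v\|_\infty\le c\,\|\widehat{\widetilde v}\|_1$. Splitting the frequency integral at $|\xi|=R>0$ and applying the Cauchy--Schwarz inequality on each piece,
\[
\int_{\R^2}|\widehat{\widetilde v}|\,\dd\xi \le (\pi R^2)^{1/2}\,\|\widehat{\widetilde v}\|_2 + \Big(\int_{|\xi|\ge R}|\xi|^{-4}\,\dd\xi\Big)^{1/2}\big\||\xi|^2\widehat{\widetilde v}\big\|_2 \le c\,R\,\|\widetilde v\|_2 + c\,R^{-1}\|\widetilde v\|_{H^2},
\]
where the crucial point is that $\int_{|\xi|\ge R}|\xi|^{-4}\,\dd\xi=\pi R^{-2}<\infty$ because the space dimension $2$ lies strictly below $4$. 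Optimising the right-hand side over $R$ (i.e. $R^2=\|\widetilde v\|_{H^2}/\|\widetilde v\|_2$) yields $\|v\|_\infty\le c\,\|\widetilde v\|_2^{1/2}\|\widetilde v\|_{H^2}^{1/2}\le c\,\|v\|_2^{1/2}\|v\|_{2,2}^{1/2}$, which is \eqref{eq:agmon}.

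\textbf{Where the difficulty lies.} The arguments themselves are only Fubini, Cauchy--Schwarz, subadditivity of powers, and a dyadic-type Fourier split; two points deserve care. First, the existence and the stated mapping properties of the bounded extension operator: this is where the smoothness of $\p\Omega$ enters, and it is exactly the reason why the lower-order terms $\|v\|$, $\|v\|_2$ appear on the right-hand sides of \eqref{eq:lad2}--\eqref{eq:ladr} (for $H_0^1$-functions, extension by zero suffices and those terms are absent, as in \eqref{eq:lad1}). Second, the borderline character of \eqref{eq:agmon} in dimension two: no Sobolev embedding of $H^1$ into $L^\infty$ is available, so one must exploit the second derivatives through the frequency decomposition above (equivalently, through a Brezis--Gallouet-type estimate). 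For the purposes of this paper one may of course simply invoke \cite{Agmon.2010,Ladyzenskaja.1968}.
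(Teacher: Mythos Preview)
Your proof sketch is correct and follows the standard classical arguments. Note, however, that the paper itself does not prove this proposition at all: it is stated with attributions to \cite{Agmon.2010} and \cite{Ladyzenskaja.1968} and no proof is given. So there is nothing to compare against --- you have supplied a self-contained argument where the paper simply invokes the literature, which is entirely appropriate for well-known interpolation inequalities of this type.
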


\begin{Lemma}
	\label{lema:weakproduct}
	Suppose that there are two sequences $\{u_n\}, \{v_n\}$ and a bounded domain $D$ with the following properties
	\begin{enumerate}
		\item $u_n \rightarrow u$ a.e in $D$ and $\norm*{u_n}_\infty \leq c < \infty$ for all $n$
		\item $v_n \rightharpoonup v$ in $L^2(D)$.
	\end{enumerate}
	Then the product $u_nv_n$ converges weakly to $uv$ in $L^2(D)$.
\end{Lemma}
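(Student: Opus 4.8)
The plan is to split the product as $u_n v_n = (u_n - u) v_n + u v_n$ and treat the two pieces separately. The second piece is immediate: since $u$ is a fixed bounded function (it is the a.e.\ limit of the $u_n$, hence $\norm*{u}_\infty \le c$ by Fatou), multiplication by $u$ is a bounded linear operator on $L^2(D)$, so $v_n \rightharpoonup v$ in $L^2(D)$ gives $u v_n \rightharpoonup u v$ in $L^2(D)$. Therefore it suffices to show that the first piece $(u_n - u) v_n$ converges to $0$ \emph{weakly} in $L^2(D)$.

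For that, fix an arbitrary test function $\psi \in L^2(D)$ and estimate
\begin{equation*}
\snorm*{\int_D (u_n - u) v_n \psi \dx} \le \Big( \int_D (u_n-u)^2 \psi^2 \dx \Big)^{1/2} \norm*{v_n}_2.
\end{equation*}
The sequence $\norm*{v_n}_2$ is bounded, because weakly convergent sequences in a Hilbert space are norm-bounded (uniform boundedness principle). For the remaining factor, note that $(u_n - u)^2 \psi^2 \to 0$ a.e.\ in $D$ by assumption (1), and $(u_n - u)^2 \psi^2 \le (2c)^2 \psi^2 \in L^1(D)$, so the dominated convergence theorem yields $\int_D (u_n-u)^2 \psi^2 \dx \to 0$. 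Hence $\int_D (u_n-u) v_n \psi \dx \to 0$ for every $\psi \in L^2(D)$, which is exactly weak convergence of $(u_n-u)v_n$ to $0$ in $L^2(D)$. Adding the two pieces gives $u_n v_n \rightharpoonup u v$ in $L^2(D)$, as claimed.

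The only mild subtlety — and the single point one must be careful about — is to justify that the quantities $u_n v_n$ actually lie in $L^2(D)$ and that $\norm*{v_n}_2$ is bounded: the former follows from the uniform $L^\infty$ bound on $u_n$, the latter from weak convergence. Beyond that the argument is entirely a routine decomposition plus dominated convergence, so there is no genuine obstacle; the statement is essentially the observation that strong-$L^\infty$-times-weak-$L^2$ multiplication is (weakly) continuous when the strong factor additionally converges a.e.
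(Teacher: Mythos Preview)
Your proof is correct and follows essentially the same route as the paper: both split $u_nv_n-uv$ into a term containing $u_n-u$ (handled via Cauchy--Schwarz, boundedness of $\norm*{v_n}_2$, and dominated convergence for $(u_n-u)^2\psi^2$) and a term containing $v_n-v$ (handled directly by weak convergence against the fixed $L^2$ function $u\psi$). The only cosmetic difference is that the paper phrases the first step as strong convergence of $gu_n\to gu$ in $L^2$ via the generalized dominated convergence theorem, whereas you invoke ordinary dominated convergence with the explicit majorant $(2c)^2\psi^2$.
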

\begin{proof}
	Weak convergence of $u_nv_n$ in $L^2(D)$ is defined by
	\begin{equation}
	\int_D \big( u_nv_n - uv \big) g \longrightarrow 0, n \rightarrow \infty \; \text{ for all } g\in L^2(D).
	\end{equation}
	Splitting the integral we obtain
	\begin{equation}
	\left|\int_D \big( u_nv_n - uv \big) g \right| \leq \left|\int_D  gv_n\big(u_n - u\big)\right| + \left|\int_D gu\big(v_n-v\big)\right|
	\end{equation}
	The first integral can be controlled by $ \norm*{gu_n - gu}_2\norm*{v_n}_2 $
	which goes to zero due to the boundedness of $v_n$ in $L^2(D)$ and the strong convergence of $gu_n$ in $L^2(D)$. This strong convergence can be achieved by the generalized dominated convergence theorem. Since $gu\in L^2(D)$, the second integral tends to zero due to the weak convergence of $v_n$. Note that in the proof we have also shown that $gu_n$ converges strongly in $L^2(D)$.
\end{proof}

We will frequently use the interpolation spaces and their norms.
\begin{Lemma}[\cite{Boyer.1999}]
	\label{lema:lionsaubin}
	Let $X\subset Y \subset Z$ three Hilbert spaces, and suppose that the embedding of $X$ in $Y$ is compact. Then:
	\begin{enumerate}
		\item[i)] For any $p,q \in (1,\infty)$ the embedding
		\begin{equation*}
		\left\{f\in L^p(0,T;X), \frac{\dd f}{\dt}\in L^q(0,T;Z)\right\} \hookrightarrow L^p(0,T;Y)
		\end{equation*}
		is compact.
		\item[ii)] For any $p>1$ the embedding
		\begin{equation*}
		\left\{f\in L^\infty(0,T;X), \frac{\dd f}{\dt}\in L^p(0,T;Z)\right\} \hookrightarrow C([0,T];Y)
		\end{equation*}	
		is compact.
		\item[iii)] The following continuous embeddings holds
		\begin{equation*}
		\left\{f\in L^2(0,T;X), \frac{\dd f}{\dt}\in L^2(0,T;Y)\right\} \hookrightarrow C(0,T;[X,Y]_{\frac{1}{2}}).
		\end{equation*}	
	\end{enumerate}	
\end{Lemma}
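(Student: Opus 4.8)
\emph{Sketch of the proof.} The plan is to view the three statements as instances of two classical principles: (i) and (ii) are of Aubin--Lions--Simon type, while (iii) is the Lions--Magenes trace theorem for a Hilbert scale. The common tool for (i) and (ii) is the \emph{Ehrling inequality}: since $X$ is compactly embedded in $Y$, a standard contradiction argument yields, for every $\varepsilon>0$, a constant $C_\varepsilon>0$ with $\norm{v}_Y\le\varepsilon\norm{v}_X+C_\varepsilon\norm{v}_Z$ for all $v\in X$.

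For (i) I would take a sequence $\{f_n\}$ bounded in the left-hand space, so that $\{f_n\}$ is bounded in $L^p(0,T;X)$ and $\{f_n'\}$ is bounded in $L^q(0,T;Z)$, and first prove relative compactness in $L^p(0,T;Z)$. For this I would use the Fréchet--Kolmogorov (Simon) criterion: the local means $\int_{t_1}^{t_2}f_n\,\mathrm{d}t$ are bounded in $X$, hence relatively compact in $Z$ by the compact embedding $X\hookrightarrow Y\hookrightarrow Z$, and, for $0<h<T$ and a suitable $\theta>0$,
\[
\norm{f_n(\cdot+h)-f_n}_{L^p(0,T-h;Z)}\le C\,h^{\theta}\,\norm{f_n'}_{L^q(0,T;Z)},
\]
which tends to $0$ as $h\to0$ uniformly in $n$ by Hölder's inequality in time (this is where $q>1$ is used). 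The Ehrling inequality then upgrades a strongly $L^p(0,T;Z)$-convergent subsequence to a strongly $L^p(0,T;Y)$-convergent one, since $\norm{f_n-f_m}_{L^p(0,T;Y)}\le 2\varepsilon\sup_k\norm{f_k}_{L^p(0,T;X)}+C_\varepsilon\norm{f_n-f_m}_{L^p(0,T;Z)}$.

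For (ii) I would run the Arzelà--Ascoli theorem in $C([0,T];Y)$. If $\{f_n\}$ is bounded in $L^\infty(0,T;X)$ with $\{f_n'\}$ bounded in $L^p(0,T;Z)$, $p>1$, then $f_n\in W^{1,p}(0,T;Z)\hookrightarrow C([0,T];Z)$, and since $X$ is reflexive this continuous $Z$-valued representative in fact takes values in $X$ with $\norm{f_n(t)}_X\le\norm{f_n}_{L^\infty(0,T;X)}$ for every $t$; hence $\{f_n(t)\}_n$ is relatively compact in $Y$ for each fixed $t$. Equicontinuity in $Y$ follows again from Ehrling together with Hölder, namely $\norm{f_n(t)-f_n(s)}_Y\le 2\varepsilon\sup_k\norm{f_k}_{L^\infty(0,T;X)}+C_\varepsilon\,|t-s|^{\,1-1/p}\norm{f_n'}_{L^p(0,T;Z)}$, which is uniformly small; Arzelà--Ascoli then provides a subsequence converging in $C([0,T];Y)$, i.e. the embedding is compact.

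For (iii), after reducing to the case of $X$ dense in $Y$ (replacing $Y$ by the closure of $X$ in $Y$), I would introduce the positive self-adjoint operator $A$ on $Y$ with $D(A)=X$ associated with the embedding, so that, by interpolation of domains, $D(A^{1/2})=[X,Y]_{1/2}$ with equivalent norms. The substitution $g:=A^{1/2}f$ converts the hypothesis $f\in L^2(0,T;X)$, $f'\in L^2(0,T;Y)$ into $g\in L^2(0,T;D(A^{1/2}))$, $g'\in L^2(0,T;D(A^{-1/2}))$, that is, into the setting of the Gelfand triple $D(A^{1/2})\hookrightarrow Y\hookrightarrow D(A^{-1/2})$, for which the classical lemma---proved by mollifying $g$ in time and passing to the limit in the identity $\frac{\mathrm{d}}{\mathrm{d}t}\norm{g(t)}_Y^2=2\langle g'(t),g(t)\rangle$---gives $g\in C([0,T];Y)$ with $\sup_t\norm{g(t)}_Y\le C\big(\norm{g}_{L^2(0,T;D(A^{1/2}))}+\norm{g'}_{L^2(0,T;D(A^{-1/2}))}\big)$; applying $A^{-1/2}$ returns $f\in C([0,T];[X,Y]_{1/2})$ with the stated norm estimate. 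I expect the obstacles to be purely technical: in (i), keeping track of the time-integrability exponents in the translation estimate (in particular when $p>q$, where Hölder must first be applied inside the inner time integral), and in (iii), the two inputs used as black boxes---the identification $[X,Y]_{1/2}=D(A^{1/2})$ and the integration-by-parts identity for $\frac{\mathrm{d}}{\mathrm{d}t}\norm{g}_Y^2$, both of which are justified through density of smooth time-dependent functions. Since these are standard and the statement is quoted from \cite{Boyer.1999}, in the write-up I would cite rather than reprove them.
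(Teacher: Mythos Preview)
Your sketch is correct and follows the standard route (Ehrling plus Simon's compactness criterion for (i), Arzel\`a--Ascoli for (ii), and the Lions--Magenes trace argument via the square root of the self-adjoint operator for (iii)). Note, however, that the paper does not prove this lemma at all: it is stated with a citation to \cite{Boyer.1999} and used as a black box, so there is no ``paper's own proof'' to compare against---your final remark that in the write-up you would cite rather than reprove is exactly what the authors do.
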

The space $[X,Y]_{\frac{1}{2}}$ is an interpolation space, see \cite{Boyer.1999}, and we will use this result for the spaces $[H^2,H^{-1}]_{\frac{1}{2}} = H^{1/2}$.

\begin{Proposition}
For a matrix valued function $\DD\in\R^{d\times d}$ and $p \geq 2$ we have
	\begin{equation*}
	\norm*{\tr{\DD}}^p_{p} := \int_\Omega \left(\sum_{i=1}^d \DD_{ii}\right)^p \dx \leq d^{p-1}\int_\Omega \sum_{i,j=1}^d \snorm*{\DD_{ij}^p} \dx=: \norm*{\DD}^p_p .
	\end{equation*}
	For symmetric positive definite matrices both norms are equivalent.
\end{Proposition}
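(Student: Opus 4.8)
The plan is to reduce the asserted inequality to an elementary pointwise estimate and then integrate. The heart of the matter is the scalar bound $\snorm*{\sum_{i=1}^d a_i}^p \le d^{p-1}\sum_{i=1}^d \snorm*{a_i}^p$, valid for all real $a_1,\dots,a_d$ and all $p\ge 1$: it is Jensen's inequality for the convex map $t\mapsto\snorm*{t}^p$ (write $\sum_i a_i = d\cdot\frac1d\sum_i a_i$), or equivalently Hölder's inequality on the index set $\{1,\dots,d\}$ with exponents $p$ and $p/(p-1)$, namely $\sum_i 1\cdot\snorm*{a_i}\le d^{1-1/p}\big(\sum_i\snorm*{a_i}^p\big)^{1/p}$, raised to the $p$-th power. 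Applying this with $a_i=\DD_{ii}(x)$ yields $\big(\sum_i\DD_{ii}(x)\big)^p\le d^{p-1}\sum_i\snorm*{\DD_{ii}(x)}^p$ for a.e.\ $x\in\Omega$; integrating over $\Omega$ and then enlarging the right-hand sum from the diagonal entries to all entries $\sum_{i,j}\snorm*{\DD_{ij}}^p$ (only nonnegative terms are added) gives precisely $\norm*{\tr{\DD}}_p^p\le d^{p-1}\norm*{\DD}_p^p$.

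For the equivalence in the symmetric positive semidefinite case I would establish the reverse bound $\norm*{\DD}_p^p\le d^2\norm*{\tr{\DD}}_p^p$. The key point is that every $2\times2$ principal minor of a symmetric positive semidefinite matrix is nonnegative, i.e.\ $\DD_{ii}\DD_{jj}-\DD_{ij}^2\ge0$, so that $\snorm*{\DD_{ij}}\le\sqrt{\DD_{ii}\DD_{jj}}\le\frac12(\DD_{ii}+\DD_{jj})\le\tr{\DD}$, the last inequality using that all diagonal entries are nonnegative. Hence $\snorm*{\DD_{ij}(x)}^p\le(\tr{\DD}(x))^p$ for each of the $d^2$ pairs $(i,j)$; summing over $i,j$ and integrating gives $\norm*{\DD}_p^p\le d^2\int_\Omega(\tr{\DD})^p\dx=d^2\norm*{\tr{\DD}}_p^p$, where the absolute value around $\tr{\DD}$ may be dropped because $\tr{\DD}\ge0$. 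Combining the two directions yields $d^{-2/p}\norm*{\DD}_p\le\norm*{\tr{\DD}}_p\le d^{(p-1)/p}\norm*{\DD}_p$.

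There is no genuine obstacle here — the statement is elementary — but a few points deserve attention. First, the scalar inequality must be applied to the trace itself rather than entrywise with signs, since a priori $\tr{\DD}$ need not be of one sign, which is why the definition of $\norm*{\tr{\DD}}_p$ carries an absolute value; in the positive-definite case this subtlety disappears. Second, positive semidefiniteness is used exactly twice in the reverse bound: once for $\snorm*{\DD_{ij}}\le\sqrt{\DD_{ii}\DD_{jj}}$ and once for $\tr{\DD}\ge0$. Finally, it is worth recording the explicit constants $d^{p-1}$ and $d^2$, since later energy estimates invoke this proposition with $d=2$ and the precise power of $d$ in the ``easy'' direction matters there.
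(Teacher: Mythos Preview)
Your argument is correct. The paper states this proposition without proof, treating it as an elementary fact in the preliminaries; your write-up fills in precisely the details the paper omits (H\"older/Jensen for the forward direction, the principal-minor bound $\snorm*{\DD_{ij}}\le\sqrt{\DD_{ii}\DD_{jj}}$ for the reverse inequality in the positive-definite case), and the explicit constants $d^{p-1}$ and $d^2$ you record are sharp enough for the later applications with $d=2$.
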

The norm $\norm*{\DD}_p$ is the so-called Frobenius norm. We denote by $\CC:\DD= \tr{\CC\DD^\top}$ the Frobenius inner product.

\begin{Proposition}[\cite{Temam.1997}, \cite{LukacovaMedvidova.2015}]
	For any open set $\Omega\subset\mathbb{R}^2$ it holds that the forms
	\begin{equation*}
	\b(\u,\vv,\w)\equiv \int_\Omega (\u\cdot\na)\vv\cdot \w \dx  \text{ and } \;\; \BB(\u,\CC,\DD)\equiv \int_\Omega (\u\cdot\na)\CC:\DD \dx
	\end{equation*}
	are continuous and trilinear on $V\times V\times V$ and $V\times W\times W$, respectively. Further the following properties hold
	\begin{align*}
	\b(\u,\u,\vv) &= - \b(\u,\vv,\u), \;\;\;\;\;\;\;\;\;\;\u \in V, \vv\in H^1_0 \\
	\BB(\u,\CC,\DD) &= -\BB(\u,\DD,\CC),  \;\;\;\;\;\;\;\u \in V, \CC,\DD \in W \\
	\snorm*{\b(\u,\ww,\vv)} &= \snorm*{\b(\u,\vv,\ww)} \;\;\leq c\norm*{\u}_4\norm*{\vv}_V\norm*{\ww}_4\\
	\snorm*{\BB(\u,\CC,\DD)} &= \snorm*{\BB(\u,\DD,\CC)} \leq c\norm*{\u}_4\norm*{\na\DD}_2\norm*{\CC}_4 .
	\end{align*}
\end{Proposition}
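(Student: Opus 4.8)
The plan is to verify the stated properties in turn, treating $\b$ and $\BB$ in parallel, since the argument for $\BB$ is the one for $\b$ repeated componentwise with the Frobenius pairing $\CC:\DD$ in place of the Euclidean one.

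Trilinearity is immediate from the pointwise identities $\b(\u,\vv,\w)=\int_\Omega\sum_{i,j}u_i(\p_iv_j)w_j\dx$ and $\BB(\u,\CC,\DD)=\int_\Omega\sum_{i,j,k}u_i(\p_iC_{jk})D_{jk}\dx$, whose integrands are linear in each of the three arguments separately. For the pointwise bounds I would apply the generalized H\"older inequality with exponents $(4,2,4)$, which gives directly
\[
\snorm*{\b(\u,\vv,\w)}\le\norm*{\u}_4\,\norm*{\na\vv}_2\,\norm*{\w}_4=\norm*{\u}_4\,\norm*{\vv}_V\,\norm*{\w}_4,
\qquad
\snorm*{\BB(\u,\DD,\CC)}\le\norm*{\u}_4\,\norm*{\na\DD}_2\,\norm*{\CC}_4 .
\]
Since $\Omega\subset\R^2$, the Ladyzhenskaya inequalities \eqref{eq:lad1}--\eqref{eq:lad2} yield the embeddings $V\hookrightarrow L^4(\Omega)^2$ and $W\hookrightarrow L^4(\Omega)^{2\times2}$, so these estimates show in particular that $\b$ is bounded — hence, being multilinear, continuous — on $V\times V\times V$ and $\BB$ on $V\times W\times W$.

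The antisymmetry is the only point requiring a genuine argument. For $\u\in C^\infty_{0,\text{div}}(\Omega)$ and smooth $\vv,\w$ I would integrate by parts in $x_i$:
\[
\b(\u,\vv,\w)=\int_\Omega\sum_{i,j}u_i(\p_iv_j)w_j\dx=-\int_\Omega(\di{\u})(\vv\cdot\w)\dx-\int_\Omega\sum_{i,j}u_iv_j(\p_iw_j)\dx=-\b(\u,\w,\vv),
\]
where the boundary term disappears because $\u$ has compact support and the divergence term vanishes because $\di{\u}=0$. By density of $C^\infty_{0,\text{div}}(\Omega)$ in $V$ and of $C^\infty(\overline{\Omega})$ in $H^1(\Omega)$, together with the continuity just established, this identity extends to all $\u\in V$, $\vv,\w\in H^1(\Omega)^2$, and the componentwise version gives $\BB(\u,\CC,\DD)=-\BB(\u,\DD,\CC)$. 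Taking $\w=\vv$ gives $\b(\u,\vv,\vv)=0$, hence $\b(\u,\u,\vv)=-\b(\u,\vv,\u)$; and combining the antisymmetry with the H\"older bound above gives $\snorm*{\b(\u,\w,\vv)}=\snorm*{\b(\u,\vv,\w)}\le c\norm*{\u}_4\norm*{\vv}_V\norm*{\w}_4$, and analogously for $\BB$.

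I do not expect any real obstacle in this argument; the only subtlety is that the integration by parts must be carried out first on smooth functions and only then transferred by density, since for merely $H^1$ arguments in two dimensions one cannot differentiate the triple product directly. The constant $c$ in the final estimates is inherited from the embedding constants in \eqref{eq:lad1}--\eqref{eq:lad2}.
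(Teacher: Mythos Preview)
Your argument is correct and is exactly the standard proof one finds in the cited references; the paper itself does not prove this proposition but merely quotes it from \cite{Temam.1997} and \cite{LukacovaMedvidova.2015}. There is nothing to compare against.
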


\begin{Lemma}[Korn inequality, \cite{Horgan.1995}]
	Let $\Omega$ be an open and regular set of $\mathbb{R}^2$ and $\u\in H^1_0(\Omega)$ a vector field on $\Omega$, then we have
	\begin{equation*}
	\norm*{\na\u}_2 \leq \sqrt{2}\norm*{\Du}_2,
	\end{equation*}
	with the equality if $\div{\u}=0$. Suppose $\eta$ is a continuous function satisfying (\ref{eq:coeff1}) then the inequality
	\begin{equation*}
	\norm*{\sqrt{\eta}\na\u}_2\leq \sqrt{2}\norm*{\sqrt{\eta}\Du}_2
	\end{equation*}
	holds. Here $\Du$ denotes the deformation tensor which is the symmetric part of $\na\u$, i.e.
	\begin{equation*}
	\Du = \frac{1}{2}\(\na\u + \na\u^T\).
	\end{equation*}
\end{Lemma}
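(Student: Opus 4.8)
The plan is to reduce both inequalities to a single pointwise algebraic identity and to separate the two cases only at the point where a cross term has to be integrated. Writing $\Du=\frac{1}{2}\big(\na\u+(\na\u)^\top\big)$ and expanding the Frobenius norm pointwise gives
\begin{equation*}
2\snorm*{\Du}^2=\snorm*{\na\u}^2+\na\u:(\na\u)^\top ,
\end{equation*}
because the double contraction of $\na\u$ with its transpose is exactly the mixed term $\sum_{i,j}\p_j u_i\,\p_i u_j$. Everything then depends on the sign, after integration, of the cross term $\na\u:(\na\u)^\top$.

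For the unweighted bound I would integrate this identity over $\Omega$ and evaluate the cross term by two integrations by parts, legitimate for $\u\in H^1_0(\Omega)$ by density of $C_c^\infty(\Omega)$. Moving one derivative and commuting the partials yields $\int_\Omega\na\u:(\na\u)^\top\dx=\int_\Omega(\div{\u})^2\dx$; equivalently, the Jacobian $\det\na\u$ is a null Lagrangian and integrates to zero on $H^1_0(\Omega)$. Hence $2\norm*{\Du}_2^2=\norm*{\na\u}_2^2+\norm*{\div{\u}}_2^2\ge\norm*{\na\u}_2^2$, which is the claimed bound, and equality forces $\norm*{\div{\u}}_2=0$, that is $\div{\u}=0$, giving the stated equality case.

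For the weighted bound I would instead multiply the pointwise identity by $\eta\ge0$ before integrating, obtaining
\begin{equation*}
2\int_\Omega\eta\,\snorm*{\Du}^2\dx=\int_\Omega\eta\,\snorm*{\na\u}^2\dx+\int_\Omega\eta\,\na\u:(\na\u)^\top\dx ,
\end{equation*}
so that $\norm*{\sqrt{\eta}\,\na\u}_2\le\sqrt2\,\norm*{\sqrt{\eta}\,\Du}_2$ is precisely the nonnegativity of the last integral. Using $\na\u:(\na\u)^\top=(\div{\u})^2-2\det\na\u$ pointwise, this reduces to $\int_\Omega\eta\,(\div{\u})^2\dx\ge 2\int_\Omega\eta\,\det\na\u\dx$; for the divergence-free velocities that enter the energy estimate the first term vanishes and the whole matter collapses to the sign of the weighted Jacobian integral $\int_\Omega\eta\,\det\na\u\dx$.

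That weighted Jacobian integral is the step I expect to be the main obstacle. When $\eta$ is constant the null-Lagrangian structure $\det\na\u=\div{u_1\p_2 u_2,\,-u_1\p_1 u_2}$ makes $\int_\Omega\eta\,\det\na\u\dx$ vanish and the constant $\sqrt2$ is immediate; for non-constant $\eta$ an integration by parts against this divergence form releases a term carrying $\na\eta$ that does not cancel and that must first be given a meaning, since $\eta$ is only assumed continuous. I would therefore concentrate the work on this single term: approximate $\eta$ by smooth weights, use the divergence representation of $\det\na\u$ together with the uniform bounds \eqref{eq:coeff1} to estimate the $\na\eta$-contribution, and pass to the limit, as this is the only place where the passage from the unweighted to the weighted inequality is not automatic.
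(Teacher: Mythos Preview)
The paper does not prove this lemma at all; it merely states it with a citation to Horgan. So there is no ``paper's own proof'' to compare against, and your task is really to supply one.

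Your treatment of the unweighted inequality is correct and standard: the pointwise identity $2\snorm*{\Du}^2=\snorm*{\na\u}^2+\na\u:(\na\u)^\top$ together with the integration-by-parts identity $\int_\Omega\na\u:(\na\u)^\top\dx=\int_\Omega(\di\u)^2\dx$ for $\u\in H^1_0(\Omega)$ gives $2\norm*{\Du}_2^2=\norm*{\na\u}_2^2+\norm*{\di\u}_2^2$, hence the bound with constant $\sqrt2$ and equality precisely when $\di\u=0$.

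For the weighted inequality, however, you have correctly located a genuine obstruction, and your proposed cure (mollifying $\eta$ and controlling the $\na\eta$ term) cannot work, because the difficulty is not one of regularity. The weighted statement with the \emph{sharp} constant $\sqrt2$ is in fact false for general continuous $\eta$ satisfying \eqref{eq:coeff1}. A localized rotation $\u=\psi(r)(-y,x)$, which is divergence-free and lies in $H^1_0$ on a disk, has $\Du=0$ and $\snorm*{\na\u}^2=2$ at the origin; concentrating $\eta$ near that point (while keeping $\eta\ge\eta_1>0$ elsewhere) makes $\int_\Omega\eta\snorm*{\na\u}^2\dx$ strictly larger than $2\int_\Omega\eta\snorm*{\Du}^2\dx$. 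Equivalently, the weighted Jacobian integral $\int_\Omega\eta\,\det\na\u\dx$ has no definite sign, so the reduction you set up cannot be closed. What the paper actually \emph{uses} (see the passage from \eqref{eq:energy_chnsp_galerkin_full} to \eqref{eq:energy_chnsp_galerkin}) is only the cruder bound
\[
\int_\Omega\eta(\phi)\snorm*{\Du}^2\dx\ \ge\ \eta_1\norm*{\Du}_2^2\ \ge\ \frac{\eta_1}{2}\norm*{\na\u}_2^2,
\]
which follows immediately from $\eta\ge\eta_1$ and the unweighted Korn inequality you already proved. I would therefore drop the attempt to salvage the sharp weighted constant and instead record this weaker (but sufficient) estimate.
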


For the limiting process we will need the following lemma which is a consequence of the Vitali theorem.
\begin{Lemma}[Vitali theorem \cite{Folland.2011}]
	\label{lema:genlebe}
	Let $M\subset\R^d$ be a measurable and bounded set. Let the sequence $\{f_m\}_{m\in\mathbb{N}}$ be uniformly bounded in $L^q(M)$ for $q > 1$. Finally, let $f_m \to f$ a.e. in $M$ for some $f\in L^q(M)$. Then
	\begin{equation*}
	\int_M f_m \to \int_M f.
	\end{equation*}
\end{Lemma}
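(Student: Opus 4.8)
The plan is to deduce the statement from the classical Vitali convergence theorem, whose two hypotheses—finite measure of the domain and equi-integrability (uniform absolute continuity of the integrals) of the sequence—are easily checked here. Since $M$ is bounded we have $|M| < \infty$. For equi-integrability, let $q' = q/(q-1)$ be the conjugate exponent, which is \emph{finite} precisely because $q>1$, and set $C := \sup_m \norm*{f_m}_{L^q(M)} < \infty$. For any measurable $E \subseteq M$, Hölder's inequality gives
\[
\int_E \snorm*{f_m} \dx \le \norm*{f_m}_{L^q(M)} \snorm*{E}^{1/q'} \le C \snorm*{E}^{1/q'},
\]
uniformly in $m$, so $\sup_m \int_E \snorm*{f_m}\dx \to 0$ as $\snorm*{E} \to 0$. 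The same estimate applied to $f$ (which lies in $L^q(M) \subseteq L^1(M)$) shows $\int_E \snorm*{f}\dx \to 0$ as $\snorm*{E}\to 0$ as well.

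Next I would combine this with the a.e.\ convergence $f_m \to f$ via Egorov's theorem. Given $\varepsilon > 0$, choose $\delta > 0$ so small that $\sup_m \int_E \snorm*{f_m}\dx \le \varepsilon$ and $\int_E \snorm*{f}\dx \le \varepsilon$ whenever $\snorm*{E} \le \delta$. By Egorov there is a measurable set $A \subseteq M$ with $\snorm*{M \setminus A} \le \delta$ on which $f_m \to f$ uniformly. Splitting the integral,
\[
\left| \int_M f_m \dx - \int_M f \dx \right| \le \int_A \snorm*{f_m - f} \dx + \int_{M\setminus A} \snorm*{f_m} \dx + \int_{M\setminus A} \snorm*{f} \dx \le \snorm*{M}\,\norm*{f_m - f}_{L^\infty(A)} + 2\varepsilon .
\]
Letting $m \to \infty$, the first term vanishes since $f_m\to f$ uniformly on $A$ and $\snorm*{M}<\infty$; hence $\limsup_{m} \left| \int_M f_m\dx - \int_M f\dx \right| \le 2\varepsilon$, and since $\varepsilon>0$ was arbitrary the conclusion follows. (Equivalently, the same bound shows $f_m \to f$ in $L^1(M)$, which is the usual formulation of Vitali's theorem.)

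There is no real obstacle in this argument; the one point that must be handled with care is that the strict inequality $q>1$ is genuinely needed. It enters exactly through the finiteness of the conjugate exponent $q'$ in the Hölder step, which is what converts the uniform $L^q$-bound into the uniform smallness of $\int_E \snorm*{f_m}\dx$. For $q=1$ the statement is false in general (mass can concentrate), so equi-integrability cannot be replaced by a mere $L^1$-bound.
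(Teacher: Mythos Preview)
Your proof is correct. The paper does not actually prove this lemma; it merely states it with a citation to Folland's textbook \cite{Folland.2011}, so there is nothing to compare against. Your argument---uniform $L^q$-bound plus H\"older to obtain equi-integrability, then Egorov to split the domain---is the standard route to this consequence of Vitali's theorem, and your remark that $q>1$ is essential (via the finiteness of the conjugate exponent) is exactly the right observation.
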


\begin{Lemma}[\cite{Boyer.1999}]
	\label{lem:hreg}
	If $\phi\in H^{s+2}(\Omega)$, then $\norm{\phi - (\phi)_\Omega}_{H^{s+2}} \leq C\norm{\Delta\phi}_{H^s}$. Here $(\phi)_\Omega := \frac{1}{\snorm*{\Omega}}\int_\Omega \phi \dx$ denotes the mean value.
\end{Lemma}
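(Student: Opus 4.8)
The plan is to read this as the standard a priori $H^{s+2}$-estimate for the Neumann Laplacian and to establish it in three steps: a reduction to a mean-free datum, an $L^2$-bound via Poincar\'e, and an elliptic-regularity bootstrap to the full $H^{s+2}$-norm.

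First I would set $\psi := \phi - (\phi)_\Omega$. Since $\Delta$ annihilates constants, $\Delta\psi = \Delta\phi$ while $(\psi)_\Omega = 0$, so it suffices to show $\norm*{\psi}_{H^{s+2}} \leq C\norm*{\Delta\psi}_{H^s}$ for mean-free $\psi$. Here I would use that the phase variable carries the homogeneous Neumann condition $\restr{\partial_n\psi}{\partial\Omega}=0$ that is in force throughout the paper (the estimate is false without a boundary condition, as harmonic non-constant functions show), and that $\partial\Omega$ is regular enough for the elliptic theory below. In particular $\int_\Omega\Delta\psi = \int_{\partial\Omega}\partial_n\psi = 0$, so $f := \Delta\psi$ is itself mean-free and $\psi$ is the unique mean-free solution of the Neumann problem $-\Delta\psi = -f$, $\restr{\partial_n\psi}{\partial\Omega}=0$.

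Next I would bound $\norm*{\psi}_2$. Testing the equation with $\psi$ and integrating by parts gives
\begin{equation*}
\norm*{\na\psi}_2^2 = -\int_\Omega \psi\,\Delta\psi\dx \leq \norm*{\psi}_2\norm*{\Delta\psi}_2 \leq C_P\,\norm*{\na\psi}_2\norm*{\Delta\psi}_2,
\end{equation*}
using the Poincar\'e--Wirtinger inequality $\norm*{\psi}_2 \leq C_P\norm*{\na\psi}_2$ for mean-free functions. Dividing yields $\norm*{\na\psi}_2 \leq C_P\norm*{\Delta\psi}_2$, and one more application of Poincar\'e--Wirtinger gives $\norm*{\psi}_2 \leq C_P^2\norm*{\Delta\psi}_2 \leq C\norm*{\Delta\psi}_{H^s}$ (for $s\geq 0$; for negative $s$ one argues by duality instead).

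Finally I would invoke the classical elliptic regularity estimate for the Neumann problem on a sufficiently regular bounded domain (Agmon--Douglis--Nirenberg; see also \cite{Boyer.1999}): $\norm*{\psi}_{H^{s+2}} \leq C\big(\norm*{\Delta\psi}_{H^s} + \norm*{\psi}_2\big)$. Plugging in the $L^2$-bound absorbs the lower-order term and produces $\norm*{\psi}_{H^{s+2}} \leq C\norm*{\Delta\psi}_{H^s} = C\norm*{\Delta\phi}_{H^s}$, which is the claim. The only non-routine point is the regularity input: on a merely Lipschitz domain the full two-derivative gain is not available, so one must use that $\Omega$ is smooth (or convex, when $s=0$), and for non-integer $s$ deduce the estimate by interpolating the integer-order ones. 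The interplay of the potential-theoretic $L^2$-bound with the elliptic gain is exactly what makes $C$ independent of $\phi$, which is the content of the lemma.
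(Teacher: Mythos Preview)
The paper does not supply its own proof of this lemma; it is stated with a citation to \cite{Boyer.1999} and used without argument. Your proof is the standard route---reduce to the mean-free part, control $\norm*{\psi}_2$ by Poincar\'e--Wirtinger, then invoke elliptic regularity for the Neumann Laplacian to gain two derivatives---and it is correct. The one caveat you raise is on point: the paper only assumes $\partial\Omega$ is Lipschitz, whereas the Agmon--Douglis--Nirenberg step requires more regularity (smooth, or convex for the $s=0$ case actually used around \eqref{eq:h2reg}); this is a tacit hypothesis in the paper rather than a defect in your argument.
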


\begin{Lemma}[\cite{Barrett.2018}]
	\label{lem:matinv}
	Let  $\DD\in H^2(\Omega)^{m\times m}\cap C^1(\overline{\Omega})^{m\times m},m\in\mathbb{N}$, be a symmetric matrix function, which is uniformly positive definite on $\overline{\Omega}$ and satisfies homogeneous Neumann boundary conditions, then
	\begin{equation}
	\label{eq:invconf}
	\int_\Omega \Delta\DD:\DD^{-1} \dx = - \int_\Omega \nabla\DD:\nabla\DD^{-1} \dx \geq \frac{1}{m}\int_\Omega \snorm*{\nabla\tr{\log(\DD)}}^2 \dx
	\end{equation}
\end{Lemma}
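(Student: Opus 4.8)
The plan is to separate the statement into a functional-analytic step, which legitimises the integration by parts giving the equality $\int_\Omega\Delta\DD:\DD^{-1}\dx=-\int_\Omega\na\DD:\na\DD^{-1}\dx$, and a pointwise matrix inequality, which yields the lower bound by $\frac1m\int_\Omega\snorm*{\na\tr{\log\DD}}^2\dx$.

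First I would record the regularity of $\DD^{-1}$. Since $\DD$ is continuous and uniformly positive definite on the compact set $\overline\Omega$, its eigenvalues are bounded below by a positive constant, so $\DD^{-1}$ is well defined and bounded; because matrix inversion is smooth on the open cone of positive definite matrices and $\DD\in C^1(\overline\Omega)$, we get $\DD^{-1}\in C^1(\overline\Omega)$ with
\[
\p_k\big(\DD^{-1}\big)=-\DD^{-1}(\p_k\DD)\DD^{-1},
\]
obtained by differentiating $\DD\DD^{-1}=\I$. Together with $\DD\in H^2(\Omega)$ this gives $\DD^{-1}\in H^1(\Omega)^{m\times m}$, and also $\tr{\log\DD}=\log(\det\DD)\in H^1(\Omega)$. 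Green's formula on the Lipschitz domain $\Omega$ then applies to the pair $(\DD,\DD^{-1})\in H^2\times H^1$, and since $\p_n\DD=\mathbf 0$ on $\p\Omega$ the boundary term vanishes, which is the first equality in the claim. Inserting the identity for $\p_k(\DD^{-1})$ and using symmetry of $\DD$, $\DD^{-1}$, $\p_k\DD$ together with cyclicity of the trace, one rewrites, with the sum over the spatial directions $k$,
\[
-\int_\Omega \na\DD:\na\DD^{-1}\dx=\int_\Omega\sum_{k}\tr{(\p_k\DD)\DD^{-1}(\p_k\DD)\DD^{-1}}\dx .
\]

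The second step is the pointwise estimate at a fixed $x\in\Omega$. On symmetric $m\times m$ matrices the bilinear form $\inp{X,Y}:=\tr{X\DD^{-1}Y\DD^{-1}}$ is an inner product, because $\DD^{-1}$ is positive definite; one checks $\inp{\DD,\DD}=\tr{\I}=m$ and $\inp{\p_k\DD,\DD}=\tr{(\p_k\DD)\DD^{-1}}$. Cauchy--Schwarz for $\inp{\cdot,\cdot}$ gives, for each $k$,
\[
\big(\tr{\DD^{-1}\p_k\DD}\big)^2=\inp{\p_k\DD,\DD}^2\le\inp{\p_k\DD,\p_k\DD}\,\inp{\DD,\DD}=m\,\tr{(\p_k\DD)\DD^{-1}(\p_k\DD)\DD^{-1}} ;
\]
equivalently, putting $\mathbf A_k:=\DD^{-1/2}(\p_k\DD)\DD^{-1/2}$ (symmetric), this reads $(\tr{\mathbf A_k})^2\le m\,\tr{\mathbf A_k^2}$, i.e. $(\sum_i\lambda_i)^2\le m\sum_i\lambda_i^2$ for the eigenvalues of $\mathbf A_k$. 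Finally Jacobi's formula gives $\p_k\tr{\log\DD}=\p_k\log(\det\DD)=\tr{\DD^{-1}\p_k\DD}$, so summing over $k$ and integrating the previous inequality over $\Omega$ yields
\[
\int_\Omega\sum_k\tr{(\p_k\DD)\DD^{-1}(\p_k\DD)\DD^{-1}}\dx\ \ge\ \frac1m\int_\Omega\snorm*{\na\tr{\log\DD}}^2\dx ,
\]
and chaining the displayed identities and inequality gives the assertion.

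The only delicate point is the first step: one must verify $\DD^{-1}\in H^1$ and that Green's formula is valid on the merely Lipschitz domain $\Omega$ — this is precisely where the hypotheses $\DD\in H^2\cap C^1(\overline\Omega)$, uniform positive definiteness, and the homogeneous Neumann condition are all used. The matrix algebra in the second step is elementary once the identity for $\p_k(\DD^{-1})$ and Jacobi's formula are in hand.
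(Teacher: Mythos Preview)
Your proof is correct. Note, however, that the paper does not supply its own proof of this lemma: it is quoted from \cite{Barrett.2018} without argument, so there is nothing in the paper to compare against. Your argument --- integration by parts using the Neumann condition, the identity $\p_k(\DD^{-1})=-\DD^{-1}(\p_k\DD)\DD^{-1}$, and then the Cauchy--Schwarz inequality for the inner product $\inp{X,Y}=\tr{X\DD^{-1}Y\DD^{-1}}$ combined with Jacobi's formula --- is exactly the standard route and matches what one finds in the cited reference.
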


\section{Weak solution}
In this section we will introduce the weak solutions to our viscoelastic phase separation model (\ref{eq:full_model}) and derive the corresponding energy estimates.

\begin{Definition}
	\label{defn:weak_sol_full}
	Let the initial data $\left(\phi_0,q_0,\u_0,\CC_0 \right) \in H^1(\Omega)\times L^2(\Omega) \times H \times L^2(\Omega)^{2\times2}$. The quintuple $(\phi,q,\mu,\u,\CC)$ is called a corresponding weak solution of (\ref{eq:full_model}), if
	\begin{align*}
	&\phi \in L^{\infty}(0,T;H^1(\Omega))\cap L^2(0,T;H^2(\Omega)),& & q\in L^\infty(0,T;L^2(\Omega))\cap L^2(0,T;H^1(\Omega)),& & \mu\in L^2(0,T;H^1(\Omega)), \\
	&\u\in L^{\infty}(0,T;L^2(\Omega))\cap L^2(0,T;V),& &\CC\in L^{\infty}(0,T;L^2(\Omega))\cap L^2(0,T;W),
	\end{align*}
	\vspace{-2em}
	\begin{align*}
	&\frac{\partial\phi}{\partial t} \in L^{2}(0,T;H^{-1}(\Omega)),& & \frac{\partial q}{\partial t}\in L^{4/3}(0,T;H^{-1}(\Omega)),& \frac{\partial\u}{\partial t}\in L^2(0,T;V^\star),& &\frac{\partial\CC}{\partial t}\in L^{4/3}(0,T;W^\star),
	\end{align*}
	and
	\begin{align}
	&\int_\Omega\frac{\partial \phi}{\partial t}\psi \dx + \int_\Omega\u\cdot\nabla\phi\psi \dx + \int_\Omega m(\phi)\nabla\mu\nabla\psi  \dx - \kappa\int_\Omega n(\phi)\nabla(A(\phi)q)\na\psi \dx &= 0 \nonumber\\
	&\int_\Omega\frac{\partial q   }{\partial t}\zeta \dx + \int_\Omega\u\cdot\nabla q\zeta \dx + \int_\Omega\frac{q\zeta}{\tau(\phi)}\dx + \int_\Omega\nabla(A(\phi)q)\na(A(\phi)\zeta)\dx - \kappa\int_\Omega n(\phi)\nabla\mu\na(A(\phi)\zeta) \dx&= 0 \nonumber\\
	&\int_\Omega\mu\xi \dx - c_0\int_\Omega\nabla\phi\nabla\xi \dx - \int_\Omega f(\phi)\xi \dx &= 0 \nonumber\\
	&\int_\Omega\frac{\partial\u   }{\partial t}\cdot\vv \dx + \int_\Omega(\u\cdot\nabla)\u\cdot\vv \dx - \int_\Omega\eta(\phi)\Du:\mathrm{D}\vv \dx + \int_\Omega\TT:\nabla\vv \dx + \int_\Omega\phi\nabla\mu\cdot\vv \dx &= 0 \nonumber\\
	&\int_\Omega\frac{\partial\CC  }{\partial t}:\DD \dx + \int_\Omega(\u\cdot\nabla)\CC:\DD \dx - \int_\Omega\left[(\nabla\u)\CC + \CC(\nabla\u)^T\right]:\DD \dx + \varepsilon\int_\Omega\nabla\CC:\nabla\DD \dx & \nonumber\\
	&=  - \int_\Omega h(\phi)\tr{\CC}^2\CC:\DD \dx + \int_\Omega h(\phi)\tr{\CC}\I:\DD \dx, & \label{eq:weak_sol_sys}
	\end{align}
	 for any test function $(\psi,\zeta,\xi,\vv,\DD)\in H^1(\Omega)^3\times V \times W$ and almost every $t\in(0,T).$ For the initial data it holds that $\left(\phi(0),q(0),\u(0),\CC(0) \right) = \left(\phi_0,q_0,\u_0,\CC_0 \right)$.
\end{Definition}

%----------------------------------------------------------------------------------------------------------------------------------------------------------------

\begin{Theorem} \label{theo:energy_diss}
	Let $(\phi,q,\mu,\u,p,\CC)$ be a smooth solution of (\ref{eq:full_model}) and let the initial datum $\CC_0$ be a symmetric positive definite matrix. Further assume that $\CC$ is uniformly positive definite. Then the free energy given by (\ref{eq:free_energy}) satisfies
	\begin{align*}
	&\td E(\phi,q,\u,\CC) +  \int_\Omega \eta(\phi)\snorm*{\Du}^2\dx  +  (1-\kappa)\int_\Omega m(\phi)|\na\mu|^2  + (1-\kappa)\int_\Omega|\na(A(\phi)q))|^2\dx +\int_\Omega \frac{q^2}{\tau(\phi)} \dx \\
	&\hspace{0.6cm}+ \frac{\varepsilon}{2}\int_\Omega \snorm*{\na\tr{\CC}}^2\dx + \frac{1}{2} \int_\Omega \tr{\TT}\tr{\TT + \TT^{-1} - 2\I} \dx + \frac{\varepsilon}{4}\int_\Omega \snorm*{\nabla\tr{\ln(\CC)}}^2 \dx \leq 0,
	\end{align*}
	thus the energy is non-increasing in time.
\end{Theorem}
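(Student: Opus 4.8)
The plan is the standard energy method: test each equation in \eqref{eq:full_model} against the variational derivative of the total energy $E$ with respect to the corresponding unknown, add the resulting identities, and verify that all coupling terms cancel pairwise, leaving only signed dissipative contributions.

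\emph{Phase field and bulk stress.} First I would test the $\phi$-equation with $\psi=\mu$ while using the chemical-potential relation $\mu=-c_0\Delta\phi+f(\phi)$ with $\xi=\partial_t\phi$; since the solution is smooth and $\partial_n\phi=0$, $\int_\Omega\partial_t\phi\,\mu\dx=\td\!\int_\Omega(\tfrac{c_0}{2}\snorm{\na\phi}^2+F(\phi))\dx=\td E_{mix}$, leaving a transport term $\int_\Omega(\u\cdot\na\phi)\mu\dx$, the dissipation $\int_\Omega m(\phi)\snorm{\na\mu}^2\dx$, and one cross-diffusion term. Then I would test the $q$-equation with $\zeta=q$: the convective term drops by $\di\u=0$, integration by parts (using $\partial_n(A(\phi)q)=0$, a consequence of the Neumann data) turns $A(\phi)\Delta(A(\phi)q)$ into $-\int_\Omega\snorm{\na(A(\phi)q)}^2\dx$, the relaxation term gives $\int_\Omega q^2/\tau(\phi)\dx$, and a second cross term appears. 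Adding the two identities produces $\td(E_{mix}+E_{bulk})$ together with $\int_\Omega m(\phi)\snorm{\na\mu}^2+\int_\Omega\snorm{\na(A(\phi)q)}^2+\int_\Omega q^2/\tau(\phi)$, the transport term, and the combined cross-diffusion term $-2\kappa\int_\Omega n(\phi)\na\mu\cdot\na(A(\phi)q)\dx$.

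\emph{Momentum and conformation tensor.} Testing the momentum equation with $\vv=\u$ kills the inertial term ($\b(\u,\u,\u)=0$) and the pressure (incompressibility), yields the viscous dissipation $\int_\Omega\eta(\phi)\snorm{\Du}^2\dx$, the elastic stress term $\int_\Omega\TT:\na\u\dx$, and the capillary term $-\int_\Omega\mu(\u\cdot\na\phi)\dx$, the last cancelling the transport term from the phase-field block after an integration by parts and $\di\u=0$. For the $\CC$-equation I would test with $\DD=\tfrac12(\trC\,\I-\CC^{-1})$, the $L^2$-gradient of $E_{el}$, which is admissible because $\CC$ is assumed uniformly symmetric positive definite: then $\int_\Omega\partial_t\CC:\DD\dx=\td E_{el}$ (using $\td\ln\det\CC=\CC^{-1}:\partial_t\CC$); the advection term vanishes since $(\u\cdot\na)\CC:\trC\,\I$ and $(\u\cdot\na)\CC:\CC^{-1}$ both reduce to $\u\cdot\na$ of a scalar and integrate to zero; the upper-convected term satisfies $\int_\Omega[(\na\u)\CC+\CC(\na\u)^\top]:\DD\dx=2\int_\Omega\na\u:(\DD\CC)\dx=\int_\Omega\TT:\na\u\dx-\int_\Omega\di\u\dx=\int_\Omega\TT:\na\u\dx$, which cancels the elastic stress term from the momentum balance; the $\varepsilon\Delta\CC$ term contributes $\tfrac{\varepsilon}{2}\int_\Omega\snorm{\na\trC}^2\dx-\tfrac{\varepsilon}{2}\int_\Omega\na\CC:\na\CC^{-1}\dx$, and Lemma~\ref{lem:matinv} with $m=2$ bounds $-\int_\Omega\na\CC:\na\CC^{-1}\dx$ from below by $\tfrac12\int_\Omega\snorm{\na\tr{\ln(\CC)}}^2\dx$; finally, with $\bigchi,\Phi$ as in \eqref{eq:conf_assump}, the Peterlin source and relaxation terms combine into $\tfrac12\int_\Omega h(\phi)\tr{\TT}\tr{\TT+\TT^{-1}-2\I}\dx$, nonnegative because $\TT=\trC\,\CC$ is SPD, so $\TT+\TT^{-1}-2\I$ is positive semidefinite while $\tr{\TT}>0$.

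\emph{Conclusion.} Summing the three blocks, every coupling term cancels, leaving the dissipative terms above on the left and $2\kappa\int_\Omega n(\phi)\na\mu\cdot\na(A(\phi)q)\dx$ on the right; I would close by Young's inequality and the mobility bounds, $2\kappa\int_\Omega n(\phi)\na\mu\cdot\na(A(\phi)q)\dx\le\kappa\int_\Omega m(\phi)\snorm{\na\mu}^2\dx+\kappa\int_\Omega\snorm{\na(A(\phi)q)}^2\dx$, and absorb it using $\kappa<1$, which yields the $(1-\kappa)$ factors and hence the asserted inequality; in particular $E$ is non-increasing. I expect the conformation-tensor block to be the main obstacle: one must rigorously differentiate $\int_\Omega\tfrac14\tr{\trC\,\CC-2\ln\CC-\I}\dx$ along the flow and handle $\CC^{-1}$ and $\ln\CC$, which is exactly where the hypothesis that $\CC$ (and $\CC_0$) is uniformly positive definite is used and where Lemma~\ref{lem:matinv} is indispensable; a secondary difficulty is the careful bookkeeping of the cross-diffusion terms together with the smallness $\kappa<1$, since the energy method is generally ineffective for cross-diffusion systems.
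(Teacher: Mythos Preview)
Your proposal is correct and follows essentially the same route as the paper's proof: you test $(\ref{eq:full_model})_{1,2,3,4}$ with $\mu$, $q$, $\u$, and $\tfrac12(\trC\,\I-\CC^{-1})$ respectively, exploit the pairwise cancellations of the transport and elastic-stress coupling terms, invoke Lemma~\ref{lem:matinv} for the diffusive $\CC^{-1}$-contribution, and absorb the cross-diffusion term by Young's inequality to produce the $(1-\kappa)$ factors. The only cosmetic difference is that the paper computes the time derivatives of $\int\tfrac14\tr{\TT}$ and $\int\tfrac12\tr{\ln\CC}$ separately before combining them, whereas you test once with the full variational derivative of $E_{el}$.
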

\begin{proof}
	We will differentiate every term of (\ref{eq:full_model}) with respect to time and show the energy dissipation
	\begin{align*}
	&\td\int_\Omega\frac{c_0}{2}\snorm*{\na\phi}^2 + F(\phi) \dx = \int_\Omega c_0\frac{\partial\nabla\phi}{\partial t}\cdot\nabla\phi + F^\prime(\phi)\frac{\partial\phi}{\partial t} \dx = \int_\Omega -c_0\frac{\partial\phi}{\partial t}\Delta\phi + f(\phi)\frac{\partial\phi}{\partial t}\dx =  \int_\Omega \frac{\partial\phi}{\partial t}\mu \dx \\
	&= -\int_\Omega \u\cdot\na\phi\mu \dx + \int_\Omega \dib{m(\phi)\na\mu}\mu \dx - \kappa\int_\Omega \dib{n(\phi)\na(A(\phi)q)}\mu \dx, \\
	&= -\int_\Omega \u\cdot\na\phi\mu \dx - \int_\Omega m(\phi)\snorm*{\na\mu}^2 \dx + \kappa\int_\Omega n(\phi)\na(A(\phi)q)\na\mu \dx, \\
	\\
	&\td\int_\Omega\frac{1}{2}\snorm*{q}^2 \dx = \int_\Omega \frac{\partial q}{\partial t}q \dx \\
	&=-\int_\Omega \frac{1}{\tau(\phi)}q^2 \dx + \int_\Omega \dib{\na(A(\phi)q)}A(\phi)q \dx - \kappa\int_\Omega \dib{n(\phi)\na\mu}A(\phi)q \dx\\
	&=-\int_\Omega \frac{1}{\tau(\phi)}q^2 \dx - \int_\Omega \snorm*{\na(A(\phi)q)}^2 \dx + \kappa\int_\Omega n(\phi)\na\mu\na(A(\phi)q) \dx, \\
	&\\
	&\td\int_\Omega \frac{1}{2}\snorm*{\u}^2 \dx = \int_\Omega \frac{\partial \u}{\partial t}\cdot\u \dx \\
	&= \int_\Omega \div{\eta(\phi)\Du}\cdot\u\dx + \int_\Omega \mathrm{div}(\TT)\cdot\u\dx - \int_\Omega \phi\na\mu\cdot\u\dx \\
	&= - \int_\Omega\eta(\phi)\snorm*{\Du}^2\dx - \int_\Omega \TT:\na\u\dx - \int_\Omega \phi\na\mu\cdot\u\dx, \\
	\\
	&\td\int_\Omega\frac{1}{4}\tr{\TT} \dx = \int_\Omega \frac{1}{2}\frac{\partial \tr{\CC}}{\partial t}\tr{\CC}\dx \\
	&=\int_\Omega \tr{\CC}\CC:\na\u\dx - \frac{1}{2}\int_\Omega h(\phi)\tr{\CC}^{4} \dx + \frac{d}{2}\int_\Omega  h(\phi)\tr{\CC}^2 \dx + \frac{\varepsilon}{2}\int_\Omega \Delta\tr{\CC}\cdot\tr{\CC} \dx \\
	&= \int_\Omega\TT:\na\u\dx - \frac{1}{2}\int_\Omega  h(\phi)\tr{\CC}^{4} \dx + \frac{d}{2}\int_\Omega  h(\phi)\tr{\CC}^2 \dx - \frac{\varepsilon}{2}\int_\Omega \snorm*{\na\tr{\CC}}^2 \dx,\\
	\\
	&\td\int_\Omega\frac{1}{2}\tr{\ln(\CC)} \dx = \int_\Omega \frac{1}{2}\frac{\partial \CC}{\partial t}:\CC^{-1}\dx \\
	&=\int_\Omega \tr{\na\u}\dx -  \frac{1}{2}\int_\Omega h(\phi)\tr{\CC}^2\dx + \frac{d}{2}\int_\Omega h(\phi)\tr{\CC}\tr{\CC^{-1}}\dx + \frac{\varepsilon}{2}\int_\Omega \Delta\CC:\CC^{-1}\dx \\
	&= -\frac{1}{2}\int_\Omega h(\phi)\tr{\CC}^2\dx + \frac{d}{2}\int_\Omega h(\phi)\tr{\CC}\tr{\CC^{-1}}\dx - \frac{\varepsilon}{2}\int_\Omega \na\CC:\na\CC^{-1}\dx.
	\end{align*}
	
	Summing up the above equations  we obtain
	\begin{align*}
	\td E(\u,\CC,\phi,q) =& - \int_\Omega \eta(\phi)\snorm*{\Du}^2\dx -\int_\Omega \frac{1}{\tau(\phi)}q^2 \dx - \int_\Omega m(\phi)\snorm*{\na\mu}^2\dx -  \int_\Omega \snorm*{\na(A(\phi)q)}^2\dx \\
	& \hspace{-2cm}+ 2\kappa\int_\Omega n(\phi)\na\mu\na(A(\phi)q)\dx - \frac{\varepsilon}{2} \int_\Omega \snorm*{\na\tr{\CC}}^2\dx + \frac{\varepsilon}{2} \int_\Omega \na\CC:\na\CC^{-1}\dx \\
	&\hspace{-2cm}-\frac{1}{2} \int_\Omega h(\phi)\tr{\CC}^{4}\dx + \frac{1}{2}\int_\Omega h(\phi)\tr{\CC}\tr{\CC^{-1}}\dx - \int_\Omega 2h(\phi)\tr{\CC}^{2}\dx \\
	&\hspace{-2.5cm}\leq -\frac{1}{2}\int_\Omega \eta(\phi)\snorm*{\na\u}^2\dx  -\int_\Omega \frac{1}{\tau(\phi)}q^2 \dx - (1-\kappa)\int_\Omega m(\phi)|\na\mu|^2\dx - (1-\kappa)\int_\Omega|\na(A(\phi)q))|^2\dx \\
	& \hspace{-2cm} -\frac{1}{2}\int_\Omega h(\phi)\tr{\TT}\tr{\TT + \TT^{-1} - 2\I}\dx - \frac{\varepsilon}{2} \int_\Omega \snorm*{\na\tr{\CC}}^2\dx - \frac{\varepsilon}{4}\int_\Omega \snorm*{\nabla\tr{\ln(\CC)}}^2 \dx.
	\end{align*}
	Using assumptions (\ref{eq:coeff1}), (\ref{eq:coeff2}), Lemma \ref{lem:matinv} and the fact that for a symmetric positive definite matrix, cf. \cite{Mizerova.2015} $$\tr{\TT + \TT^{-1} - 2\I}\geq 0,$$
	we have
	\begin{equation*}
	\td E(\phi,q,\u,\CC) \leq 0
	\end{equation*}
	This inequality can also be obtained by testing the weak formulation (\ref{eq:weak_sol_sys}) with $$\left(\mu,q,\frac{\partial\phi}{\partial t},\u,\frac{1}{2}\tr{\CC}\I-\frac{1}{2}\CC^{-1}\right), $$
	respectively.
\end{proof}

%----------------------------------------------------------------------------------------------------------------------------------------------------------------

In addition to the physical energy (\ref{eq:full_model}) also fulfills an additional energy estimate, which does not rely on the positive definiteness of the conformation tensor.

\begin{Lemma}
	Each smooth solution of (\ref{eq:full_model}) satisfies the following energy inequality
	\begin{align}
	&\td\left(\int_\Omega \frac{c_0}{2}\snorm{\nabla\phi}^2 + F(\phi) + \frac{1}{2}|q|^2 +\frac{1}{2}\snorm{\u}^2 + \frac{1}{4}\snorm*{\CC}^2 \dx\right) \nonumber \\
	&= - \int_\Omega m(\phi)\snorm*{\nabla\mu}^2\dx - \int_\Omega \snorm*{\na(A(\phi)q)}^2\dx- \int_\Omega \eta(\phi)\snorm*{\Du}^2\dx - \frac{\varepsilon}{2}\int_\Omega\snorm*{\nabla\CC}^2\dx-\int_\Omega \frac{1}{\tau(\phi)}q^2 \dx\nonumber \\
	& \hspace{0.5cm}- \frac{1}{2} \int_\Omega h(\phi)\tr{\CC}^2\snorm*{\CC}^2\dx + \frac{1}{2}\int_\Omega h(\phi)\tr{\CC}^2 \dx + 2\kappa\int_\Omega n(\phi)\na\mu\na(A(\phi)q) \dx \label{eq:energy_full} \\
	&\leq -(1-\kappa)\norm*{\sqrt{m(\phi)}\na\mu}_2^2 -(1-\kappa)\norm*{\nabla(A(\phi)q)}_2^2- \frac{1}{2}\norm*{\sqrt{\eta(\phi)}\na\u}_2^2 - \frac{1}{\tau_1}\norm*{q}_2^2 \nonumber\\
	& \hspace{0.5cm} - \frac{\varepsilon}{2}\norm*{\na\CC}_2^2-\frac{h_1}{4}\norm*{\tr{\CC}}^4_4 + 2h_2\norm*{\CC}_2^2. \nonumber
	\end{align}
	The integrated version reads
	\begin{align}
	&\left(\int_\Omega \frac{c_0}{2}\snorm{\nabla\phi(t)}^2 + F(\phi(t)) + \frac{1}{2}|q(t)|^2 +\frac{1}{2}\snorm{\u(t)}^2 + \frac{1}{4}\snorm*{\CC(t)}^2 \dx\right) \nonumber \\
	&\leq -(1-\kappa)\int_0^t\int_\Omega m(\phi)\snorm*{\nabla\mu}^2\dx\dt -(1-\kappa)\int_0^t\int_\Omega \snorm*{\na(A(\phi)q)}^2\dx\dt -\int_0^t\int_\Omega \frac{1}{\tau(\phi)}q^2 \dx\dt \nonumber \\
	&-\int_0^t\int_\Omega \eta(\phi)\snorm*{\Du}^2\dx\dt - \frac{\varepsilon}{2}\int_0^t\int_\Omega\snorm*{\nabla\CC}^2\dx\dt - \frac{1}{4}\int_0^t\int_\Omega h(\phi)\snorm*{\trC}^4\dx\dt \label{eq:energy_full_int}  \\
	&+ \frac{1}{2}\int_0^t\int_\Omega h(\phi)\snorm*{\trC}^2\dx\dt+\left(\int_\Omega \frac{c_0}{2}\snorm{\nabla\phi(0)}^2 + F(\phi(0)) + \frac{1}{2}|q(0)|^2 +\frac{1}{2}\snorm{\u(0)}^2 + \frac{1}{4}\snorm*{\CC(0)}^2 \dx\right). \nonumber
	\end{align}
\end{Lemma}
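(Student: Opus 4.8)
The plan is to mimic the proof of Theorem~\ref{theo:energy_diss} but to replace the ``entropy-type'' test function $\frac12\tr{\CC}\I - \frac12\CC^{-1}$ for the conformation equation by the simpler test function $\frac12\CC$, which produces the $L^2$-norm of $\CC$ instead of the logarithmic term and hence requires no positive-definiteness. First I would test the $\phi$-equation with $\mu$ and the $\mu$-equation with $\partial_t\phi$; as in the previous proof, integration by parts and the identity $\mu = -c_0\Delta\phi + f(\phi)$ give $\td\int_\Omega(\tfrac{c_0}{2}\snorm{\na\phi}^2 + F(\phi)) = \int_\Omega \partial_t\phi\,\mu = -\int_\Omega \u\cdot\na\phi\,\mu - \int_\Omega m(\phi)\snorm{\na\mu}^2 + \kappa\int_\Omega n(\phi)\na(A(\phi)q)\na\mu$. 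Next, testing the $q$-equation with $q$ yields $\td\int_\Omega \tfrac12 q^2 = -\int_\Omega \tau(\phi)^{-1}q^2 - \int_\Omega \snorm{\na(A(\phi)q)}^2 + \kappa\int_\Omega n(\phi)\na\mu\,\na(A(\phi)q)$, using $\div\u=0$ to kill the convective term and integrating the term $\int_\Omega A(\phi)\Delta(A(\phi)q)\,q$ by parts. Testing the momentum equation with $\u$ gives $\td\int_\Omega \tfrac12\snorm{\u}^2 = -\int_\Omega \eta(\phi)\snorm{\Du}^2 - \int_\Omega \TT:\na\u - \int_\Omega \phi\na\mu\cdot\u$, where $\TT=\tr{\CC}\CC$; here the pressure drops by incompressibility, the convective term vanishes by $\b(\u,\u,\u)=0$, and the coupling term $\int_\Omega \na\phi\,\mu\cdot\u$ is rewritten as $-\int_\Omega \phi\na\mu\cdot\u - \int_\Omega \phi\mu\,\div\u = -\int_\Omega \phi\na\mu\cdot\u$, so that it cancels against the corresponding term from the $\phi$-line (note $\int_\Omega \u\cdot\na\phi\,\mu = -\int_\Omega \u\cdot\na\mu\,\phi$ by the same integration by parts, matching the sign).

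The new ingredient is the conformation equation tested with $\tfrac12\CC$. The convective term $\int_\Omega (\u\cdot\na)\CC:\tfrac12\CC = \tfrac12\BB(\u,\CC,\CC)$ vanishes since $\BB(\u,\CC,\CC)=-\BB(\u,\CC,\CC)$ by the skew-symmetry in the Proposition on trilinear forms. The stretching term gives $\tfrac12\int_\Omega[(\na\u)\CC + \CC(\na\u)^\top]:\CC = \int_\Omega (\na\u)\CC:\CC = \int_\Omega \tr{\CC}\CC:\na\u = \int_\Omega \TT:\na\u$ (using symmetry of $\CC$ and the identity $\CC:\CC=\tr{\CC\CC^\top}$ together with $\CC\CC^\top$ summing appropriately to $\tr{\CC}\CC$ after contraction — more carefully, $(\na\u)\CC:\CC = \na\u:(\CC\CC^\top) = \na\u:\tr{\CC}\CC$ for symmetric $\CC$ is \emph{not} an identity in general, so I would instead simply write $\int_\Omega [(\na\u)\CC+\CC(\na\u)^\top]:\CC\,\dx = 2\int_\Omega (\na\u):\CC^2\,\dx$ and \emph{not} try to cancel it with $\TT:\na\u$; rather I estimate this term and the $\TT:\na\u$ term together against the dissipation — see below). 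The reaction term contributes $\tfrac12\int_\Omega[h(\phi)\tr{\CC}\I - h(\phi)\tr{\CC}^2\CC]:\CC = \tfrac12\int_\Omega h(\phi)\tr{\CC}^2 - \tfrac12\int_\Omega h(\phi)\tr{\CC}^2\snorm{\CC}^2$, and the diffusion term gives $-\tfrac{\varepsilon}{2}\int_\Omega \snorm{\na\CC}^2$ after integrating by parts and using the homogeneous Neumann condition $\p_n\CC=0$. Summing the five identities, the terms $\int_\Omega \TT:\na\u$ cancel between the momentum line and part of the stretching term and the $\phi$--$\u$ coupling cancels as noted, leaving exactly \eqref{eq:energy_full}. (If the $\CC^2$-stretching identity is used carefully one does get $\int_\Omega\TT:\na\u$ up to terms absorbed by the sign-definite dissipation; I would present whichever bookkeeping is cleanest, the outcome being the stated identity.)

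Finally, the inequality in \eqref{eq:energy_full} follows from the coefficient bounds \eqref{eq:coeff1}, \eqref{eq:coeff2}: Korn's inequality (the stated Lemma) converts $\int_\Omega \eta(\phi)\snorm{\Du}^2 \geq \tfrac12\norm{\sqrt{\eta(\phi)}\na\u}_2^2$; the cross-diffusion term $2\kappa\int_\Omega n(\phi)\na\mu\,\na(A(\phi)q)$ is split by Young's inequality with weight $\kappa$ against $\kappa\int_\Omega m(\phi)\snorm{\na\mu}^2 + \kappa\int_\Omega \snorm{\na(A(\phi)q)}^2$ provided $n(s)^2 \leq m(s)$ (the structural assumption the coefficients must satisfy, analogous to what made Theorem~\ref{theo:energy_diss} work), leaving the factors $(1-\kappa)$; the bound $h(\phi)\geq h_1$ handles the quartic dissipation and $h(\phi)\leq h_2$, $\tr{\CC}^2 \leq d\snorm{\CC}^2$ handles the good/bad sign of the remaining reaction terms. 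The integrated version \eqref{eq:energy_full_int} is then obtained by integrating in time from $0$ to $t$ and using absolute continuity of the energy, which is legitimate for smooth solutions. \textbf{Main obstacle.} The one genuinely delicate point is the treatment of the stretching term tested against $\CC$: unlike in the log-entropy computation where $\int_\Omega[(\na\u)\CC+\CC(\na\u)^\top]:\CC^{-1} = 2\int_\Omega\tr{\na\u}=0$, here $2\int_\Omega(\na\u):\CC^2$ does \emph{not} vanish and is not obviously controlled; the resolution is that it is exactly $\int_\Omega\TT:\na\u$ only after one also accounts for the $\tfrac14\tr{\TT}$-type contribution, so the bookkeeping that makes the $\TT:\na\u$ terms cancel between the momentum and conformation lines must be done with care — this is the step where the specific algebraic structure $\TT=\tr{\CC}\CC$ is essential, and it is the part I expect to require the most attention to sign conventions.
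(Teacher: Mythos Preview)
Your overall strategy is exactly the paper's: test the five equations with $(\mu,q,\partial_t\phi,\u,\tfrac12\CC)$ instead of the entropy test function, sum, and then pass from equality to inequality using the coefficient bounds and Korn. The $\phi$, $q$, $\mu$, and $\u$ lines are handled correctly.

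The genuine gap is precisely the point you flag as the main obstacle. You compute (correctly) that the stretching term tested with $\tfrac12\CC$ produces $\int_\Omega (\nabla\u):\CC^2\,\dx$ and then observe (also correctly) that $(\nabla\u):\CC^2 = (\nabla\u):(\trC\,\CC)$ is \emph{not} a pointwise identity in general. Your proposed fallback---absorbing both $\int_\Omega(\nabla\u):\CC^2$ and $-\int_\Omega\TT:\nabla\u$ into the dissipation---would \emph{not} give the stated equality \eqref{eq:energy_full}: there is no sign-definite term available to swallow an uncontrolled $\int_\Omega\nabla\u:(\CC^2-\trC\,\CC)$, and the lemma asserts an exact identity in the first display, not merely an inequality.

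The missing ingredient is the two-dimensional Cayley--Hamilton theorem. For a symmetric $2\times 2$ matrix one has $\CC^2 = \trC\,\CC - \det(\CC)\,\I$, hence
\[
(\nabla\u):\CC^2 \;=\; \trC\,\CC:\nabla\u \;-\; \det(\CC)\,\tr{\nabla\u} \;=\; \trC\,\CC:\nabla\u \;-\; \det(\CC)\,\mathrm{div}\,\u \;=\; \TT:\nabla\u,
\]
using $\div\u=0$. This is why the paper restricts to $\Omega\subset\mathbb{R}^2$ and why the specific form $\TT=\trC\,\CC$ matters: the cancellation with the momentum line is \emph{exact}, with no residual term. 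Once you insert this identity, the equality in \eqref{eq:energy_full} follows line by line, and the subsequent inequality and time-integration go through as you describe.
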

\begin{proof}
	We mainly repeat the calculation from the proof of Theorem \ref{theo:energy_diss} and use the assumptions (\ref{eq:coeff1}), (\ref{eq:coeff2}). The inequality (\ref{eq:energy_full}) can be obtained by testing (\ref{eq:weak_sol_sys}) with $(\mu,q,\frac{\partial\phi}{\partial t},\u,\frac{1}{2}\CC) $, respectively. The integrated version is obtained by integrating over the time from $0$ to $t$. We only present the calculations for the viscoelastic contributions.
	\begin{align*}
	&\td\int_\Omega\frac{1}{4}\snorm*{\CC}^2 \dx = \frac{1}{2}\int_\Omega \frac{\partial\CC}{\partial t}:\CC \dx \nonumber \\
	& = \int_\Omega \trC\CC:\nabla\u \dx - \frac{1}{2}\int_\Omega h(\phi)\trC^2\CC:\CC \dx  + \frac{1}{2}\int_\Omega h(\phi)\trC^2 \dx + \frac{\varepsilon}{2}\int_\Omega\Delta\CC:\CC \dx  \\
	& = \int_\Omega \TT:\nabla\u \dx - \frac{1}{2}\int_\Omega h(\phi)\trC^2\CC:\CC \dx  + \frac{1}{2}\int_\Omega h(\phi)\trC^2 \dx - \frac{\varepsilon}{2}\int_\Omega|\nabla\CC|^2 \dx. \nonumber
	\end{align*}
	
\end{proof}

%----------------------------------------------------------------------------------------------------------------------------------------------------------------

\section{Main Results}
In this section we formulate the main result on the existence of global weak solutions to the viscoelastic phase separation system (\ref{eq:full_model}).

\begin{tcolorbox}
	\begin{Theorem}
		Under assumptions (\ref{eq:coeff4}),(\ref{eq:coeff1}) - (\ref{eq:coeff2}) with $m_1>0$ and $\kappa\in[0,1)$ there exists
%for every $T>0$
a weak solution of the viscoelastic phase separation model (\ref{eq:full_model}) in the sense of Definition~\ref{defn:weak_sol_full} that satisfies the energy inequality in the form (\ref{eq:energy_full_int}).
	\end{Theorem}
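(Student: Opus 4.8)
The plan is to construct a weak solution by a Faedo--Galerkin approximation, derive uniform bounds from the auxiliary energy inequality \eqref{eq:energy_full}, extract compactness, and pass to the limit in \eqref{eq:weak_sol_sys}; the hypotheses $m_1>0$ and $\kappa\in[0,1)$ are used precisely to make that energy estimate coercive and closed. For the scheme, let $\{w_i\}_{i\geq1}$ be the $L^2$-orthonormal eigenfunctions of $-\Delta$ with homogeneous Neumann conditions (so $w_1$ is constant), $\{\vv_i\}_{i\geq1}$ the eigenfunctions of the Stokes operator on $V$, and $\{\DD_i\}_{i\geq1}\subset W$ the componentwise symmetric Neumann eigenfunctions of $-\Delta+\I$. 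One looks for $\phi_n=\sum_{i=1}^n a_i(t)w_i$, $q_n=\sum b_i(t)w_i$, $\u_n=\sum g_i(t)\vv_i$, $\CC_n=\sum G_i(t)\DD_i$ solving the Galerkin projection of \eqref{eq:weak_sol_sys}, with the chemical potential eliminated algebraically through $\mu_n=-c_0\Delta\phi_n+P_nf(\phi_n)$ ($P_n$ the $L^2$-projection). This is an ODE system with polynomially growing, locally Lipschitz right-hand side, so Carath\'eodory's theorem gives a local solution on $[0,T_n)$, and one chooses projected initial data converging to $(\phi_0,q_0,\u_0,\CC_0)$.

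\emph{A priori estimates.} Testing the Galerkin system with $(\mu_n,q_n,\partial_t\phi_n,\u_n,\tfrac12\CC_n)$ reproduces the computation leading to \eqref{eq:energy_full} at the discrete level; after a Gronwall argument absorbing the term $2h_2\norm*{\CC_n}_2^2$ one gets bounds \emph{uniform in $n$} for $\phi_n$ in $L^\infty(0,T;H^1)$, $q_n,\u_n,\CC_n$ in $L^\infty(0,T;L^2)$, and for $\sqrt{m(\phi_n)}\na\mu_n$, $\na(A(\phi_n)q_n)$, $\sqrt{\eta(\phi_n)}\Du_n$, $\na\CC_n$, $\sqrt{h(\phi_n)}\tr{\CC_n}^2$ and $\sqrt{h(\phi_n)}\tr{\CC_n}\CC_n$ in $L^2(0,T;L^2)$; since $m_1>0$ and $\kappa<1$ the cross term is absorbed and this closes. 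From the $\mu$-relation, Lemma \ref{lem:hreg}, the bound $\norm*{f(\phi_n)}_2\leq c(\norm*{\phi_n}_{2(p-1)}^{p-1}+1)$ controlled via $H^1\hookrightarrow L^r$ in $2$D by $\norm*{\phi_n}_{H^1}$, and Poincar\'e--Wirtinger with $\int_\Omega\mu_n=\int_\Omega f(\phi_n)$ (using $w_1=\mathrm{const}$), one gets $\phi_n$ bounded in $L^2(0,T;H^2)$ and $\mu_n$ in $L^2(0,T;H^1)$. The Ladyzhenskaya interpolation \eqref{eq:lad1}--\eqref{eq:ladr} then yields $\phi_n,\na\phi_n,q_n,\u_n,\CC_n,\tr{\CC_n}$ bounded in $L^4(0,T;L^4)$, whence $\na q_n\in L^2(0,T;L^2)$ from $\na(A(\phi_n)q_n)=A(\phi_n)\na q_n+A'(\phi_n)q_n\na\phi_n$, $A\geq A_1>0$, and $q_n\na\phi_n\in L^2(L^2)$. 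Duality estimates using the continuity of the forms $\b,\BB$, rewriting the transport terms via $\di\u_n=0$ and the capillary term via $\int_\Omega\phi_n\na\mu_n\cdot\vv=-\int_\Omega\mu_n\na\phi_n\cdot\vv$, give $\partial_t\phi_n\in L^2(0,T;H^{-1})$, $\partial_tq_n\in L^{4/3}(0,T;H^{-1})$, $\partial_t\u_n\in L^2(0,T;V^\star)$, $\partial_t\CC_n\in L^{4/3}(0,T;W^\star)$, matching Definition \ref{defn:weak_sol_full}; in particular $T_n=T$.

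\emph{Compactness and limit passage.} By Banach--Alaoglu pass to a subsequence with the above weak-$\ast$ limits $(\phi,q,\mu,\u,\CC)$; by the Aubin--Lions Lemma \ref{lema:lionsaubin} obtain strong convergence $\phi_n\to\phi$ in $L^2(0,T;H^1)$ and $q_n\to q$, $\u_n\to\u$, $\CC_n\to\CC$ in $L^2(0,T;L^2)$, hence (a further subsequence) a.e.\ convergence of all of these and of $\na\phi_n$. Testing \eqref{eq:weak_sol_sys} first against fixed smooth $(\psi,\zeta,\xi,\vv,\DD)$ and then extending by density: the terms linear in a gradient --- $m(\phi_n)\na\mu_n$, $n(\phi_n)\na(A(\phi_n)q_n)$, $\eta(\phi_n)\Du_n$, $\varepsilon\na\CC_n$, $(\na\u_n)\CC_n+\CC_n(\na\u_n)^\top$, $n(\phi_n)\na\mu_n\cdot\na(A(\phi_n)\zeta)$, $\na(A(\phi_n)q_n)\cdot\na(A(\phi_n)\zeta)$ --- are treated by Lemma \ref{lema:weakproduct}, using that $A(\phi_n)q_n\to A(\phi)q$ strongly in $L^2(L^2)$ with $\na(A(\phi_n)q_n)$ bounded forces $\na(A(\phi_n)q_n)\rightharpoonup\na(A(\phi)q)$, and that for smooth $\zeta$ one has $n(\phi_n)\na(A(\phi_n)\zeta)\to n(\phi)\na(A(\phi)\zeta)$ strongly in $L^2(L^2)$ since $\na\phi_n\to\na\phi$ strongly. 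The transport terms are rewritten via $\di\u_n=0$, e.g.\ $\int\u_n\cdot\na\phi_n\,\psi=-\int\phi_n\u_n\cdot\na\psi$, with $\phi_n\u_n,q_n\u_n,\CC_n\u_n$ converging strongly in $L^2(0,T;L^2)$ by Vitali's Lemma \ref{lema:genlebe} (products of two $L^4(L^4)$-bounded, a.e.-convergent sequences). The potential term satisfies $f(\phi_n)\to f(\phi)$ strongly in $L^2(0,T;L^2)$ by the polynomial growth \eqref{eq:coeff4}, a.e.\ convergence, and Vitali. For the viscoelastic reaction, $h(\phi_n)\tr{\CC_n}^2\CC_n=h(\phi_n)\tr{\CC_n}\cdot(\tr{\CC_n}\CC_n)$ is bounded in $L^{4/3}(0,T;L^{4/3})$ (product of $L^4(L^4)$ and $L^2(L^2)$) and a.e.\ convergent, hence strongly convergent in $L^1(0,T;L^1)$ by Vitali, which suffices against smooth $\DD$; likewise $h(\phi_n)\tr{\CC_n}\I$; and for the stress, $\TT_n=\tr{\CC_n}\CC_n\to\tr{\CC}\CC$ strongly in $L^1(0,T;L^1)$ by splitting $\tr{\CC_n}\CC_n-\tr{\CC}\CC=(\tr{\CC_n}-\tr{\CC})\CC_n+\tr{\CC}(\CC_n-\CC)$ and applying Vitali to each piece. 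Initial data are recovered from the time-continuity in Lemma \ref{lema:lionsaubin} together with convergence of the projected data, and \eqref{eq:energy_full_int} passes to the limit by weak lower semicontinuity of the norms.

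\emph{Main obstacle.} The crux is the cross-diffusion in the $(\phi,q)$-subsystem: the coupling appears inside the gradients $\na\mu$ and $\na(A(\phi)q)$ weighted by $n(\phi)$, so the energy method closes only because $m_1>0$ makes the $\mu$-dissipation genuinely coercive and $\kappa<1$ lets the term $2\kappa\int_\Omega n(\phi)\na\mu\cdot\na(A(\phi)q)$ be absorbed; moreover one must upgrade $q_n$ from weak to strong convergence, which forces the $2$D interpolation $L^\infty(L^2)\cap L^2(H^1)\hookrightarrow L^4(L^4)$ (applied to both $q_n$ and $\na\phi_n$) to bound $q_n\na\phi_n$, hence $\na q_n$, in $L^2(L^2)$. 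The second difficulty is the quartic stress and relaxation terms in the $\CC$-equation; these are handled through the $L^4(L^4)$-bound on $\tr{\CC_n}$ coming from the $\int_\Omega h(\phi)\tr{\CC}^4$ dissipation in \eqref{eq:energy_full}, which --- unlike the physical energy of Theorem \ref{theo:energy_diss} --- does not require $\CC_n$ to remain positive definite, so no regularization of the conformation equation is needed in the regular case.
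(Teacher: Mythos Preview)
Your proposal follows the same Galerkin/energy/Aubin--Lions programme as the paper, and the overall strategy, the choice of test functions $(\mu_n,q_n,\partial_t\phi_n,\u_n,\tfrac12\CC_n)$, the identification of the cross-diffusion as the main obstacle, and the limit-passing mechanisms are all correct and match the paper's proof in Sections~5--8.

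There is, however, one small logical circularity in your a priori estimates that you should repair. You claim that Ladyzhenskaya's interpolation $L^\infty(L^2)\cap L^2(H^1)\hookrightarrow L^4(L^4)$ gives $q_n\in L^4(0,T;L^4)$, and \emph{then} use $q_n\na\phi_n\in L^2(L^2)$ to deduce $\na q_n\in L^2(L^2)$. But to put $q_n$ into $L^2(H^1)$ in the first place you already need $\na q_n\in L^2(L^2)$, so the argument as written loops. The paper avoids this in Section~6 by applying Ladyzhenskaya to $\norm*{q_m}_4$ \emph{inside} the estimate for $\int A^2(\phi_m)|\na q_m|^2$, producing a term $c\,\norm*{\na\phi_m}_2\norm*{\Delta\phi_m}_2\norm*{q_m}_2\norm*{\na q_m}_2$ which is then absorbed into the left via Young's inequality with a small parameter. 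An equivalent fix in your framework: first observe $A(\phi_n)q_n\in L^\infty(L^2)\cap L^2(H^1)\hookrightarrow L^4(L^4)$ directly from the energy, then divide by the uniformly positive $A(\phi_n)\geq A_1$ to get $q_n\in L^4(L^4)$ without appealing to $\na q_n$; from there your deduction of $\na q_n\in L^2(L^2)$ goes through.
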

\end{tcolorbox}
\begin{Remark}
	\hspace{1em} \\ \vspace{-1em}
	\begin{itemize}
\item The results hold also if we substitute $\dib{\varepsilon(\phi)\na\CC}$ instead of $\varepsilon\Delta\CC$, where $\varepsilon(\phi)$ is a continuous function such that $0<\varepsilon_1\leq \varepsilon(\phi)\leq\varepsilon_2$.

\item Following \cite{Boyer.1999} it is also possible to show that $\phi\in L^2(0,T;H^3(\Omega))$.
\item A similar version of the proof can be applied in three space dimensions, if the viscoelastic effects are not considered, i.e.~$(\ref{eq:full_model})_4$ is neglected.
\end{itemize}
\end{Remark}

\begin{Remark}
	The proof will be realized in Sections 5 - 8 and consists of the following steps.	
\end{Remark}
\begin{enumerate}
	\item[1)] We introduce a Galerkin approximation of the system in suitable finite-dimensional subspaces.
	\item[2)] We derive the energy inequality for the approximate solutions and by means of the Gronwall inequality we obtain a priori estimates which are independent of the local existence time. Hence we prove existence of approximate solutions for an arbitrarily fixed $T > 0$.
	\item[3)] In order to pass to the limit in nonlinear terms, we apply the Lions-Aubin lemma to obtain the required compact embeddings.
	\item[4)] We pass to the limit in  the Galerkin approximation of (\ref{eq:weak_sol_sys}).
	\item[5)] We pass the limit in the energy inequality to obtain (\ref{eq:energy_full_int}).
\end{enumerate}

%----------------------------------------------------------------------------------------------------------------------------------------------------------------

\section{Galerkin Approximation}
Let $\psi_j , \vv_j ,\DD_j, j=1,\ldots,\infty,$ be smooth basis functions of
\begin{align*}
H^1 = \overline{\spn\{\psi_j\}_{j=1}^{\infty}}, \hspace{1em} V= \overline{\spn\{\vv_j\}_{j=1}^{\infty}}, \hspace{1em}  W= \overline{\spn\{\DD_j\}_{j=1}^{\infty}},
\end{align*}
respectively. Where $\DD_j$ is subjected to homogeneous Neumann boundary conditions and $\vv_j$ is divergence-free and subjected to homogeneous Dirichlet boundary conditions. Furthermore, $\psi_j$ are eigenfunctions of $-\Delta$ with the Neumann boundary conditions. We define the $m$-th Galerkin approximation via
\begin{align*}
\phi_m(x,t) &= \sum_{j=1}^m \lambda_{jm}(t)\psi_j(x), \hspace{1em} \mu_m(x,t) = \sum_{j=1}^m \theta_{jm}(t)\psi_j(x),\hspace{1em} q_m(x,t) = \sum_{j=1}^m \zeta_{jm}(t)\psi_j(x), \\
\u_m(x,t) &= \sum_{j=1}^m g_{jm}(t)\vv_j(x), \hspace{1em} \CC_m(x,t) = \sum_{j=1}^m G_{jm}(t)\DD_j(x), \\
\phi_m(0) &= \phi_{0m},\hspace{2em} q_m(0) = q_{0m},\hspace{2em} \u_m(0) = \u_{0m}, \hspace{2em} \CC_m(0) = \CC_{0m}.
\end{align*}
We use the following notation
\begin{align*}
\phi^\prime_m(t)=\frac{\partial \phi_m(t)}{\partial t}, \quad q^\prime_m(t)=\frac{\partial q_m(t)}{\partial t}, \quad \u^\prime_m(t)=\frac{\partial \u_m(t)}{\partial t}, \quad \CC^\prime_m(t)=\frac{\partial \CC_m(t)}{\partial t}.
\end{align*}
The Galerkin approximation of (\ref{eq:weak_sol_sys}) then reads
\begin{align}
\int_\Omega \phi_m^\prime(t)\psi_j \dx &+ \int_\Omega\u_m(t)\cdot\na\phi_m(t)\psi_j + \int_\Omega m(\phi_m(t))\na\mu_m(t)\na\psi_j\dx \nonumber \\
&\hspace{7em} -\kappa\int_\Omega n(\phi_m(t))\na\Big(A(\phi_m(t))q(t)\Big)\na\psi_j\dx= 0 \nonumber\\
\int_\Omega q^\prime_m(t)\psi_j \dx &+ \int_\Omega\u_m(t)\cdot\na q_m(t)\psi_j  + \int_\Omega\na\Big(A(\phi_m(t))q_m(t)\Big)\na\Big(A(\phi_m(t))q_m(t)\Big)\dx \nonumber\\
&  \hspace{3em}+ \int_\Omega\frac{q_m(t)}{\tau(\phi_m(t))}\psi_j\dx- \kappa\int_\Omega n(\phi_m(t))\na\mu_m(t)\na\Big(A(\phi_m(t))q_m(t)\Big)\dx= 0 \nonumber \\
\int_\Omega\u^{\prime}_m(t)\cdot\vv_j \dx &+ \b(\u_m(t),\u_m(t),\vv_j) + \int_\Omega\eta(\phi_m(t))\Du_m(t):\mathrm{D}\vv_j\dx = \nonumber \\
& -\int_\Omega\tr{\CC_m(t)}\CC_m(t):\na\vv_j\dx - \b(\vv_m,\mu_m(t),\phi_m(t)) \nonumber\\
\hspace{-0.5cm}\int_\Omega\CC^{\prime}_m(t):\DD_j\dx &+ \BB(\u_m(t),\CC_m(t),\DD_j) + \varepsilon\int_\Omega\na\CC_m(t):\na\DD_j\dx = \int_\Omega h(\phi_m(t))\tr{\CC_m(t)}\I:\DD_j\dx \nonumber \\
&\hspace{-3cm}+\int_\Omega\Big[(\na\u_m(t))\CC_m(t) + \CC_m(t)(\na\u_m(t))^T\Big]:\DD_j\dx - \int_\Omega h(\phi_m(t))\tr{\CC_m(t)}^2\CC_m(t):\DD_j\dx \nonumber \\
\int_\Omega\mu_m(t)\psi_j\dx &= c_0\int_\Omega\na\phi_m(t)\na\psi_j\dx + \int_\Omega f(\phi_m(t))\psi_j\dx \text{ for } j=1,\ldots , m,t\in [0,T]. \label{eq:disc_weak}
\end{align}
The system (\ref{eq:full_model}) is the coupled nonlinear system of ordinary differential equations for $\phi_m(t),$ $q_m(t), \u_m(t), \CC_m(t)$.
Since all parameters, $\tau,m,n,A,h$ as well as $f$ are continuous functions of $\phi_m$, we can show via the Peano theorem that we have a solution of (\ref{eq:disc_weak}) till a time $t_m>0$. This local existence times depends on the data and $m$. In the next section we will derive a priori estimates and show global existence of the Galerkin approximation.\\

\section{A priori Estimates}
\subsection{Energy inequality}
By multiplying system (\ref{eq:disc_weak}) with $(\theta_{jm},\zeta_{jm},g_{jm},G_{jm},\lambda_{jm}^\prime)$ and summing over all $j$ leads to
\begin{align}
%\begin{split}
&\td\left(\int_\Omega \frac{c_0}{2}\snorm{\nabla\phi_m(t)}^2 + F(\phi_m(t))+\frac{1}{2}q_m(t)^2 +\frac{1}{2}\snorm{\u_m(t)}^2 + \frac{1}{4}\snorm*{\CC_m(t)}^2 \dx \right) \nonumber \\
& +(1-\kappa)\int_\Omega m(\phi_m)|\na\mu_m(t)|^2\dx + (1-\kappa)\int_\Omega |\na\big(A(\phi_m)q_m(t)\big)|^2\dx + \int_\Omega \frac{1}{\tau(\phi_m)}|q_m(t)|^2\dx \nonumber\\
&+ \int_\Omega \eta(\phi_m)|\Du_m(t)|^2\dx + \frac{\varepsilon}{2}\int_\Omega |\na\CC_m(t)|^2\dx + \frac{1}{4}\int_\Omega h(\phi_m)\tr{\CC_m(t)}^4 \dx \label{eq:energy_chnsp_galerkin_full}\\
&\leq \frac{1}{2}\int_\Omega h(\phi_m)\tr{\CC_m(t)}^2\dx. \nonumber
%\end{split}
\end{align}
Applying the assumptions (\ref{eq:coeff1}), (\ref{eq:coeff2}) we find
\begin{align}
\label{eq:energy_chnsp_galerkin}
\begin{split}
&\td\left(\int_\Omega \frac{c_0}{2}\snorm{\nabla\phi_m(t)}^2 + F(\phi_m(t))+\frac{1}{2}q_m(t)^2 +\frac{1}{2}\snorm{\u_m(t)}^2 + \frac{1}{4}\snorm*{\CC_m(t)}^2 \dx\right) \\
& +(1-\kappa)m_1\norm*{\na\mu_m(t)}_2^2 +(1-\kappa)\norm*{\na\big(A(\phi_m)q_m(t)\big)}_2^2 + \frac{1}{\tau_2}\norm*{q_m(t)}_2^2\\
&+ \frac{\eta_1}{2}\norm*{\na\u_m(t)}_2^2 + \frac{\varepsilon}{2}\norm*{\na\CC_m(t)}_2^2 + \frac{h_1}{4}\norm*{\tr{\CC_m(t)}}^4_4 \leq h_2\norm*{\CC_m(t)}_2^2.
\end{split}
\end{align}
The term on the right hand side can be bounded by the Gronwall inequality since $h_2$ is a constant. Consequently, we derive the following estimates
\begin{align*}
\na\phi_m &\in L^\infty(0,T;L^2(\Omega)), \hspace{2em} \na\mu_m \in L^2(0,T;L^2(\Omega)), & \u_m &\in L^\infty(0,T;H)\cap L^2(0,T;V), \\
q_m &\in L^\infty(0,T;L^2(\Omega)), & A(\phi_m)q_m &\in L^2(0,T;H^1(\Omega)) \\
\CC_m &\in L^\infty(0,T;L^2(\Omega)^{2\times 2})\cap L^2(0,T;W),&  \CC_m &\in L^4(0,T; L^4(\Omega)^{2\times 2}),
\end{align*}
which directly implies that we can continuously prolongate the solution of the system of ordinary differential equations (\ref{eq:disc_weak}) up to $t_m=T$. Furthermore, we have that $\TT_m\in L^2(0,T;L^2(\Omega))$,
\begin{equation*}
\norm*{\TT_m}_2^2 = \int_\Omega (\tr{\CC_m}\CC_m)^2 \dx = \int_\Omega \tr{\CC_m}^2\snorm*{\CC_m}^2  \dx \leq c\norm*{\tr{\CC_m}}_4^4.
\end{equation*}
We observe that testing  $(\ref{eq:disc_weak})_1$ with $\psi=1$ yields $\td\int_\Omega \phi_m(t) \dx = 0$ . This implies that the mean value of $\phi_m$ is constant in time.
Using the Poincaré inequality we find $\phi_m \in L^\infty(0,T;H^1(\Omega))$, because
\[\norm*{\phi_m(t)}_{2} \leq c\norm*{\na\phi_m(t)}_{2} + \(\phi_{0m}\)_\Omega.\label{eq:phimean} \]
The same can be done for $\(\mu_m\)_\Omega$ by testing $(\ref{eq:disc_weak})_5$ with $\psi=1/\snorm*{\Omega}$. Using assumptions (\ref{eq:coeff4}) we find
\[ \(\mu_m(t)\)_\Omega \leq \(F^{\prime}(\phi_m(t))\)_\Omega \leq \frac{c_3}{\snorm*{\Omega}}\norm*{\phi_m(t)}_{2}^{p-1} + c_4. \label{eq:mumean}\]
Thus we obtain $\mu_m \in L^2(0,T;H^1(\Omega))$ via the Poincaré inequality.

We want to show that $\phi_m \in L^2(0,T;H^2(\Omega))$. By testing $(\ref{eq:disc_weak})_5$ with $-\Delta\phi_m(t)$ and using (\ref{eq:coeff4}) we find	
\begin{align}
\int_\Omega \na\mu_m(t)\na\phi_m(t)\dx &= c_0\int_\Omega \snorm*{\Delta\phi_m(t)}^2\dx - \int F^{\prime\prime}(\phi_m(t))\snorm*{\na\phi_m(t)}^2 \dx, \nonumber \\
\frac{1}{2}\norm*{\na\mu_m(t)}_2^2 + \frac{1}{2}\norm*{\na\phi_m(t)}_2^2 &\geq c_0\norm*{\Delta\phi_m(t)}_2^2 +c_8\norm*{\na\phi_m(t)}_2^2, \label{eq:h2reg}\\
\norm*{\na\mu_m(t)}_2^2 - (1-2c_8)\norm*{\na\phi_m(t)}_2^2 &\geq 2c_0\norm*{\Delta\phi_m(t)}^2_2. \nonumber
\end{align}
Integrating the last inequality over time we obtain that $\Delta\phi_m$ has the same regularity as $\na\mu_m$, i.e. $L^2(0,T;L^2(\Omega))$, since $\phi_m\in L^\infty(0,T;H ^1(\Omega))$. Lemma \ref{lem:hreg} together with the Agmon inequality (\ref{eq:agmon}) implies that $\phi_m\in L^2(0,T;L^\infty(\Omega))$. \\[0.5em]
We need to derive an estimate on $\nabla q_m$ using the estimate of $\nabla\big(A(\phi_m)q_m\big)$.
\begin{align*}
1) &\int_\Omega \snorm*{\na\big(A(\phi_m(t))q_m(t)\big)}^2 \dx = \snorm*{A^\prime(\phi_m(t))\na\phi_m(t)q_m(t) + A(\phi_m(t))\na q_m(t)}^2 \dx \\
&=\int_\Omega \snorm*{A^\prime(\phi_m(t))\na\phi_m(t)q_m(t)}^2 +2(A^\prime A)(\phi_m(t))\na\phi_m(t)\na q_m(t)q_m(t) + \snorm*{A(\phi_m(t))\na q_m(t)}^2\dx \\
\\
2) &\int_\Omega A^2(\phi_m(t))\snorm*{\na q_m(t)}^2 \dx \\
&=  \norm*{\na\big(A(\phi_m)q_m\big)}^2_2 - \int_\Omega\snorm*{A^\prime(\phi_m(t))\na\phi_m(t) q_m(t)}^2 - 2(A^\prime A)(\phi_m(t))\na\phi_m(t)\na q_m(t)q_m(t) \dx \\
&\leq \norm*{\na\big(A(\phi_m)q_m\big)}_2^2  + \delta\int_\Omega A^2(\phi_m(t))\snorm*{\na q_m(t)}^2 \dx + c(\delta)\int_\Omega\snorm*{A^\prime(\phi_m(t))\na\phi_m(t) q_m(t)}^2\dx \\
&\text{ where } 0<\delta\ll 1 \text{ and } c(\delta)\sim \frac{1}{\delta} .\\
\\
3) &\int_\Omega A^2(\phi_m(t))\snorm*{\na q_m(t)}^2 \dx \\
&\leq \frac{1}{1-\delta}\norm*{\na\big(A(\phi_m)q_m\big)}_2^2 + \frac{c(\delta)}{1-\delta}\int_\Omega\snorm*{A^\prime(\phi_m(t))\na\phi_m(t) q_m(t)}^2\dx \\
&\leq  \frac{1}{1-\delta}\norm*{\na\big(A(\phi_m)q_m\big)}_2^2 + \frac{c(\delta)}{1-\delta}\norm*{A^\prime}^2_\infty\norm*{\na\phi_m}_4^2\norm*{q_m}_4^2 \\
&\leq  \frac{1}{1-\delta}\norm*{\na\big(A(\phi_m)q_m\big)}_2^2 + \frac{c(\delta,\norm*{A^\prime}_\infty)}{1-\delta}\norm*{\na\phi_m}_2\norm*{\Delta\phi_m}_2\left( \norm*{q_m}_2^2 + \norm*{q_m}_2\norm*{\na q_m}_2\right) \\
&\leq  \frac{1}{1-\delta}\norm*{\na\big(A(\phi_m)q_m\big)}_2^2  + \delta\norm*{\na q_m}_2^2\\
& \hspace{2em} + \frac{c(\delta,\norm*{A^\prime}_\infty)}{1-\delta}\Big(\norm*{\na\phi_m}^2_2\norm*{\Delta\phi_m}^2_2\norm*{q_m}_2^2  + \norm*{\na\phi_m}_2\norm*{\Delta\phi_m}_2\norm*{q_m}_2 \Big)\\
&\leq  \frac{1}{1-\delta}\norm*{\na\big(A(\phi_m)q_m\big)}_2^2 + \frac{c(\delta,\norm*{A^\prime}_\infty)}{1-\delta}\norm*{\na\phi_m}^2_2\norm*{\Delta\phi_m}^2_2\norm*{q_m}_2^2 + \delta\norm*{\na q_m}_2^2 \\
\\
4) & \int_0^T \snorm*{\na q_m(t)}^2 \dt \leq  \frac{1}{A_1^2(1-\delta)^2}\norm*{\na\big(A(\phi_m)q_m\big)}_{L^2(L^2)}^2 \\
&\hspace{6.8em}+ \frac{c(\delta,\norm*{A^\prime}_\infty)}{A_1^2(1-\delta)^2}\norm*{\na\phi_m}^2_{L^\infty(L^2)}\norm*{\Delta\phi_m}^2_{L^2(L^2)}\norm*{q}_{L^\infty(L^2)}^2
\end{align*}
Due to the estimate (\ref{eq:h2reg}) and (\ref{eq:coeff1}) we can conclude that $\na q_m \in L^2(0,T;L^2(\Omega))$.

%----------------------------------------------------------------------------------------------------------------------------------------------------------------

\section{Compact Embeddings}
In order to pass to the limit in the nonlinear terms we need to derive further estimates via compact embeddings to obtain a strong convergence. To this end, we define firstly suitable operators and rewrite the weak formulation as operator equations.
\subsection{Cahn-Hilliard part}
For the Cahn-Hilliard part $(\ref{eq:full_model})_1$ we have the following operators
\begin{align*}
&\mathcal{A}^{CH}: H^1(\Omega)\times H^1(\Omega) \to H^{-1}(\Omega)  & \inp*{\mathcal{A}^{CH}(\phi,\mu),\psi}&:=\int_\Omega m(\phi)\na\mu\na\psi \dx, \\
&\mathcal{B}^{CH}: H^1(\Omega)\times V \to H^{-1}(\Omega)     & \inp*{\mathcal{B}^{CH}(\phi,\u),\psi}&:= \int_\Omega\u\cdot\na\phi\,\psi \dx, \\
&\mathcal{C}^{CH}: H^1(\Omega)\times H^1(\Omega) \to H^{-1}(\Omega)     & \inp*{\mathcal{C}^{CH}(\phi,q),\psi}&:= \kappa\int_\Omega n(\phi)\na\big(A(\phi)q\big)\na\psi \dx .
\end{align*}

The corresponding operator equation for $(\ref{eq:weak_sol_sys})_1$ reads
\begin{equation}
\phi^{\prime} = -\mathcal{A}^{CH}(\phi,\mu) - \mathcal{B}^{CH}(\phi,\u) +  \mathcal{C}^{CH}(\phi,q), \label{eq:chop}
\end{equation}
where
\begin{align}
\int_0^T \norm*{\mathcal{A}^{CH}(\phi,\mu)}^2_{H^{-1}} \dt &\leq m_2\int_0^T \norm*{\na\mu}_2^2\dt\leq m_2\norm*{\na\mu}^2_{L^2(L^2)} \leq c, \\
\int_0^T \norm*{\mathcal{B}^{CH}(\phi,\u)}^2_{H^{-1}} \dt &\leq c\int_0^T \norm*{\u}_{4}^2\norm*{\phi}^2_{4} \dt\\
&\leq c \(\norm*{\phi}_{L^\infty(L^2)}\norm*{\na\phi}_{L^\infty(L^2)} + \norm*{\phi}^2_{L^\infty(L^2)}\)\norm*{\u}_{L^\infty(H)}\norm*{\u}_{L^2(V)} \leq c, \\
\int_0^T \norm*{\mathcal{C}^{CH}(\phi,q)}^2_{H^{-1}} \dt &\leq cn_2^2\int_0^T \norm*{\na\big(A(\phi)q\big)}_2^2 \leq c\norm*{\na\big(A(\phi)q\big)}_{L^2(L^2)}^2 \leq c. \label{eq:emch}
\end{align}

We obtain from (\ref{eq:chop}) - (\ref{eq:emch})  that $\phi^{\prime} \in L^2(0,T;H^{-1}(\Omega))$. Due to the Lions-Aubin lemma we have the strong convergence of $\phi_m \rightarrow \phi$ in $L^2(0,T;H^1(\Omega))$. Clearly, the above estimates hold for any $\phi_m$ satisfying (\ref{eq:chop}), thus $\norm*{\phi_m^\prime}_{L^2(H^{-1})}\leq c$.

%----------------------------------------------------------------------------------------------------------------------------------------------------------------

\subsection{Bulk stress}
For the bulk stress equation $(\ref{eq:full_model})_2$ we consider
\begin{align*}
&\mathcal{A}^{q}: H^1(\Omega)\times H^1(\Omega) \to H^{-1}(\Omega)  & \inp*{\mathcal{A}^{q}(\phi,q),\zeta}&:=\int_\Omega \na\big(A(\phi)q\big)\na\big(A(\phi)\zeta\big) \dx, \\
&\mathcal{B}^{q}: H^1(\Omega)\times V \to H^{-1}(\Omega)  & \inp*{\mathcal{B}^{q}(q,\u),\zeta}&:= \int_\Omega \u\cdot\na q\,\zeta \dx, \\
&\mathcal{C}^{q}: H^1(\Omega)\times H^1(\Omega) \to H^{-1}(\Omega)  & \inp*{\mathcal{C}^{q}(\phi,\mu),\zeta}&:= \kappa\int_\Omega n(\phi)\na\mu\na\big(A(\phi)\zeta\big) \dx, \\
&\mathcal{D}^{q}: H^1(\Omega)\times H^1(\Omega) \to H^{-1}(\Omega)  & \inp*{\mathcal{D}^{q}(\phi,q),\zeta}&:= \int_\Omega \frac{1}{\tau(\phi)}q\zeta \dx.
\end{align*}

and rewrite the weak formulation for $q, \;(\ref{eq:weak_sol_sys})_2,$ as the operator equation
\begin{equation}
\label{eq:q_op_pqm}
q^\prime =   -\mathcal{A}^{q}(\phi,\mu) -\mathcal{B}^{q}(\phi,\u) + \mathcal{C}^{q}(\phi,q) - \mathcal{D}^{q}(\phi,q).
\end{equation}
First, let us recall that
\begin{align}
&\int_\Omega \na\big(A(\phi)q\big)\na\big(A(\phi)\zeta\big) \dx \leq \norm*{\na\big(A(\phi)q\big)}_2\norm*{\na\big(A(\phi)\zeta\big)}_2 \nonumber\\
&\hspace{11em} \leq  c\norm*{\na\big(A(\phi)q\big)}_2\(A_2\norm*{\na\zeta}_2 + \norm*{A^\prime}_\infty\norm*{\na\phi\zeta}_2\)\nonumber \\
&\hspace{11em} \leq c\norm*{\na\big(A(\phi)q\big)}_2\(A_2\norm*{\na\zeta}_2 + \norm*{A^\prime}_\infty\norm*{\zeta}_4\norm*{\na\phi}_4\)\\
&\int_0^T \norm*{\mathcal{A}^{q}(\phi,q)}^k_{H^{-1}} \dt\leq c \int_0^T \norm*{\na\big(A(\phi)q\big)}_2^k\( 1 + \norm*{\na\phi}_4^k \) \dt \nonumber\\
&\hspace{9em}\leq c\norm*{\na\big(A(\phi)q\big)}_{L^k(L^2)}^k + c\int_0^T \norm*{\na\big(A(\phi)q\big)}_2^k\norm*{\na\phi}_2^{k/2}\norm*{\Delta\phi}_2^{k/2} \dt \nonumber\\
&\hspace{9em}\leq c\norm*{\na\big(A(\phi)q\big)}_{L^k(L^2)}^k + c\norm*{\na\phi}_{L^\infty(L^2)}^k\norm*{\na\big(A(\phi)q\big)}^k_{L^{r}(L^2)}\norm*{\Delta\phi}^{k/2}_{L^{s}(L^2)},
\end{align}
where $r=ak, s= a^\star k/2$ with $\frac{1}{a}+\frac{1}{a^\star}=1$ with $a,a^\star \geq 1$. We can choose $a=3/2, a^\star=3$ to obtain $k=4/3$.

\begin{align}
\int_0^T \norm*{\mathcal{B}^{q}(\phi,q)}^2_{H^{-1}} \dt &\leq \int_0^T \norm*{\u}_4^2\norm*{q}_4^2 \dt \leq c\int_0^T\norm*{\u}_{2}\norm*{\nabla\u}_{2}\Big(\norm*{q}_2^2 + \norm*{q}_{2}\norm*{\nabla q}_{2} \Big) \dt \nonumber \\
& \leq c\norm*{\u}_{L^\infty(H)}\left(\norm*{q}_{L^\infty(L^2)}^2\norm*{\u}_{L^2(V)} +  \norm*{q}_{L^\infty(L^2)}\norm*{\u}^2_{L^2(V)}\norm*{\nabla q}_{L^2(L^2)}^2\right)\leq c
\end{align}
Further, we have
\begin{align}
&\int_\Omega n(\phi)\na\mu\na\big(A(\phi)\zeta\big) \dx \leq n_2\norm*{\na\mu}_2\( A_2^2\norm*{\zeta}_{H^1} + \norm*{A^\prime}^2_\infty\norm*{\na\phi}_4\norm*{\zeta}_4 \) \\
&\int_0^T \norm*{\mathcal{C}^{q}(\phi,\mu)}^k_{H^{-1}} \dt \leq c \int_0^T \norm*{\na\mu}_2^k\left(1 + \norm*{\na\phi}_4^k\right) \dt \nonumber \\
&\hspace{9em}\leq c \int_0^T \norm*{\na\mu}_2^k\left( \norm*{\na\phi}_2^{k/2}\norm*{\Delta\phi}_2^{k/2} + 1\right)\dt \nonumber \\
&\hspace{9em}\leq c \norm*{\na\phi}_{L^\infty(L^2)}^{k/2}\norm*{\na\mu}_{L^{ak}(L^2)}^k \norm*{\Delta\phi}^{k/2}_{L^{a^\star k/2}(L^2)} + \norm*{\na\mu}_{L^{k}(L^2)}^k.
\end{align}
where $a=3/2, a^\star=3$ and $k=4/3$.
\begin{align}
&\int_0^T \norm*{\mathcal{D}^{q}(\phi,q)}^2_{H^{-1}} \dt \leq c\frac{1}{\tau_1^2}\int_0^T\norm*{q}^2_2 \dt \leq  c\norm*{q}_{L^2(L^2)}^2 \label{eq:q_last}
\end{align}
The estimates (\ref{eq:q_op_pqm}) - (\ref{eq:q_last}) imply that $q^\prime \in L^{4/3}(0,T;H^{-1}(\Omega))$. Analogously as in (\ref{eq:chop}) due to the Lions-Aubin lemma we obtain the strong convergence of $q_m \rightarrow q$ in $L^2(0,T;L^p(\Omega))$ for every $p\in(1,\infty)$.

\subsection{The Navier-Stokes part}
We continue with the fluid equations $(\ref{eq:full_model})_3$ and define the following operators
\begin{align*}
&\mathcal{A}: H^1(\Omega)\times V \to V^\star  		& \inp*{\mathcal{A}(\phi,\u),\vv}&:=\int_\Omega \eta(\phi)\Du:\mathrm{D}\vv \dx  \\
&\mathcal{B}: V  \to V^\star  					& \inp*{\mathcal{B}\u,\vv}&:=\b(\u,\u,\vv) \\
&\mathcal{C}: L^2(\Omega)^{2\times 2} \to V^\star  		& \inp*{\mathcal{C}\TT,\vv}&:=\int_\Omega \TT:\na\vv \dx  \\
&\mathcal{D}: H^1(\Omega) \times H^1(\Omega) \to V^\star  	& \inp*{\mathcal{D}(\phi,\mu),\vv}&:=\int_\Omega \phi\na\mu\cdot\vv \dx
\end{align*}
We can rewrite the velocity equation $(\ref{eq:weak_sol_sys})_3$ in the following operator form
\begin{equation}
\u^{\prime} =  -\mathcal{A}(\phi,\u) - \mathcal{B}\u - \mathcal{C}\TT   - \mathcal{D}(\phi,\mu), \label{nsop}
\end{equation}
where

\begin{align}
\int_0^T \norm*{\mathcal{A}\u}_{V^\star}^2 \dt &\leq c\eta_2 \int_0^T \norm*{\na\u}_{2}^2 \dt \leq c\norm*{\u}^2_{L^2(V)}\leq c \\
\int_0^T \norm*{\mathcal{B}\u}_{V^\star}^2 \dt &\leq c \int_0^T \norm*{\u}_{2}^2\norm*{\na\u}_{2}^2 \dt \leq c\norm*{\u}_{L^\infty(H)}^2\norm*{\u}_{L^2(V)}^2 \leq c \\
\int_0^T \norm*{\mathcal{C}\TT}_{V^\star}^2 \dt &\leq \int_0^T \norm*{\TT}_{2}^2 \dt \leq c\norm*{\TT}^2_{L^2(L^2)}\leq c \\
\int_0^T \norm*{\mathcal{D}(\phi,\mu)}_{V^\star}^2 \dt &\leq c\int_0^T \norm*{\phi}_{4}^2\norm*{\na\mu}_{2}^2 \dt \nonumber \\
&\leq c\(\norm*{\phi}^2_{L^\infty(L^2)} + \norm*{\phi}_{L^\infty(L^2)}\norm*{\na\phi}_{L^\infty(L^2)}\)\norm*{\na\mu}_{L^2(L^2)}^2 \leq c.\label{embns}
\end{align}
Finally, (\ref{nsop}) - (\ref{embns}) imply $\u^{\prime} \in L^2(0,T;V^*)$ and by means of the Lions-Aubin lemma we get the compact embedding into $L^2(0,T;L^4_{\text{div}}(\Omega))$.

%----------------------------------------------------------------------------------------------------------------------------------------------------------------

\subsection{Conformation tensor}
Finally, we define the operators for the viscoelastic part $(\ref{eq:weak_sol_sys})_4$ of our model
\begin{align*}
&\mathcal{A}^{P}:  W \to W^\star  	& \inp*{\mathcal{A}^{P}(\CC),\DD}&:=\varepsilon\int_\Omega \na\CC:\na\DD \dx \\
&\mathcal{B}^{P}: V\times W \to W^\star         & \inp*{\mathcal{B}^{P}(\u,\CC),\DD}&:= \BB(\u,\CC,\DD) \\
&\mathcal{O}^{P}: V\times W \to W^\star         & \inp*{\mathcal{O}^{P}(\u,\CC),\DD}&:= \int_\Omega\Big[(\na\u)\CC + \CC(\na\u)^T\Big]:\DD \dx \\
&\mathcal{T}_1^{P}: H^1(\Omega)\times W \to W^\star   & \inp*{\mathcal{T}_1^{P}(\phi,\CC),\DD}&:= \int_\Omega h(\phi)\tr{\CC}\,\I:\DD \dx \\
&\mathcal{T}_2^{P}: H^1(\Omega)\times W \to W^\star   & \inp*{\mathcal{T}_2^{P}(\phi,\CC),\DD}&:= \int_\Omega h(\phi)\tr{\CC}^{2}\CC:\DD \dx
\end{align*}

\begin{equation}
\CC^{\prime} = - \mathcal{A}^{P}(\CC) - \mathcal{B}^{P}(\u,\CC) + \mathcal{O}^{P}(\u,\CC)  - \mathcal{T}_2^{P}(\phi,\CC) + \mathcal{T}_1^{P}(\phi,\CC), \label{eq:peterlinop}
\end{equation}
where

\begin{align}
&\int_0^T \norm*{\mathcal{A}^P\u}_{W^\star}^2 	 \dt      \leq c \int_0^T \norm*{\na\CC}_{L^2}^2 \dt  \leq c\norm*{\CC}_{L^2(W)}^2                            \leq c, \\
&\int_0^T \norm*{\mathcal{B}^P(\u,\CC)}_{W^\star}^2 \dt  \leq c \int_0^T \norm*{\u}_{2}\norm*{\na\u}_{2}\norm*{\CC}^2_{4} \dt \leq c\norm*{\u}_{L^\infty(H)}\norm*{\u}^2_{L^2(V)}\norm*{\CC}^2_{L^4(L^4)} \leq c, \\
&\int_0^T \norm*{\mathcal{O}^P(\u,\CC)}_{W^\star}^{4/3} \dt \leq c\int_0^T \norm*{\na\u}_{2}^{4/3}\norm*{\CC}_{4}^{4/3} \dt         \leq c\norm*{\u}^{4/3}_{L^2(V)}\norm*{\CC}^{4/3}_{L^4(L^4)}                \leq c, \\
&\int_0^T\norm*{\mathcal{T}_1^{P}(\phi,\CC)}^2_{W^\star} \dt \leq ch_2^2\int_0^T \norm*{\tr{\CC}} \dt \leq c\norm*{\tr{\CC}}_{L^2(L^2)} \leq c,\\
& \int_\Omega h(\phi)\tr{\CC}^2\CC:\DD \dx \leq c\norm*{\tr{\CC}}_4^2\norm*{\CC}_4\norm*{\DD}_4 \leq c\norm*{\tr{\CC}}_4^2\norm*{\CC}_4\norm*{\DD}_W,\\
& \int_0^T\norm*{\mathcal{T}_2^{P}(\phi,\CC)}^{k}_{W^\star} \dt \leq c\int_0^T \norm*{\tr{\CC}}_4^{2k}\norm*{\CC}_4^k \dt \leq c\norm*{\tr{\CC}}_{L^{2ka}(L^4)}^{2k}\norm*{\CC}_{L^{ka^\star}(L^4)}^k \leq c. \label{eq:peterlin_emb}
\end{align}
We need to find $k,a,a^\star$ such that $2ka \leq 4, ka^\star \leq 4, \frac{1}{a}+\frac{1}{a^\star}=1$ and $k > 1$. Choosing $a=3/2, a^\star=3$ we find that $k\leq 4/3$.
Combining (\ref{eq:peterlinop}) - (\ref{eq:peterlin_emb}) we obtain that $\CC^\prime \in L^{4/3}(0,T;W^\star)$. Due to the Lions-Aubin we obtain the compact embedding of $\CC_m$ into $L^2(0,T;L^p(\Omega))$ for every $p\in(1,\infty)$.

%----------------------------------------------------------------------------------------------------------------------------------------------------------------

\subsection{Convergence of subsequences}

Due to the uniform boundedness and the compact embeddings derived above we get the following convergence results for suitable subsequences
\begin{align}
&\phi_m \rightharpoonup^{\star} \phi \text{ in } L^\infty(0,T;H^1(\Omega)) 					&q_m &\rightharpoonup^{\star} q \text{ in } L^\infty(0,T;L^2(\Omega)) \nonumber\\
&\phi_m \rightharpoonup \phi \text{ in } L^2(0,T;H^2(\Omega)) 			   					&q_m &\rightharpoonup q \text{ in } L^2(0,T;H^1(\Omega)) \nonumber\\
&\phi^{\prime}_m \rightharpoonup \phi^{\prime} \text{ in } L^2(0,T;H^{-1}(\Omega))       	&q^{\prime}_m &\rightharpoonup q^{\prime} \text{ in } L^{4/3}(0,T;H^{-1}(\Omega)) \nonumber\\
&\phi_m \rightarrow \phi \text{ in } L^2(0,T;H^1(\Omega)) 						&q_m &\rightarrow q \text{ in } L^2(0,T;L^p(\Omega)) \nonumber\\
&\mu_m \rightharpoonup \mu \text{ in } L^2(0,T;H^1(\Omega)) 			&A(\phi_m)q_m &\rightharpoonup A(\phi)q \text{ in } L^2(0,T;H^1(\Omega)) \nonumber\\
&\u_m \rightharpoonup^{\star} \u \text{ in } L^\infty(0,T;H) 					&\CC_m & \rightharpoonup^{\star} \CC \text{ in } L^\infty(0,T;L^2(\Omega)) \nonumber\\
&\u_m \rightharpoonup \u \text{ in } L^2(0,T;V) 			   					&\CC_m & \rightharpoonup \CC \text{ in } L^2(0,T;W)  \nonumber\\
&\u^{\prime}_m \rightharpoonup \u^{\prime} \text{ in } L^2(0,T;V^\star)       	&\CC^{\prime}_m & \rightharpoonup \CC^{\prime} \text{ in } L^{4/3}(0,T;W^\star) \nonumber\\
&\u_m \rightarrow \u \text{ in } L^2(0,T;L^4_{\text{div}}(\Omega)) 						&\CC_m & \rightarrow \CC \text{ in } L^2(0,T;L^p(\Omega)) ,\; 1 < p < \infty. \label{eq:sw_conv}
\end{align}
Applying the weak continuity results from \cite{Temam.1983} or the Lions-Aubin-Simons Lemma \ref{lema:lionsaubin} we obtain also continuity in time, i.e.
\begin{align}
\u_m &\rightharpoonup \u \text{ in } C([0,T),L^2_{\text{div}}(\Omega)), &  \phi_m &\rightharpoonup \phi \text{ in } C([0,T),H^{1/2}(\Omega)), \nonumber \\
q_m &\rightharpoonup q \text{ in } C([0,T),H^{-1}(\Omega)),   &  \CC_m &\rightharpoonup \CC \text{ in } C([0,T),W^*).  \label{eq:continuity_time}
\end{align}

%----------------------------------------------------------------------------------------------------------------------------------------------------------------

\section{Limit passing}
Our aim now is to pass to the limit as $m\to\infty$ in the Galerkin approximation (\ref{eq:disc_weak}).
To this end we multiply (\ref{eq:disc_weak}) by a time test function $\varphi(t)\in L^\infty(0,T)$ and integrate over $(0,T)$. Then we pass to the limit in each term and show that the limits identified in (\ref{eq:sw_conv}) satisfy the weak formulation (\ref{eq:weak_sol_sys}). We will concentrate only on nonlinear terms since the linear ones follow directly from the weak convergences in (\ref{eq:sw_conv}).

\subsection{The Cahn-Hilliard part}
We first consider the terms from the Cahn-Hilliard part $(\ref{eq:disc_weak})_1$. We start with the main operator of the Cahn-Hilliard part
\begin{equation}
\int_0^T \int_\Omega \Big(m(\phi_m(t))\na\mu_m(t) - m(\phi(t))\na\mu(t)\Big)\na\psi\varphi(t) \dx\dt. \label{eq:limit_mobility}
\end{equation}
If we know that $m(\phi_m)\na\mu_m $ converges weakly to $m(\phi)\na\mu $ in $L^2(0,T;L^2(\Omega))$ then it is easy to see that the integral (\ref{eq:limit_mobility}) vanishes for $m\to\infty$.  First it follows from (\ref{eq:sw_conv}) that $\phi_m$ converges strongly to $\phi$ in $L^2(0;T;L^2(\Omega))$ and therefore it converges almost everywhere in $(0,T)\times\Omega$. \\ Combining this with the continuity of $m$ we obtain convergence almost everywhere in $(0,T)\times\Omega$ of $m(\phi_m)$ to $m(\phi)$. Due to assumption (\ref{eq:coeff2}) $m(\phi_m)$ is bounded in $L^\infty((0,T)\times\Omega)$. By applying Lemma \ref{lema:weakproduct} we obtain the weak convergence of $m(\phi_m)\nabla\mu_m$ to $m(\phi)\nabla\mu$ in $L^2(0,T;L^2(\Omega))$.\\[0.5em]
Now we consider the convective term
\begin{align}
& \int_0^T \int_\Omega \Big(\u_m(t)\na\phi_m(t) - \u(t)\na\phi(t)\Big)\psi\varphi(t) \dx\dt \nonumber\\
=& \int_0^T \int_\Omega \Big(\u_m(t) - \u(t)\Big)\na\phi_m(t)\psi\varphi(t) \dx\dt + \int_0^T \int_\Omega \Big(\na\phi_m(t)-\na\phi(t)\Big)\u(t)\psi\varphi(t) \dx\dt \label{eq:conv_limit}\\
\leq& \max\limits_{t\in(0,T)}\snorm*{\varphi(t)}\norm{\psi}_4\bigg[\int_0^T \norm*{\u_m(t)-\u(t)}_4\norm*{\na\phi_m(t)}_2 \dt +  \int_0^T \norm*{\na\phi_m(t)-\na\phi(t)}_2\norm*{\u(t)}_4 \dt\bigg]\nonumber \\
\leq& \,c\norm{\psi}_{H^1}\left( \norm*{\u_m(t) - \u(t)}_{L^2(L^4)}\norm*{\na\phi_m(t)}_{L^2(L^2)} + \norm*{\na\phi_m(t) - \na\phi(t)}_{L^2(L^2)}\norm*{\u(t)}_{L^2(V)}  \right) \longrightarrow 0 \nonumber.
\end{align}
The first term goes to zero due to the strong convergence of $\u_m$ in $L^2(0,T;L_{\text{div}}^4(\Omega))$. The second term converges due to the strong convergence of $\nabla\phi_m$ in $L^2(0,T;L^2(\Omega))$.\\[0.5em]
Now we consider the nonlinear coupling term with the bulk stress.
\begin{align*}
&\int_0^T\int_\Omega \Big(n(\phi_m(t))\na\big(A(\phi_m(t))q_m(t)\big)-n(\phi(t))\na\big(A(\phi(t))q(t)\big)\Big)\na\psi\varphi(t) \dx\dt
\end{align*}
Analogously to (\ref{eq:limit_mobility}) we can treat this term by using Lemma \ref{lema:weakproduct}. Here we  explain why the weak limit of $\na \big(A(\phi_m)q_m\big)$ is $\na \big(A(\phi)q\big)$. To this end, we calculate the gradient
\begin{align}
\na \big( A(\phi_m(t))q_m(t)\big) = A(\phi_m(t))\nabla q_m(t) + A^\prime(\phi_m(t))\nabla\phi_m(t)q_m(t). \label{eq:weak_limit_a}
\end{align}
The first term converges weakly to $\big(A(\phi)\nabla q\big)$ by analogous arguments as in (\ref{eq:limit_mobility}).  For the second term we recall that $\na\phi_mq_m$ is bounded in $L^2(0,T;L^2(\Omega))$ and therefore weakly converging to some limit $f\in L^2(0,T;L^2(\Omega))$. By calculations one can show that $\na\phi_mq_m$ converges strongly in $L^1(0,T;L^1(\Omega))$ and due to the uniqueness of the limit we can identify $f=\na\phi_mq_m$. Therefore the second term of (\ref{eq:weak_limit_a}) converges weakly to $A^\prime(\phi)\nabla\phi q$ in $L^2(0,T;L^2(\Omega))$.\\[0.5em]
All terms arising from $\b(\u,\vv,\mathbf{w})$ or $\BB(\u,\CC,\DD)$ can be treated in a similar way as (\ref{eq:conv_limit}).

%----------------------------------------------------------------------------------------------------------------------------------------------------------------

\subsection{Bulk Stress}
The convergence in the relaxation term
\begin{align*}
&\int_0^T\int_\Omega \(\frac{q_m(t)}{\tau(\phi_m(t))} - \frac{q(t)}{\tau(\phi(t))}\)\zeta\varphi(t) \dx\dt
\end{align*}
can be shown analogously as in (\ref{eq:limit_mobility}). \\[0.5em]
Next we consider the nonlinear diffusion term of the bulk stress equation
\begin{align}
&\int_0^T\int_\Omega \na\Big(A(\phi_m(t))q_m(t)\Big)\na\Big(A(\phi_m(t))\zeta\Big)\varphi(t) - \na\Big(A(\phi(t))q(t)\Big)\na\Big(A(\phi(t))\zeta\Big)\varphi(t) \dx\dt \nonumber\\
= &\int_0^T\int_\Omega \na\Big(A(\phi_m(t))q_m(t)\Big)\na\Big(A(\phi_m(t))\zeta-A(\phi(t))\zeta\Big)\varphi(t) \dx\dt \label{eq:q_limit1} \\
+ &\int_0^T\int_\Omega  \na\Big(A(\phi_m(t))q_m(t)-A(\phi(t))q(t)\Big)\na\Big(A(\phi(t))\zeta\Big)\varphi(t) \dx\dt. \nonumber
\end{align}
The second integral is straightforward due to the weak convergence of $\na\big( A(\phi_m)q_m\big)$ in \linebreak $L^2(0,T;L^2(\Omega))$. Convergence of the first integral term follows directly, if we know that $\na\big(A(\phi_m)\zeta \big)$ converges strongly in $L^2(0,T;L^2(\Omega))$. Due to the Sobolev embedding $\na\phi_m$ converges strongly in $L^2(0,T;L^3(\Omega))$. Calculating $$\na\big(A(\phi_m(t))\zeta\big)=A(\phi_m(t))\na\zeta + A^\prime(\phi_m(t))\na\phi_m\zeta$$
we see that both terms converge strongly in $L^2(0,T;L^2(\Omega))$ using the generalized dominated convergence theorem. Combining this with the weak convergence of $\na\big(A(\phi_m)q_m\big)$ in $L^2(0,T;L^2(\Omega))$ the first integral vanishes as $m\to\infty$.\\[0.5em]

Furthermore, we control the coupling term in the following way
\begin{align*}
&\int_0^T\int_\Omega \bigg( n(\phi_m(t))\na\mu_m(t)\na\Big(A(\phi_m(t))\zeta\Big) -  n(\phi(t))\na\mu(t)\na\Big(A(\phi(t))\zeta\Big) \bigg)\varphi(t) \dx\dt \\
= &\int_0^T\int_\Omega n(\phi_m(t))\na\mu_m(t)\na\Big(A(\phi_m(t))\zeta - A(\phi(t))\zeta\Big)\varphi(t) \dx\dt \\
+ &\int_0^T\int_\Omega  \Big(n(\phi_m(t))\na\mu_m(t) - n(\phi(t))\na\mu(t)\Big)\na\Big(A(\phi(t))\zeta\Big)\varphi(t) \dx\dt.
\end{align*}
The first integral goes to zero due to the strong convergence of $\na\big(A(\phi_m)\zeta\big)$ in $L^2(0,T;L^2(\Omega))$. The second integral term vanishes due to the weak convergence of $n(\phi_m)\na\mu_m$ in $L^2(0,T;L^2(\Omega))$.

%----------------------------------------------------------------------------------------------------------------------------------------------------------------

\subsection{Chemical Potential}
The only nonlinear term in the chemical potential $\mu$, cf. (\ref{eq:full_model}), is
\begin{align}
\int_0^T\int_\Omega \Big(f(\phi_m(t)) - f(\phi(t))\Big)\xi\varphi(t) \dx\dt. \label{eq:mu_mean_val}
\end{align}
Applying the mean value theorem, we have for $z\in[\phi_m(t),\phi(t)]$ or $z\in[\phi(t),\phi_m(t)],$ and $ r,q,s > 1,$ $\frac{1}{r}+\frac{1}{q}+\frac{1}{s}=1$  that (\ref{eq:mu_mean_val})
\begin{align}
=& \int_0^T\int_\Omega f^\prime(z(t))\Big(\phi_m(t)-\phi(t)\Big)\xi\varphi(t) \dx\dt \nonumber \\
&\leq \max\limits_{t\in(0,T)}|\varphi(t)|\norm*{\xi}_r\int_0^T \norm*{f^\prime(z(t))}_q\norm*{\phi_m(t)-\phi(t)}_s \dt \label{eq:mean_mu_z}.
\end{align}
Taking into account (\ref{eq:coeff4}) we know that
\begin{equation}
\snorm*{f^\prime(z)} = \snorm*{F^{\prime\prime}(z)}\leq c_5\snorm*{z}^{p-2} + c_6, \label{eq:mean_pot_app}
\end{equation}
therefor we can estimate (\ref{eq:mean_mu_z}) as follows
\begin{align*}
(\ref{eq:mean_mu_z}) &\leq c\norm*{\xi}_{H^1}\int_0^T \(\norm*{z(t)}_{q(p-2)}^{p-2}\)\norm*{\phi_m(t)-\phi(t)}_s \dt \\
&\leq c\int_0^T \(\norm*{\phi_m(t)}_{q(p-2)}^{p-2} + \norm*{\phi(t)}_{q(p-2)}^{p-2}\)\norm*{\phi_m(t)-\phi(t)}_s \dt \\
&\leq c \(\norm*{\phi_m}_{L^\infty(L^{q(p-2)})}^{p-2} + \norm*{\phi}_{L^\infty(L^{q(p-2)})}^{p-2}\)\norm*{\phi_m-\phi}_{L^2(L^s)} \longrightarrow 0.
\end{align*}
This goes to zero due to the strong convergence of $\phi_m$ in $L^2(0,T;L^s(\Omega))$ for every $s\in(1,\infty)$ and the boundedness of $\phi_m,\phi \in L^\infty(0,T;H^1(\Omega))$.

%----------------------------------------------------------------------------------------------------------------------------------------------------------------

\subsection{The Navier-Stokes part}
We focus on the terms from the Navier-Stokes part $(\ref{eq:disc_weak})_4$. First, we see that
\begin{align*}
&\int_0^T\int_\Omega \Big(\eta(\phi_m(t))\na\u_m(t)-\eta(\phi(t))\na\u(t)\Big):\na\vv\varphi(t) \dx \dt
\end{align*}
goes to zero due to the weak convergence of $\na\u_m$ in $L^2(0,T;V)$ and the strong convergence of $\eta(\phi_m)\na\vv\varphi(t)$ in $L^2(0,T;L^2(\Omega))$, analogously as in (\ref{eq:limit_mobility}). Therefore, we can also pass to the limit for the formulation with $\Du:\mathrm{D}\vv$. \\[0.5em]
We turn now our attention to the stress term in the Navier-Stokes part arising from the coupling to the Cahn-Hilliard part
\begin{align*}
&\int_0^T\int_\Omega \big(\phi_m(t)\na\mu_m(t) - \phi(t)\na\mu(t)\big)\cdot\vv\varphi(t) \dx\dt \\
=& \int_0^T\int_\Omega \phi_m(t)\big(\na\mu_m(t) - \na\mu(t)\big)\cdot\vv\varphi(t) \dx\dt + \int_0^T\int_\Omega \big(\phi_m(t) - \phi(t)\big)\na\mu_m(t)\cdot\vv\varphi(t) \dx\dt
\end{align*}
The first integral term goes to zero due to the weak convergence of $\na\mu_m\in L^2(0,T;L^2(\Omega))$, the strong convergence of $\phi_m\in L^2(0,T;H^1(\Omega))$ and the fact that $\vv\varphi(t)\in L^\infty(0,T;H^1(\Omega))$. The second integral term vanishes due to the strong convergence of $\phi_m\in L^2(0,T;L^p(\Omega))$, for any $p\in(1,\infty)$.  \\[0.5em]
The last term in the Navier-Stokes equation is the stress term arising from the viscoelastic contribution
\begin{align*}
&\int_0^T\int_\Omega \Big(\TT_m(t)-\TT(t)\Big):\na\vv\varphi(t) \dx \dt \leq c\max\limits_{t\in(0,T)}|\varphi(t)|\norm*{\vv}_{V}\int_0^T\norm*{\TT(t)_m-\TT(t)}^2_{L^2} \dt \\
&\leq c\left[\norm*{\tr{\CC_m}}_{L^2(L^4)}\norm*{\CC_m -\CC}_{L^2(L^4)} + \norm*{\tr{\CC_m}-\tr{\CC}}_{L^2(L^4)}\norm*{\tr{\CC}}_{L^2(L^4)}\right] \longrightarrow 0 \text{ as } m\to\infty
\end{align*}
where we have used the strong convergence of $\CC_m, \tr{\CC_m}\in L^2(0,T;L^4(\Omega))$.
%----------------------------------------------------------------------------------------------------------------------------------------------------------------
\subsection{Conformation tensor}
Finally, we consider the terms of the viscoelastic part $(\ref{eq:disc_weak})_5$.
We present only one part of the upper convective term and the nonlinear term, all other terms are treated analogously. We can write
\begin{equation}
(\na\u_m)\CC_m - (\na\u)\CC = \na\u_m\left(\CC_m -\CC \right) + \na\left(\u_m-\u \right)\CC \label{eq:upper_conv}
\end{equation}
and find that
\begin{align*}
&\int_0^T\int_\Omega \Big(\na\u_m(t)\CC_m(t) - \u(t)\CC(t)\Big):\DD\varphi(t) \dx\dt \\
&\leq c\max\limits_{t\in(0,T)} |\varphi(t)|\norm*{\DD}_{W}\int_0^T \norm*{\u_m(t)}_V\norm*{\CC_m(t)-\CC(t)}_{L^4} + \norm*{\u_m(t)-\u(t)}_{L^4}\norm*{\CC(t)}_{W} \dt\\
&\leq c \norm*{\DD}_W\left( \norm*{\u}_{L^2(V)}\norm*{\CC_m-\CC}_{L^2(L^4)} + \norm*{\u_m-\u}_{L^2(L^4)}\norm*{\CC}_W\right) \longrightarrow 0 \text{ as } m\to\infty
\end{align*}
due to the strong convergence of $\CC_m, \tr{\CC_m}\in L^2(0,T;L^4(\Omega))$.\\[0.3em]
Further, we rewrite the nonlinear term in the following way
\begin{align*}
\tr{\CC_m}^2\CC_m - \tr{\CC}^2\CC &= \tr{\CC}\tr{\CC_m-\CC}\CC + \tr{\CC_m}^2\left(\CC_m -\CC\right)+ \tr{\CC_m}\tr{\CC_m-\CC}\CC.
\end{align*}
This yields
\begin{align*}
&\int_0^T\int_\Omega \Big( \tr{\CC_m(t)}^2\CC_m(t) - \tr{\CC(t)}^2\CC(t)  \Big):\DD\varphi(t) \dx\dt \\
&\leq c\max\limits_{t\in(0,T)}|\varphi(t)|\norm*{\DD}_{L^4}\int_0^T \norm*{\tr{\CC_m(t)}-\tr{\CC(t)}}_{L^4}\norm*{\tr{\CC(t)}}_{L^4}\norm*{\CC(t)}_{L^4} \\
& \hspace{10em} + \norm*{\tr{\CC_m(t)}}_{L^4}^2\norm*{\CC_m(t)-\CC(t)}_{L^4}\\
& \hspace{10em} + \norm*{\CC_m(t)}_{L^4}\norm*{\tr{\CC_m(t)}-\tr{\CC(t)}}\norm*{\CC(t)}_{L^4} \dt \\
& \leq c \norm*{\DD}_{w}\left[ \norm*{\tr{\CC_m}-\tr{\CC}}_{L^2(L^4)}\norm*{\CC}_{L^4(L^4)}^2 + \norm*{\tr{\CC_m}}_{L^4(L^4)}^2\norm*{\CC_m-\CC}_{L^2(L^4)} \right. \\
& \left. \hspace{10em} + \norm*{\CC_m}_{L^4(L^4)}\norm*{\tr{\CC_m}-\tr{\CC}}_{L^2(L^4)}\norm*{\CC}_{L^4(L^4)}\right] \longrightarrow 0 \text{ as } m\to\infty.
\end{align*}	
Now, due to the continuity of $h(\phi)$, it is clear that $h(\phi_m)\tr{\CC_m(t)}^2\CC_m(t) \rightarrow  h(\phi)\tr{\CC(t)}^2\CC(t)$.
Consequently, we can pass to the limit in every term of the system and obtain the existence of a weak solution. This concludes the proof of existence.
%----------------------------------------------------------------------------------------------------------------------------------------------------------------
\subsection{Limiting process in the energy inequality}
In order to obtain the differential form of the energy inequality (\ref{eq:energy_full}) we need the following a priori estimates: $q^\prime \in L^2(0,T;H^{-1}(\Omega)), \CC^\prime \in L^2(0,T;W^*(\Omega)) $ and  $\na\phi^\prime \in L^2(0,T;H^{-1}(\Omega))$. Since we have no control on these terms we can only deduce the integrated inequality (\ref{eq:energy_full_int}). Let us denote
\begin{align}
E_m(t) &= \int_\Omega \frac{c_0}{2}\snorm*{\nabla\phi_m(t)}^2 + F(\phi_m(t)) + \frac{1}{2}|q_m(t)|^2 + \frac{1}{2}\snorm*{\u_m(t)}^2 + \frac{1}{4}\snorm*{\CC_m(t)}^2 \dx, \\
E(t) &= \int_\Omega \frac{c_0}{2}\snorm*{\nabla\phi(t)}^2 + F(\phi(t)) + \frac{1}{2}|q(t)|^2 + \frac{1}{2}\snorm*{\u(t)}^2 + \frac{1}{4}\snorm*{\CC(t)}^2 \dx,
\end{align}
Since $\nabla\phi_m, q_m, \u_m, \CC_m$ converge strongly in $L^2(0,T;L^2(\Omega))$ they also converge strongly for almost every $t\in(0,T)$ in $L^2(\Omega)$. Taking the difference between $E_m(t)$ and $E(t)$ we obtain two types of terms. The first one is of the form
\begin{align}
\int_\Omega \snorm*{g_m(t)}^2 - \snorm*{g(t)}^2 \dx &\leq \norm*{g_m(t)-g(t)}_2\norm*{g_m(t)+g(t)}_2 \nonumber\\
&\leq c \norm*{g_m(t)-g(t)}_2 \longrightarrow 0 \text{ as } m\to\infty. \label{eq:l2lim}
\end{align}
We use (\ref{eq:l2lim}) with $g$ being $\nabla\phi,q,\u$ or $\CC$. If $\norm*{F^\prime(z)}_2$ is bounded the second term is
\begin{equation}
\label{eq:weak_pot}
\int_\Omega F(\phi_m(t)) - F(\phi(t)) \dx \leq \norm*{F^\prime(z)}_2\norm*{\phi_m(t)-\phi(t)}_2 \longrightarrow 0 \text{ as } m\to\infty.
\end{equation}
Here we have used the mean value theorem, where $z=\lambda\phi_m(t)+(1-\lambda)\phi(t)$ denotes a convex combination of $\phi_m(t)$ and $\phi(t)$, $\lambda \in(0,1)$. Applying analogous calculations as in (\ref{eq:mu_mean_val}), (\ref{eq:mean_pot_app}) now  for $F^\prime, |F^\prime(z)|=|f(z)| \leq c_3|z|^{p-1} + c_4$ we find $\norm*{F^\prime(z)}_2\leq c$.\\

This implies that we can pass to the limit in $E_m(t)$ strongly in $L^1(\Omega)$ for almost every $t\in(0,T)$. \\[0.5em]
The convergence of $E_m(0)$ follows from the fact that the initial conditions $\na\phi_{m0},q _{m0},\u_{m0},\CC_{m0}$ converges strongly in the appropriate spaces. In detail $q _{m0},\u_{m0},\CC_{m0}$ are $L^2$-projections with $H^1$ basis functions so their $L^2$-norms converge strongly. However, $\phi_{m0}$ converges strongly in $H^1(\Omega)$, due to the projection using the eigenfunctions of the Laplace operator. Therefore, $\na\phi_{0m}$ converges strongly to $\na\phi_0$, see \cite{Necas.1996}. For the convergence of $\int_\Omega F(\phi_{0m})$ to $\int_\Omega F(\phi_{0})$ we can employ an analogous calculation as in (\ref{eq:weak_pot}).  \\

Integrating (\ref{eq:energy_chnsp_galerkin_full}) in time from $0$ to $t$ we obtain
\begin{align*}
\tilde{E}_m(t) &+(1-\kappa)\int_0^t\int_\Omega m(\phi_m(t^\prime))\snorm*{\nabla\mu_m(t^\prime)}^2\dta + (1-\kappa)\int_0^t\int_\Omega \snorm*{\nabla\big(A(\phi_m(t^\prime))q_m(t^\prime)\big)}^2 \dta \\
&+ \int_0^t\int_\Omega \frac{1}{\tau(\phi_m(t^\prime))}\snorm*{q_m(t^\prime)}^2\dta +\int_0^t\int_\Omega \eta(\phi_m(t^\prime))\snorm*{\Du_m(t^\prime)}^2\dta + \frac{\varepsilon}{2}\int_0^t\int_\Omega \snorm*{\nabla\CC_m(t^\prime)}^2\dta\\
& + \frac{1}{4}\int_0^t\int_\Omega h(\phi_m(t^\prime))\tr{\CC_m(t^\prime)}^4\dta -  \frac{1}{2}\int_0^t\int_\Omega h(\phi_m(t^\prime))\snorm*{\tr{\CC_m(t^\prime)}}^2\dta  \leq \tilde{E}_m(0).
\end{align*}
We also recall that due to the Fatou lemma for a weakly/weakly-$^\star$ convergent sequence $\{g_m\}$ we have
\begin{align*}
\norm*{g(t)}_2 \leq \norm*{g}_{L^\infty(0,t;L^2(\Omega))}&\leq \liminf\limits_{m\to\infty}\norm*{g_m}_{L^\infty(0,t;L^2(\Omega))}, \\
\norm*{g}_{L^2(0,t;L^2(\Omega))}&\leq \liminf\limits_{m\to\infty}\norm*{g_m}_{L^2(0,t;L^2(\Omega))}.
\end{align*}
Here we can choose $g$ as $\sqrt{m(\phi)}\nabla\mu, \sqrt{\eta(\phi)}\Du, \nabla\CC, \big(\tau(\phi)\big)^{-1/2}q, \nabla\big(A(\phi)q\big)$ or  $\sqrt{h(\phi)}\tr{\CC}^2 $. Similarly as in (\ref{eq:limit_mobility}) we can employ Lemma \ref{lema:weakproduct}, it implies that these terms converge at least weakly in $L^2(0,T;L^2(\Omega))$, due to the fact that the square root of a continuous positive function is continuous and the $L^\infty$-bound is preserved.
This allows us to pass to the limit in the dissipative terms. We continue with the term
\begin{equation}
\int_0^t\int_\Omega h(\phi_m(t^\prime))\snorm*{\tr{\CC_m(t^\prime)}}^2 \dx\dta,
\end{equation}
which will be treated in a different way. Indeed,
\begin{align*}
&\int_0^t\int_\Omega h(\phi_m(t^\prime))\snorm*{\tr{\CC_m(t^\prime)}}^2 - h(\phi)\snorm*{\tr{\CC(t^\prime)}}^2 \dx\dta \\
=&\int_0^t\int_\Omega h(\phi_m(t^\prime))\(\snorm*{\tr{\CC_m(t^\prime)}}^2 - \snorm*{\tr{\CC(t^\prime)}}^2\)  + \(h_m(\phi(t^\prime)) - h(\phi(t^\prime))\)\snorm*{\tr{\CC(t^\prime)}}^2 \dx\dta.
\end{align*}
The first integral term goes to zero due to (\ref{eq:l2lim}) and the second integral term vanishes due to the generalized dominated convergence theorem. Thus,  $h(\phi_m)\snorm*{\tr{\CC}}^2$ converges strongly in $L^2(0,T;L^2(\Omega))$.\\
Consequently, we can pass to the limit in each term and obtain (\ref{eq:energy_full_int}), i.e.
\begin{align}
&\left(\int_\Omega \frac{c_0}{2}\snorm{\nabla\phi(t)}^2 + F(\phi(t)) + \frac{1}{2}|q(t)|^2 +\frac{1}{2}\snorm{\u(t)}^2 + \frac{1}{4}\snorm*{\CC(t)}^2 \dx\right) \nonumber \\
&\leq -(1-\kappa)\int_0^t\int_\Omega m(\phi(t^\prime))\snorm*{\nabla\mu(t^\prime)}^2\dx\dta -(1-\kappa)\int_0^t\int_\Omega \snorm*{\na(A(\phi(t^\prime))q(t^\prime))}^2\dx\dta \nonumber \\
&-\int_0^t\int_\Omega \frac{1}{\tau(\phi(t^\prime))}q(t^\prime)^2 \dx\dta -\int_0^t\int_\Omega \eta(\phi(t^\prime))\snorm*{\Du(t^\prime)}^2\dx\dta - \frac{\varepsilon}{2}\int_0^t\int_\Omega\snorm*{\nabla\CC(t^\prime)}^2\dx\dta  \nonumber  \\
&- \frac{1}{4}\int_0^t\int_\Omega h(\phi(t^\prime))\snorm*{\trC(t^\prime)}^4\dx\dta + \frac{1}{2}\int_0^t\int_\Omega h(\phi(t^\prime))\snorm*{\trC(t^\prime)}^2\dx\dta \\
&+\left(\int_\Omega \frac{c_0}{2}\snorm{\nabla\phi(0)}^2 + F(\phi(0)) + \frac{1}{2}|q(0)|^2 +\frac{1}{2}\snorm{\u(0)}^2 + \frac{1}{4}\snorm*{\CC(0)}^2 \dx\right). \nonumber
\end{align}

%----------------------------------------------------------------------------------------------------------------------------------------------------------------

\section{Numerical Simulations}
In order to illustrate properties of our model for viscoelastic phase separation, we present results of some numerical experiments.
\subsection{Lagrange-Galerkin finite element method}
We start by describing a Lagrange-Galerkin finite element method for solving system (\ref{eq:full_model}) based on its weak formulation (\ref{eq:weak_sol_sys}).\\
We employ piecewise-linear finite elements for $(\phi,q,\mu,\u,p,\CC)$ in space and the characteristic scheme in time. Since the Cahn-Hilliard equation contains a fourth order operator and we have a saddle point problem in the Navier-Stokes part we have to use either stable spaces or apply suitable stabilization techniques, the so-called pressure stabilization, cf. \cite{LukacovaMedvidova.2017b}.\\[0.5em]
Let $\mathcal{T}_h$ be a regular and quasi-uniform triangulation of $\bar{\Omega}$, with $h_K$ the diameter of $K\in \mathcal{T}_h$, $h:=\max(h_K)$, $\rho_K:=\sup\{ \textrm{diam}(S): S\text{ is a ball contained in } K \}$. Thus, we have \cite{Ciarlet.2002}
\begin{equation}
\exists c>0 \quad \forall K\in \bigcup_h\mathcal{T}_h,\; \frac{h_K}{p_K}\leq \sigma, \quad\quad\forall K\in \bigcup_h\mathcal{T}_h,\; \frac{h}{h_K}\leq c.
\end{equation}
We define the discrete spaces
\begin{align*}
M_h&:=\{\psi_h\in C(\bar{\Omega}): \psi_h\mid_K \in \mathcal{P}_1(K), \,\forall K\in \mathcal{T}_h\},\quad \Psi_h:= M_h\cap H^1(\Omega),\quad Q_h = M_h \cap L^2_0(\Omega),\\
X_h&:=\{\vv_h\in C(\bar{\Omega}): \vv_h\mid_K \in \mathcal{P}_1(K)^2, \forall K\in \mathcal{T}_h\},\quad  V_h:=X_h\cap V,\\
Y_h&:=\{\DD_h\in C_{sym}(\bar{\Omega})^{2\times 2}: \DD_h\mid_K \in \mathcal{P}_1(K)_{sym}^{2\times 2}, \forall K\in \mathcal{T}_h\},\quad  W_h:=Y_h\cap W.
\end{align*}

Here we will shortly describe the characteristic scheme that has been employed in \cite{LukacovaMedvidova.2017b,Notsu.2015} for similar fluid flow problems. Let $\Delta t$ be a time increment, $N_T=T/\Delta t $ the total number of time steps and $t^n=n\Delta T$. We consider the first order characteristic approximation of the material derivative
\begin{align*}
\frac{Dg}{Dt} = \frac{\d g}{\dt} + \u\na g, \quad\quad  \frac{Dg}{Dt}(x,t^{n+1})&= \frac{g^{n+1}(x) - (g^{n} \circ X_1^{n+1})(x)}{\Delta t} + \mathcal{O}(\Delta t),\\
(g^{n} \circ X_1^{n+1})(x)&=g^{n}(X_1^{n+1}(x)),
\end{align*}
where $X_1^{n+1}:\Omega\to\mathbb{R}^2$ is a mapping defined by
\begin{equation*}
X_1^{n+1}(x)=x-\u^{n}(x)\Delta t.
\end{equation*}
We set $\phi^{n+1/2}:=(\phi^{n+1}+\phi^n)/2,\, q^{n+1/2}:=(q^{n+1}+q^n)/2$.

In order to solve numerically our viscoelastic phase separation model (\ref{eq:full_model}) we apply the operator splitting and solve in the first step $(\phi,q,\mu)$ system $(\ref{eq:full_model})_{1,2,5}$ and in the second step the Navier-Stokes-Peterlin equations $(\ref{eq:full_model})_{3,4}$.\\
\newpage
\textbf{Step 1:}\\
For given $(\phi_h^{n},q_h^{n},\mu^n_h,\u_h^{n},p_h^{n},\CC_h^{n})\in(\Psi_h)^3\times V_h \times Q_h\times W_h$ find $(\phi_h^{n+1},q_h^{n+1},\mu_h^{n+\frac{1}{2}})\in (\Psi_h)^3$ such that:
\begin{tcolorbox}
	\begin{align*}
	&\( \frac{\phi_h^{n+1} - \phi_h^{n} \circ X_1^{n+1}}{\Delta t},\psi_h \) +  \( m(\phi_h^{n})\nabla\mu_h^{n+1/2},\nabla\psi_h \) + \Delta t \((\phi^{n}_h)^2\nabla\mu_h^{n+1/2},\na\psi_h \)  \\
	&\hspace{8cm} - \(n(\phi_h^{n})\nabla\big(A(\phi_h^{n})q^{n+1/2}\big),\nabla\psi_h \) = 0 &\\
	&\( \frac{q_h^{n+1} - q_h^{n} \circ X_1^{n+1}}{\Delta t},\psi_h \)+ \(\nabla\big(A(\phi_h^{n})q_h^{n+1/2}\big) ,\nabla\big(A(\phi_h^{n})\psi_h\big)\)  \\
	&\hspace{4.5cm} +\(\(\tau(\phi_h^{n})\)^{-1}q_h^{n+1/2},\psi_h\)- \(n(\phi_h^{n})\nabla\mu_h^{n+1/2},\nabla\big(A(\phi_h^{n})\psi_h\big)\) = 0& \\
	&\(\mu_h^{n+1/2},\psi_h\)  - \(\nabla\phi^{n+1/2},\na\psi_h\) - \(f(\phi_h^{n},\phi_h^{n+1}) ,\psi_h\)=0   &
	\end{align*}
\end{tcolorbox}
Let us point out that step 1 is based on a mixed finite element method for $\phi$ and $\mu$, see \cite{Diegel.2016}, and the Taylor approximation $f(\phi_h^{n+1})\approx f(\phi_h^n,\phi_h^{n+1}) := f(\phi_h^n) + \frac{1}{2}\(\phi_h^{n+1}-\phi_h^n\)f^\prime(\phi_h^n)$, cf. \cite{Strasser.2018}. Moreover, we have applied the first order correction of the convective velocity in the Cahn-Hilliard equation by introducing $\u^\star = \u^n - \Delta t\phi_h^n\na\mu_h^{n+1/2}$ to stabilize the operator splitting, see \cite{Strasser.2018}.\\[0.2cm]

\noindent\textbf{Step 2:}\\
For given $(\phi_h^{n+1},q_h^{n+1},\mu_h^{n+\frac{1}{2}},\u_h^{n},p_h^{n},\CC_h^{n})\in (\Psi_h)^3\times V_h \times Q_h\times W_h$ find  $(\u_h^{n+1},p_h^{n+1},\CC_h^{n+1})\in V_h\times Q_h \times W_h$ such that:
\begin{tcolorbox}
	\begin{align*}
	\( \frac{\u^{n+1}_h - \u^{n}_h \circ X_1^{n+1}}{\Delta t},\vv_h \) &+  \(\eta(\phi_h^{n+1/2}) \Du^{n+1}_h, \mathrm{D}\vv_h \)   - \(\di \vv_h,p^{n+1}_h \)\\
	&\hspace{2.cm} - \(\di \u^{n+1}_h,\psi_h \)  - \delta_0 \sum_{K \in \mathcal{T}_h} h_K^2 \( \nabla p_h^{n+1}, \nabla \psi_h \)_K  \\
	&\hspace{2.cm} + \(\tr{\CC^{n+1}_h} \,\widetilde{\CC^{n+1}_h},\nabla \vv_h\)	 + \(\phi_h^{n}\nabla\mu_h^{n+1/2},\vv_h\)=0 & \\
	\( \frac{\CC^{n+1}_h-\CC^{n}_h \circ X_1^{n+1}}{\Delta t},\DD_h\) &+ \varepsilon \(\nabla\CC^{n+1}_h,\nabla\DD_h\)  -2\((\nabla\u^{n+1}_h)\widetilde{\CC^{n+1}_h},\DD_h\) +  \\
	&\hspace{-1.5cm}+ \(h(\phi_h^{n+1/2})\widetilde{\tr{\CC^{n+1}_h}^2} \CC^{n+1}_h,\DD_h\)- \(h(\phi_h^{n+1/2})\widetilde{\tr{\CC^{n+1}_h}} \, \I,\DD_h\)=0   &
	\end{align*}
\end{tcolorbox}
Note that in step 2 we have applied the Brezzi-Pitkaränta  pressure stabilization \cite{Brezzi.} for the Navier-Stokes equation. Further, we have an inner fixed-point iteration to iterate nonlinear terms with respect to $\CC$. We denote by $\widetilde{g}$ the function which will be replaced by the old inner iteration and we iterate equations in step 2 until the relative norm differences of the iterations is less than a given tolerance.\\
It has been proven in \cite{Strasser.2018} that the above splitting scheme indeed satisfies the discrete dissipation at each time step for the classical Oldroyd-B model instead of $(\ref{eq:full_model})_4$. In our case it would mean
\begin{equation}
E^{n+1} = \sum_{K\in\mathcal{T}_h} \int_K \frac{c_0}{2}\snorm*{\na\phi_h^{n+1}}^2 + F(\phi^{n+1}_h) + \frac{1}{2}\snorm*{q_h^{n+1}}^2 + \frac{1}{2}\snorm*{\u_h^{n+1}}^2 + \frac{1}{4}\snorm*{\trC_h^{n+1}}^2 - \frac{1}{2}\trr{\ln(\CC_h^{n+1})} \leq E^n \label{eq:dic_en}
\end{equation}
as soon as $$\sum_{K\in\mathcal{T}_h}\int_K \frac{F(\phi^{n+1}_h)-F(\phi_h^n)}{\Delta t} - f(\phi_h^{n+1},\phi_h^n)\frac{\phi_h^{n+1} - \phi_h^n}{\Delta t} \dx\leq 0.$$
A rigorous proof  will be an interesting question for future study. Nevertheless, our numerical experiments indicate that the discrete energy dissipation property  (\ref{eq:dic_en}) indeed holds also for (\ref{eq:full_model}) with the Peterlin model.
%----------------------------------------------------------------------------------------------------------------------------------------------------------------

\subsection{Numerical Experiments}
The aim of this section is to illustrate complex behaviour of our viscoelastic phase separation model (\ref{eq:full_model}) in the context of spinodal decomposition.
Instead of the classical Ginzburg-Landau potential (\ref{eq:ginz}) we work with a modified Ginzburg-Landau potential (\ref{eq:mod_ginz}) for the following reasons. In the spinodal decomposition the potential usually has two minima which corresponds to the equilibrium volume fractions of the two phases. Physically relevant Flory-Huggins potential (\ref{eq:fhpot}) attains both minima $a,b$ inside $[0,1]$ and not at the boundary of $[0,1]$. Therefore, we have opted for the modification
\begin{equation*}
F_{mod}(\phi) = (\phi-a)^2(\phi-b)^2 \label{eq:mod_ginz}
\end{equation*}
with $a=0.134791$ and $b=1-a$. These values are consistent with Flory-Huggins potential (\ref{eq:fhpot}) for $n_p=n_s=1 \text{ and } \chi=28/11$. In general, for $a=1, b=0$ it may happen that $\phi$ attains values outside of $[0,1]$ due to the roundoff errors.\\

We triangulate a computational domain $\Omega=[0,128]^2$ into $128^2$ isometric triangles.
The following functions and parameters will be used
\begin{align*}
&m(\phi)=\phi^2(1-\phi)^2, n(\phi)=\phi(1-\phi), A(\phi)=\frac{3}{2} + \frac{1}{2}\tanh\(10^{3}\cdot\left[\cot(\pi\phi^*)- \cot(\pi\phi)\right] \),\\
&\tau(\phi)=10\phi^2, h(\phi)=(5\phi^2)^{-1}, \eta(\phi) = 1-\phi^2, c_0=1, \varepsilon=1, \phi^* = 0.4.
\end{align*}
\textbf{Experiment 1:} In this test we set the initial data to $\phi_0(x) = 0.4 + \xi(x), q_0=0, \u_0=\mathbf{0}, \CC_0=\sqrt{2}\I$, where $\xi(x)$ is a random perturbation from $[-10^{-3},10^{-3}]$.\\[0.5em]
Figures \ref{fig:exp1phi}-\ref{fig:exp1en} present the time evolution of numerical solutions for $\phi,q,p,\mu$ and $\snorm*{\u}_2$. We can clearly recognize the frozen phase $(t \approx 600)$, the elastic regime with solvent-rich droplets $(t \approx 1200)$, volume shrinking $(t\approx 1800)$ and network pattern $(t\approx 2400)$ and later the relaxation regime $(t\approx 4000)$. \\
For longer simulations we will also recover the hydrodynamic regime with a typical droplet pattern \cite{Strasser.2018}.
These results are qualitatively in a good agreement with physical experiments presented in \cite{Tanaka.}, see Figure~8, and
with \cite{Zhou.2006}.

In Figure~4 we have plotted the time evolution of partial contributions of the total free energy. We can also realize that the numerical scheme is practically energy dissipative and mass conservative, see Figure \ref{fig:exp2en} and \ref{fig:exp3en}. \\
In Figure \ref{fig:exp1qpm} we observe that the variables $\mu$ and $q$ have a quite similar appearance despite of the difference in magnitude, which results from the fact that the evolution of $q$ is induced by $\mu$. Further, the pressure $p$ also exhibits a network-like structure, although to velocity looks quite unstructured. In all experiments the conformation tensors are almost constant and we omit their plots.

%----------------------------------------------------------------------------------------------------------------------------------------------------------------

\begin{figure}[H]
	\centering
	\begin{tabular}{ccc}
		\hspace{-0.7cm}\subfloat[][]{\includegraphics[trim={0.48cm 1.1cm 1.2cm 0.2cm},clip,scale=0.45]{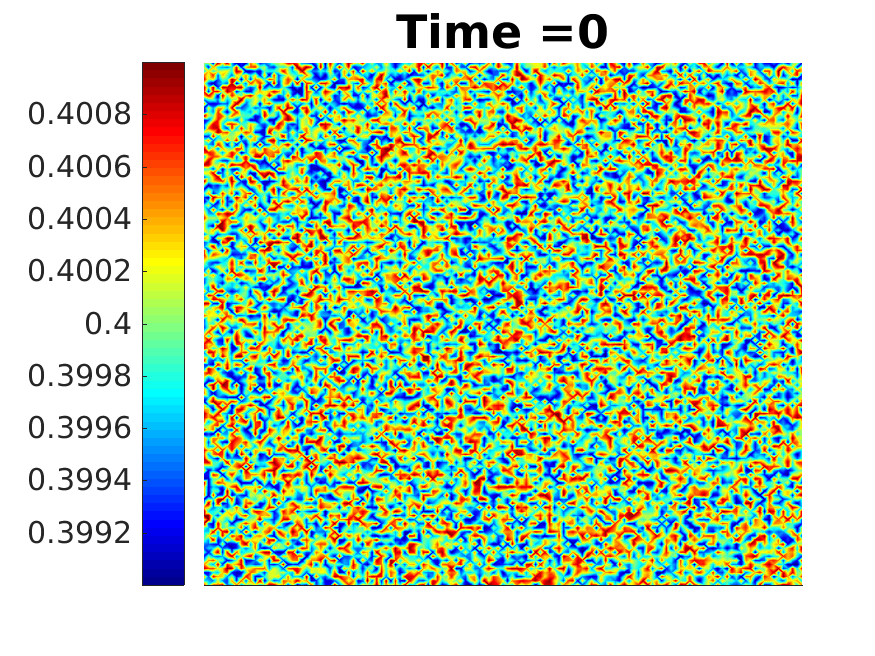}} &
		\subfloat[][]{\includegraphics[trim={0.7cm 1.1cm 1.2cm 0.2cm},clip,scale=0.45]{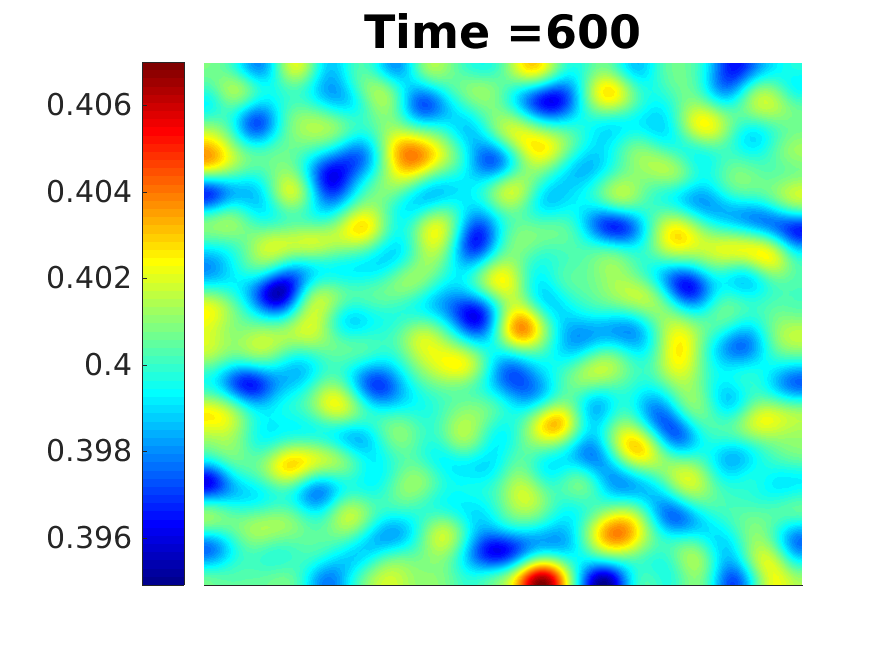}} &
		\subfloat[][]{\includegraphics[trim={0.7cm 1.1cm 1.2cm 0.2cm},clip,scale=0.45]{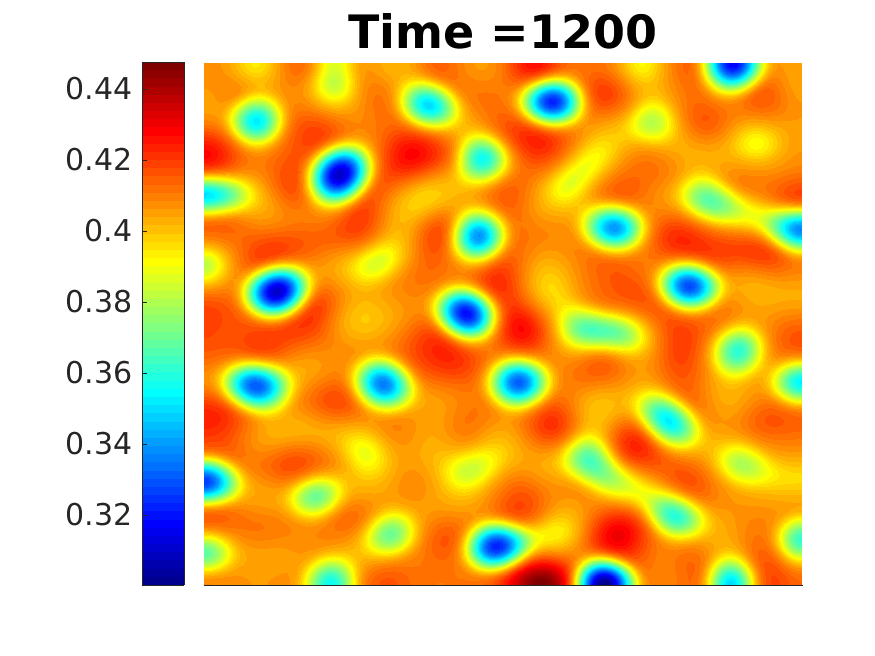}}\\
		\hspace{-0.7cm}\subfloat[][]{\includegraphics[trim={0.5cm 1.1cm 1.2cm 0.2cm},clip,scale=0.45]{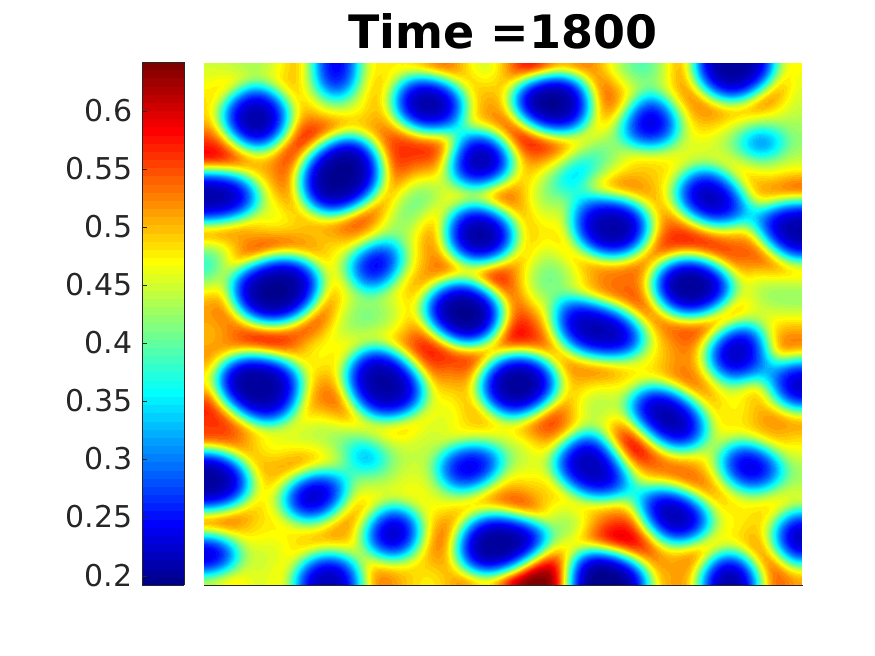}} &
		\subfloat[][]{\includegraphics[trim={0.7cm 1.1cm 1.2cm 0.2cm},clip,scale=0.45]{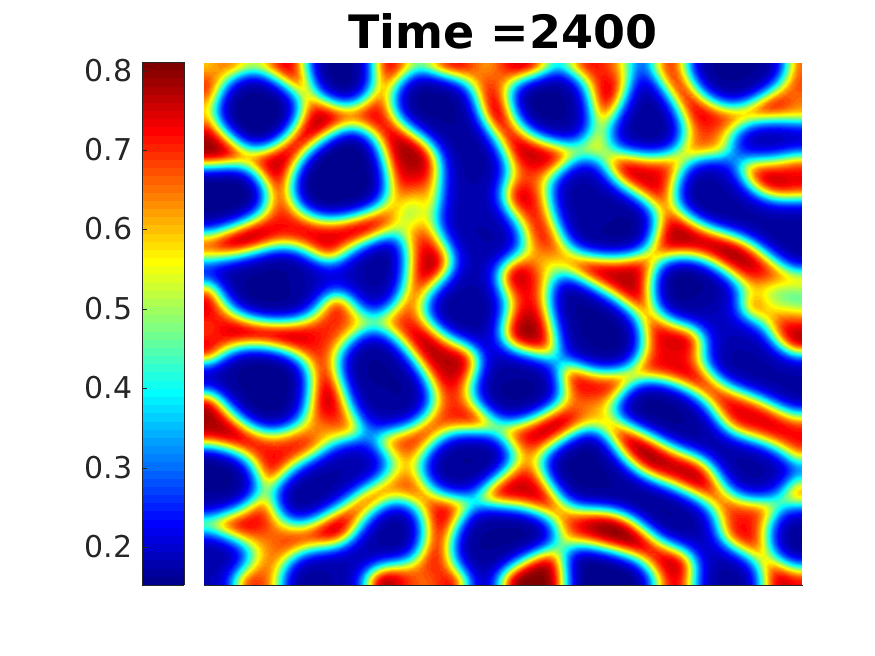}} &
		\subfloat[][]{\includegraphics[trim={0.7cm 1.1cm 1.2cm 0.2cm},clip,scale=0.45]{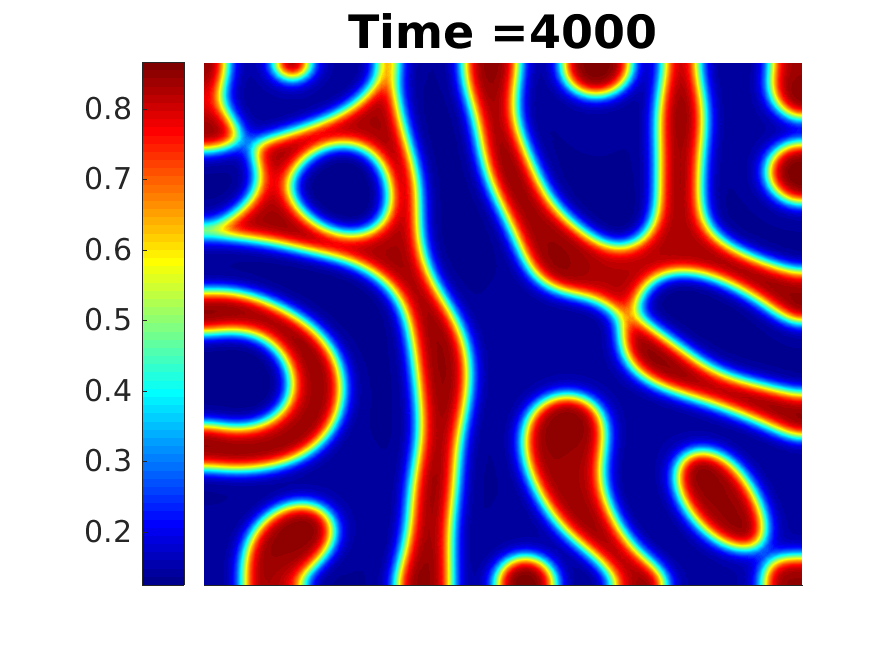}}
	\end{tabular}
	\caption{Experiment 1: Spinodal decomposition, time evolution of the volume fraction $\phi$.}
	\label{fig:exp1phi}
\end{figure}

\begin{figure}[H]
	\centering
	\begin{tabular}{cccc}
		\hspace{-0.9cm}\subfloat[][]{\includegraphics[trim={0.8cm 1.cm 0.5cm 0.2cm},clip,scale=0.4]{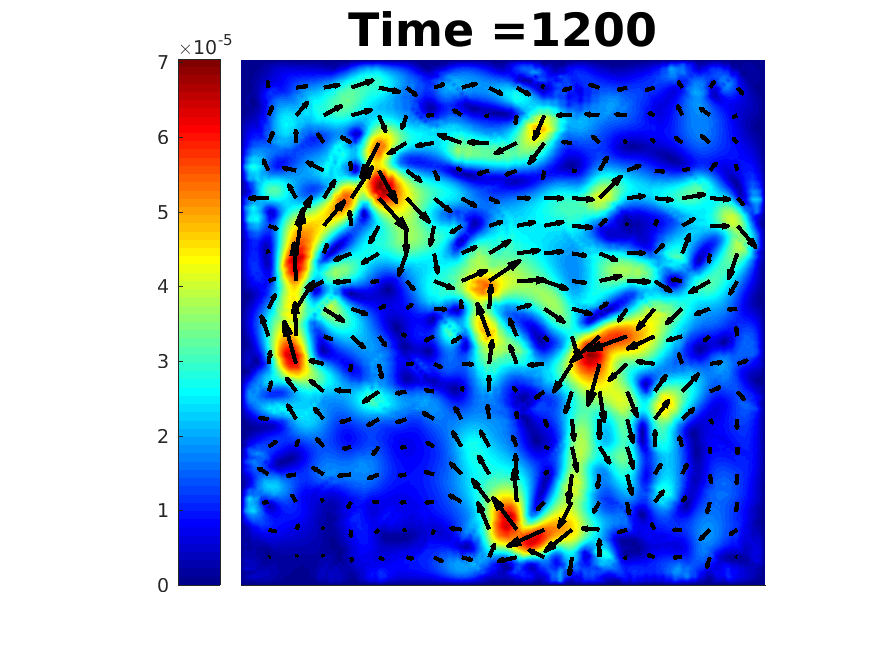}} &
		\hspace{-1.1cm}\subfloat[][]{\includegraphics[trim={0.8cm 1.cm 0.5cm 0.2cm},clip,scale=0.4]{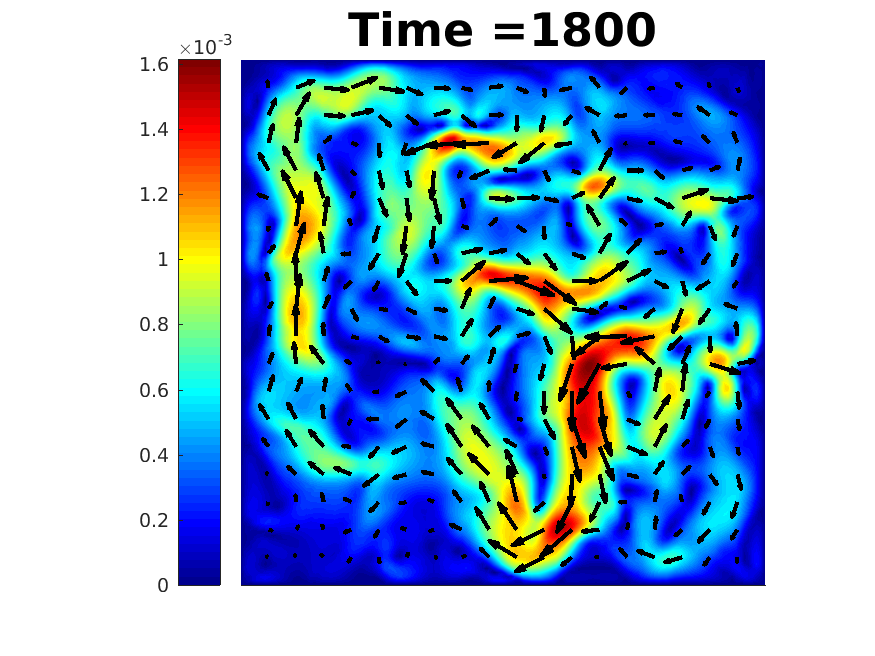}} &
		\hspace{-1.1cm}\subfloat[][]{\includegraphics[trim={0.8cm 1.cm 0.5cm 0.2cm},clip,scale=0.4]{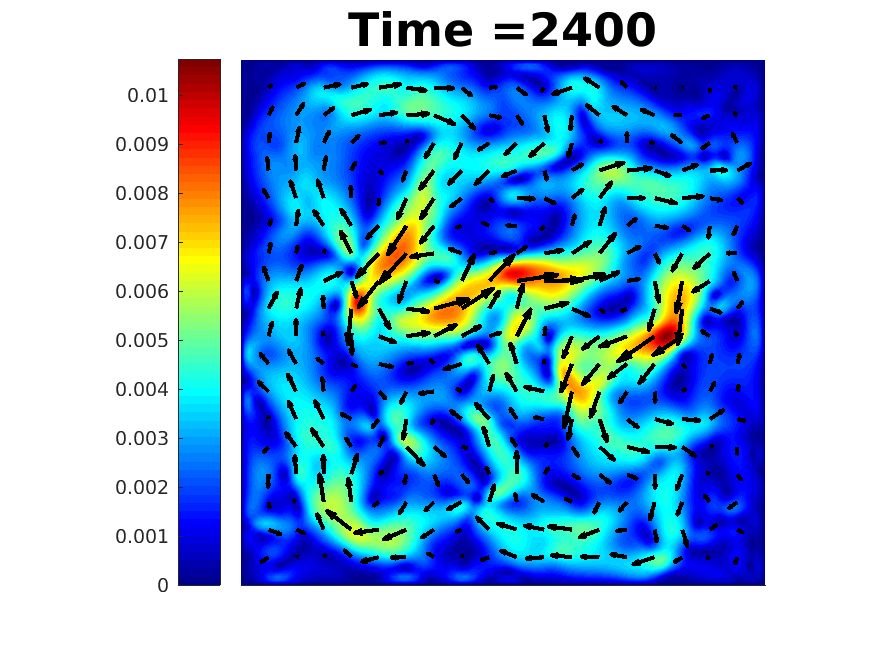}} &
		\hspace{-1.1cm}\subfloat[][]{\includegraphics[trim={0.8cm 1.cm 0.5cm 0.2cm},clip,scale=0.4]{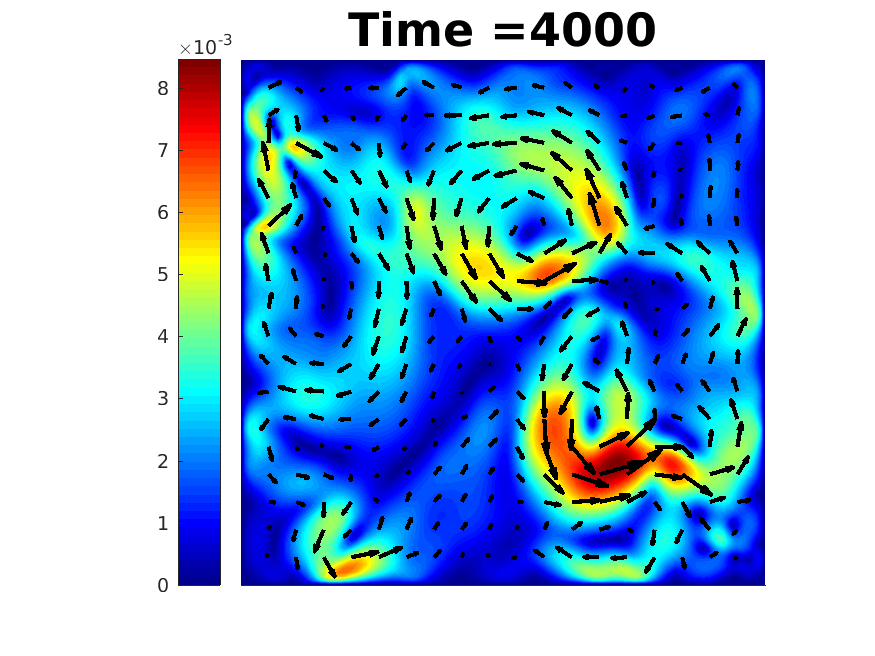}}\\
	\end{tabular}
	\caption{Experiment 1: Spinodal decomposition, time evolution of the velocity norm $|\u|_2$.}
	\label{fig:exp1qpm}
\end{figure}

\vspace{-1cm}

\begin{figure}[H]
	\centering
	\begin{tabular}{cccc}
		\hspace{-0.7cm}\subfloat[][]{\includegraphics[trim={0.8cm 0.4cm 0.3cm 0.2cm},clip,scale=0.37]{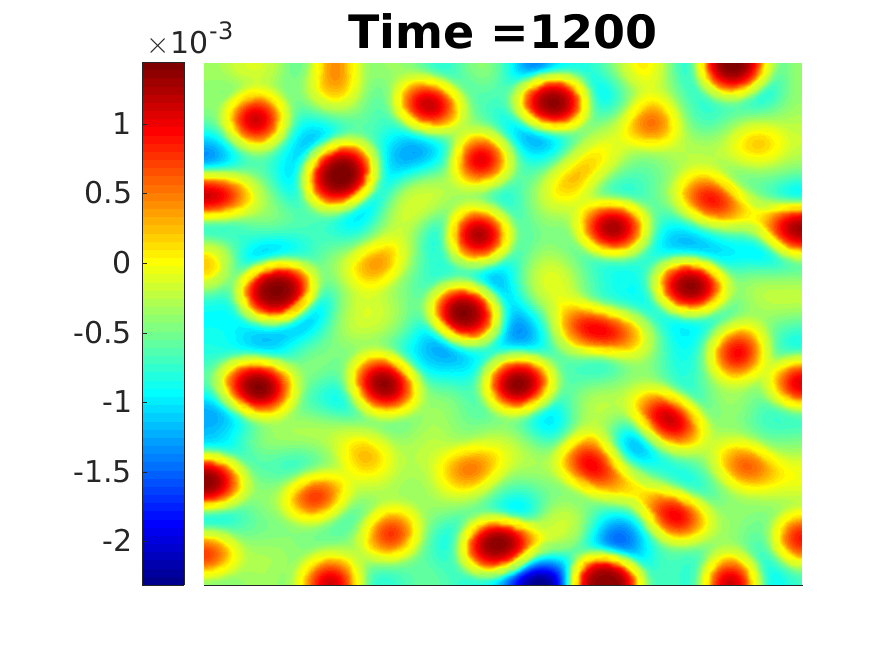}} &
		\hspace{-0.7cm}\subfloat[][]{\includegraphics[trim={0.8cm 0.4cm 0.3cm 0.2cm},clip,scale=0.37]{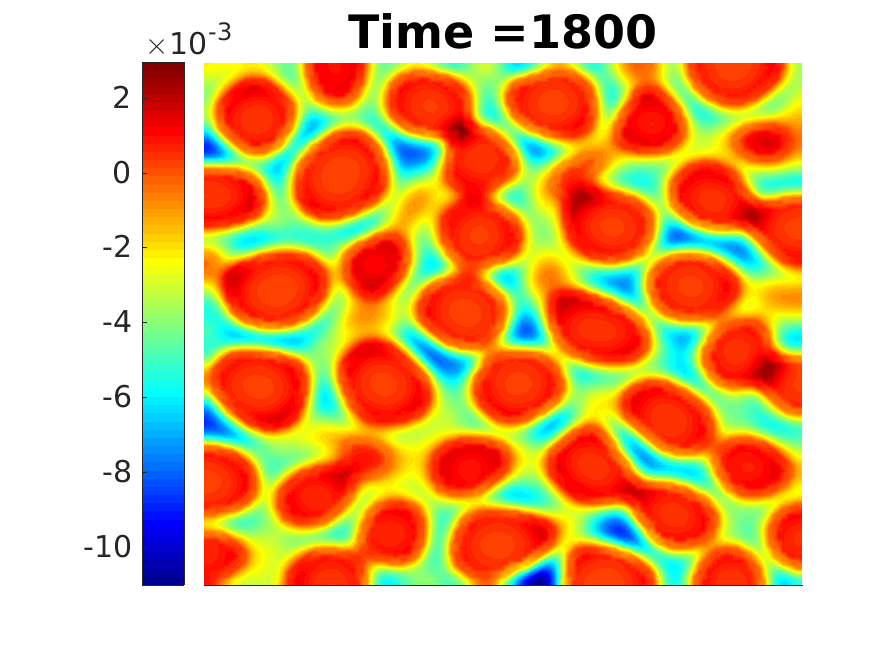}} &
		\hspace{-0.7cm}\subfloat[][]{\includegraphics[trim={0.8cm 0.4cm 0.3cm 0.2cm},clip,scale=0.37]{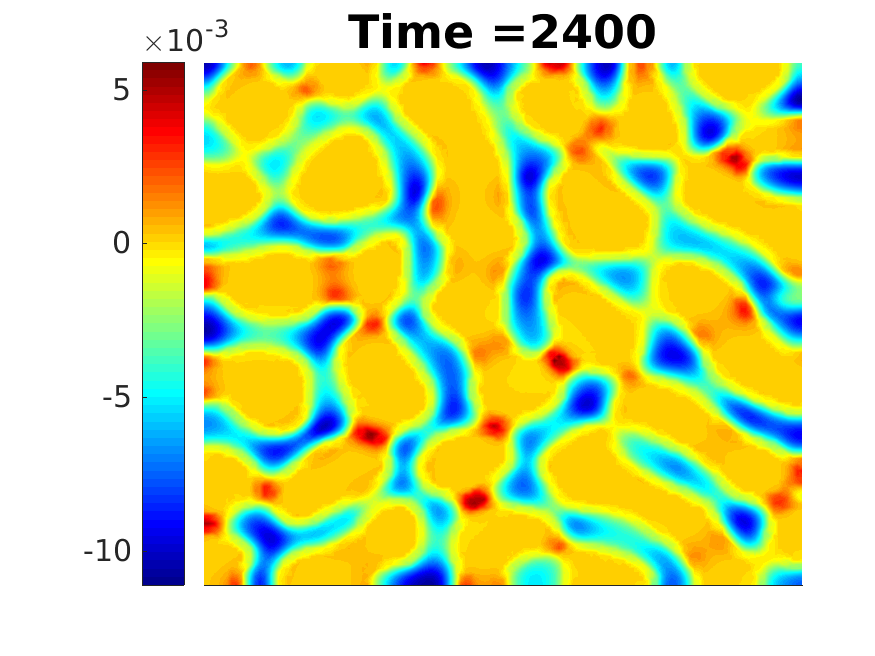}} &
		\hspace{-0.7cm}\subfloat[][]{\includegraphics[trim={0.8cm 0.4cm 0.3cm 0.2cm},clip,scale=0.37]{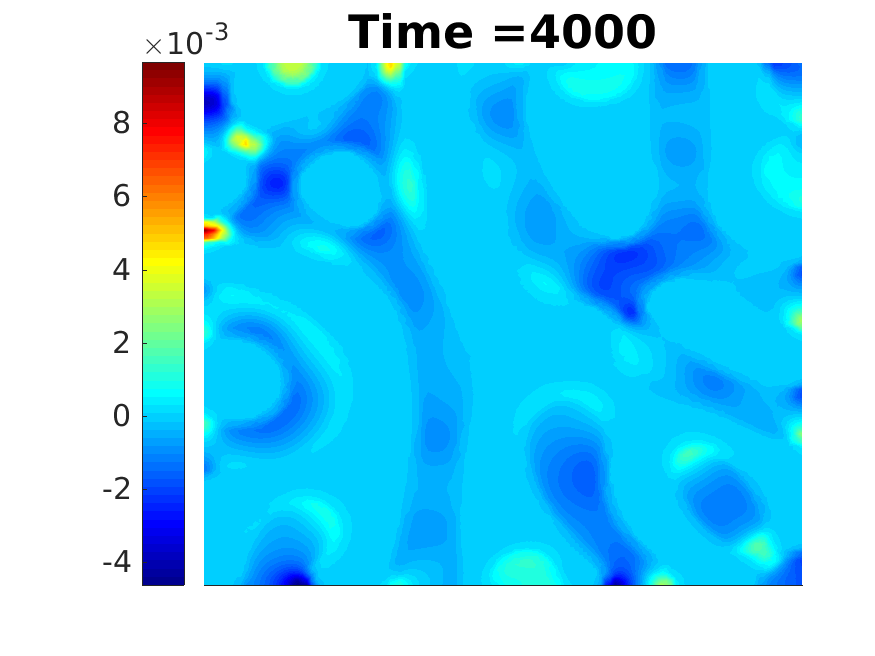}} \\
		\hspace{-0.7cm}\subfloat[][]{\includegraphics[trim={0.8cm 1.1cm 0.3cm 0.2cm},clip,scale=0.37]{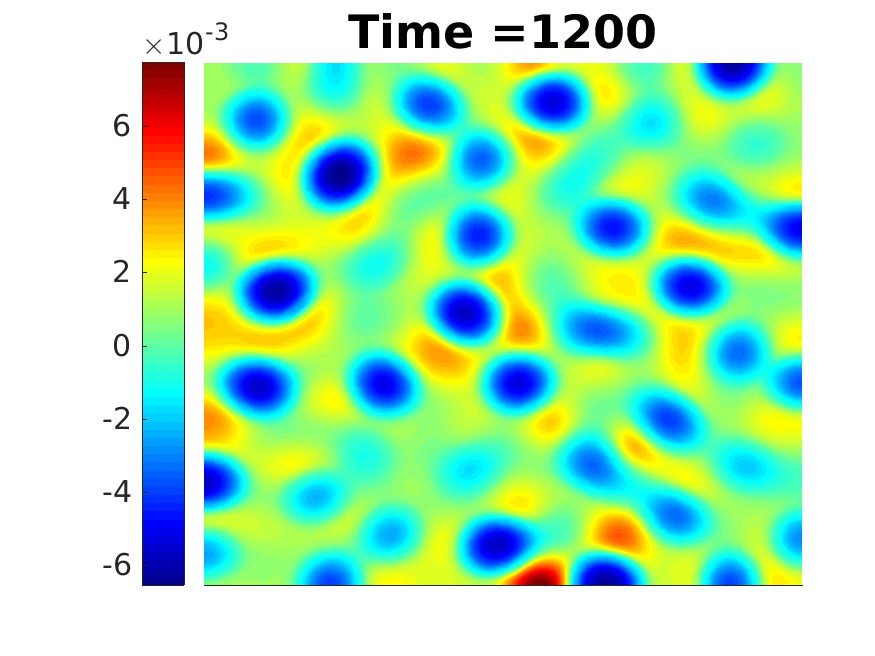}} &
		\hspace{-0.7cm}\subfloat[][]{\includegraphics[trim={0.8cm 1.1cm 0.3cm 0.2cm},clip,scale=0.37]{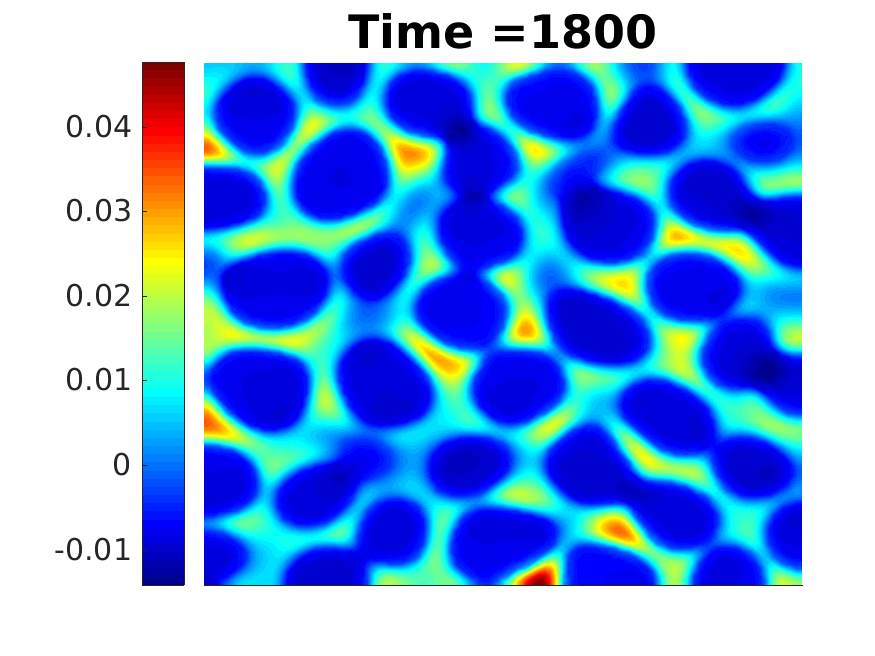}} &
		\hspace{-0.7cm}\subfloat[][]{\includegraphics[trim={0.8cm 1.1cm 0.3cm 0.2cm},clip,scale=0.37]{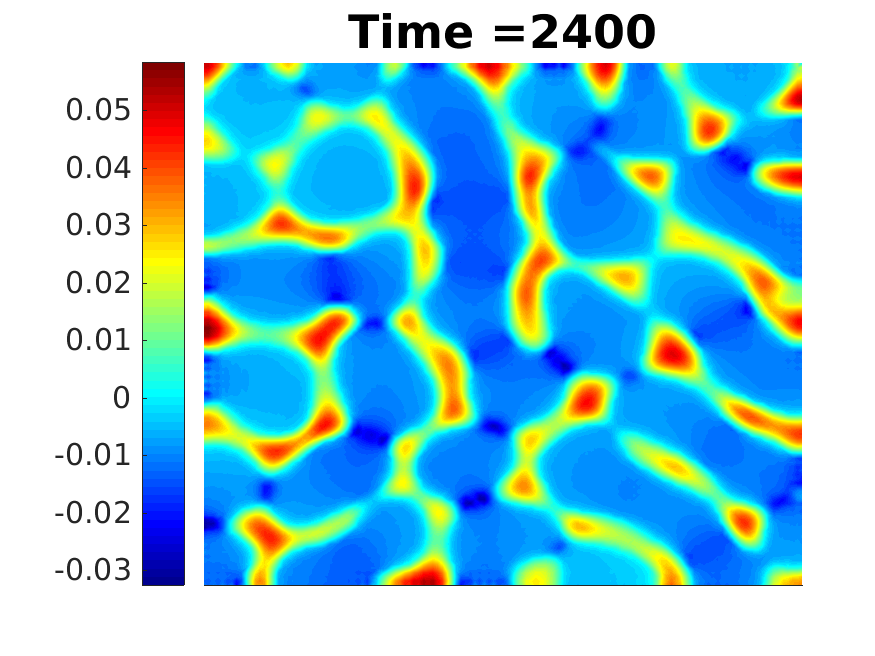}} &
		\hspace{-0.7cm}\subfloat[][]{\includegraphics[trim={0.8cm 1.1cm 0.3cm 0.2cm},clip,scale=0.37]{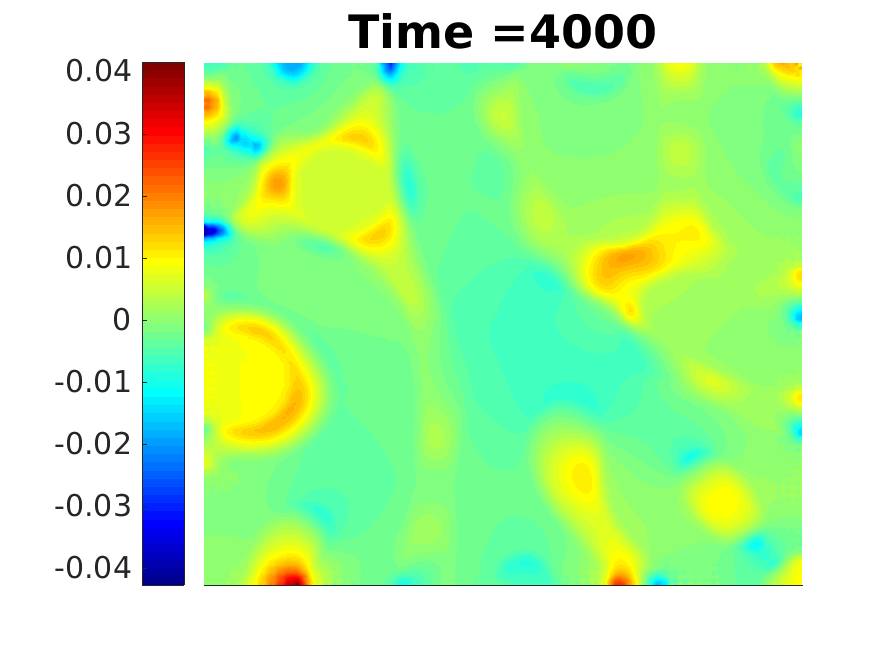}} \\	
		\hspace{-0.7cm}\subfloat[][]{\includegraphics[trim={0.8cm 1.1cm 0.3cm 0.2cm},clip,scale=0.37]{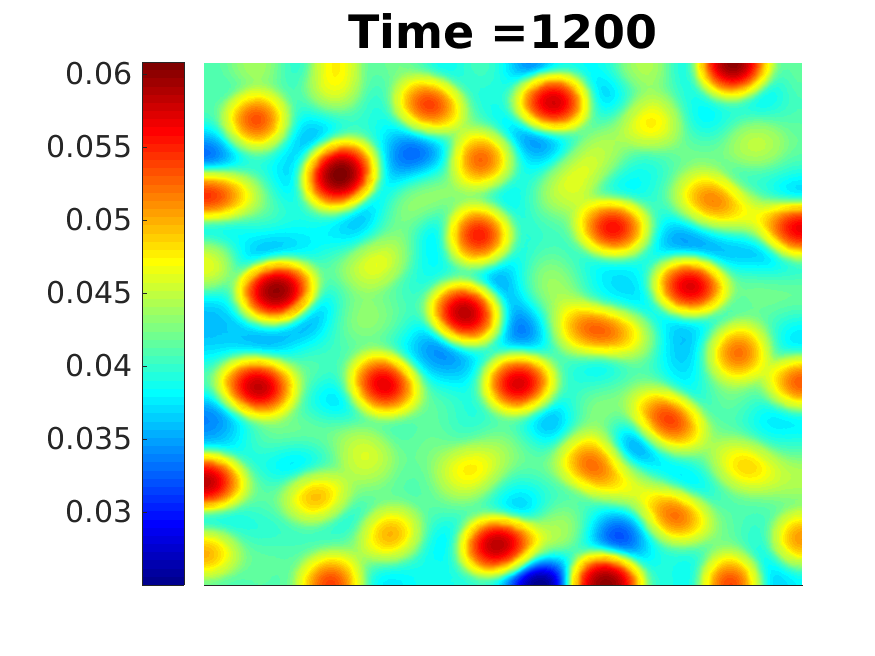}} &
		\hspace{-0.7cm}\subfloat[][]{\includegraphics[trim={0.8cm 1.1cm 0.3cm 0.2cm},clip,scale=0.37]{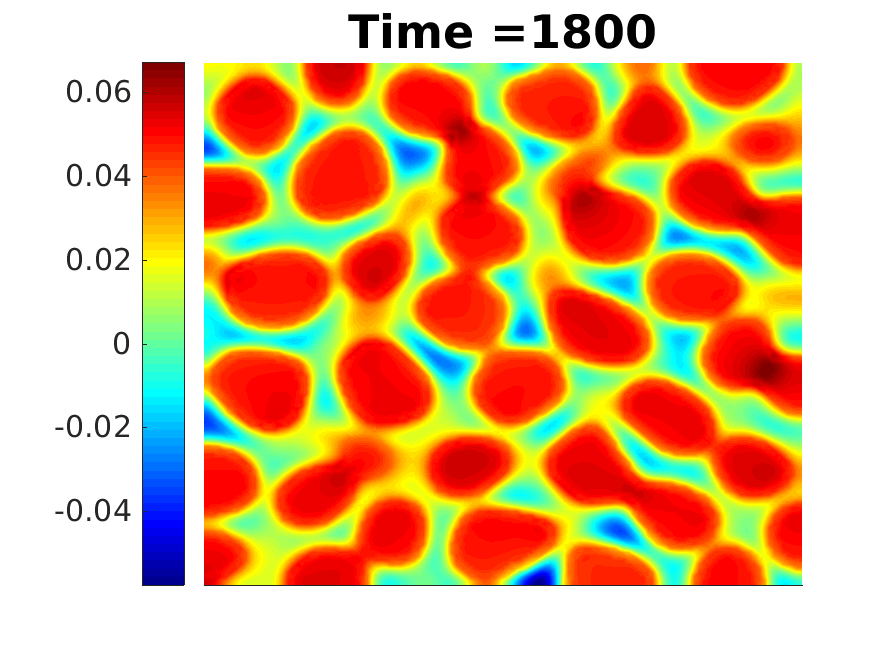}} &
		\hspace{-0.7cm}\subfloat[][]{\includegraphics[trim={0.8cm 1.1cm 0.3cm 0.2cm},clip,scale=0.37]{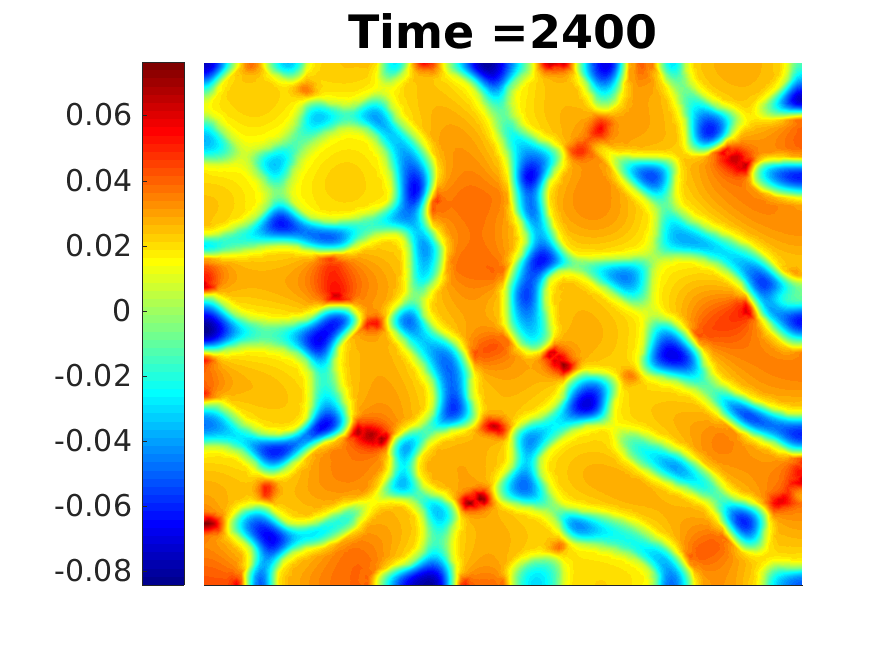}} &
		\hspace{-0.7cm}\subfloat[][]{\includegraphics[trim={0.8cm 1.1cm 0.3cm 0.2cm},clip,scale=0.37]{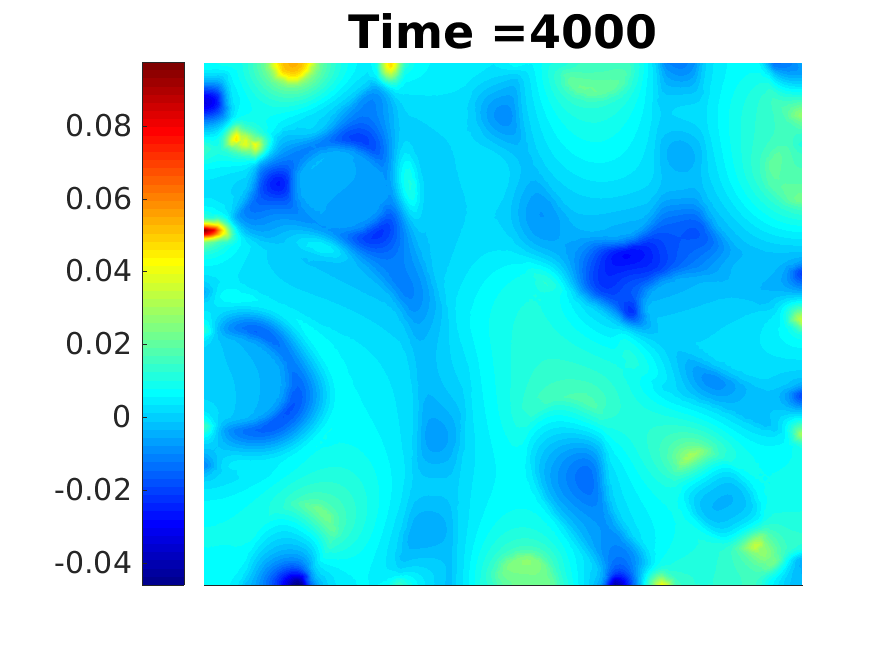}}
	\end{tabular}
	\caption{Experiment 1: Spinodal decomposition, time evolution of the bulk stress $q$ (top), pressure $p$ (middle) and chemical potential $\mu$ (bottom).}
	\label{fig:exp1en}
\end{figure}
\vspace{-0.1cm}

\begin{figure}[H]
	\centering
	\begin{tabular}{ccc}
		\subfloat[][]{\includegraphics[trim={0.0cm 0.0cm 0.0cm 0.0cm},clip,scale=0.34]{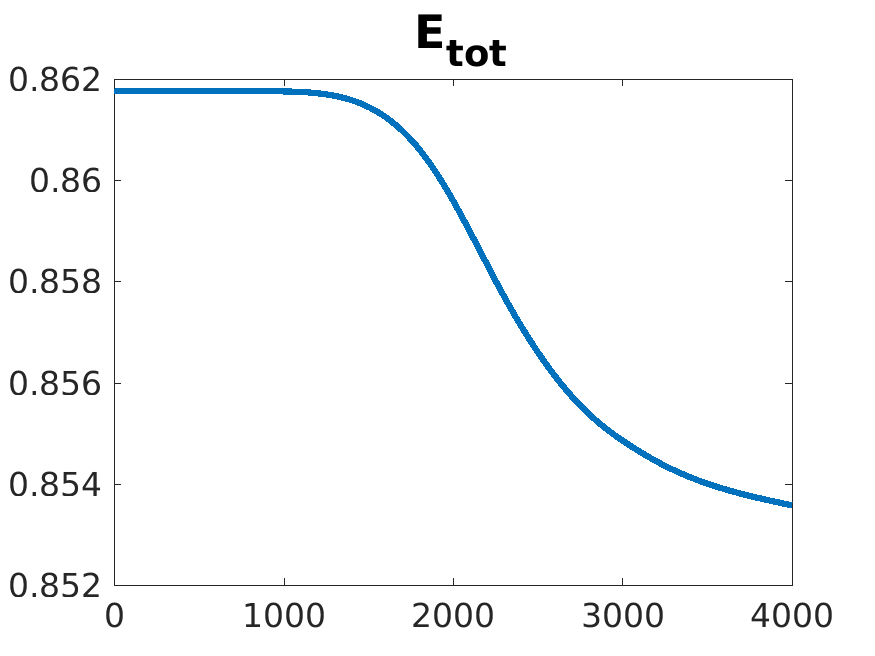}} &
		\subfloat[][]{\includegraphics[trim={0.0cm 0.0cm 0.0cm 0.0cm},clip,scale=0.34]{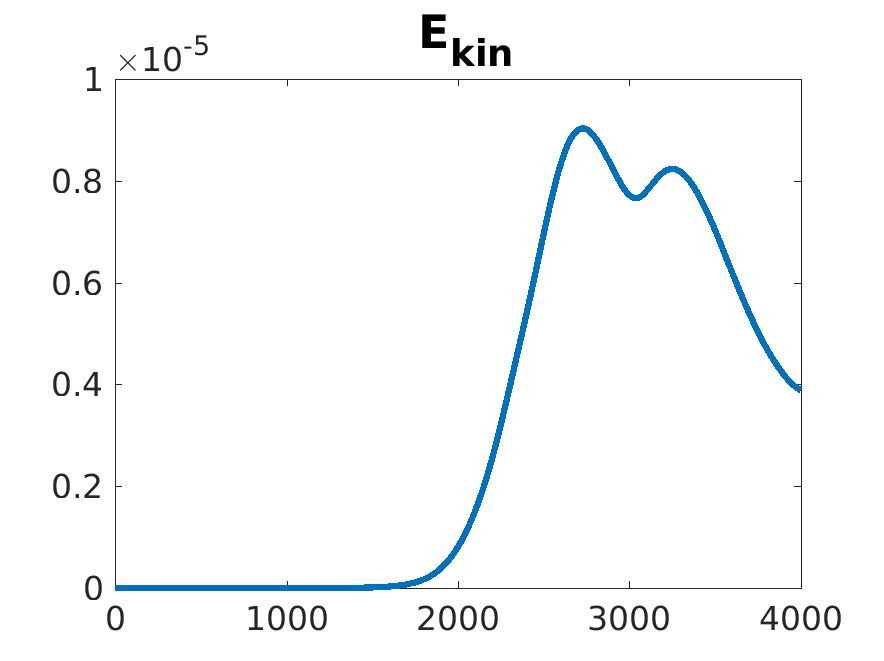}} &
		\subfloat[][]{\includegraphics[trim={0.0cm 0.0cm 0.0cm 0.0cm},clip,scale=0.34]{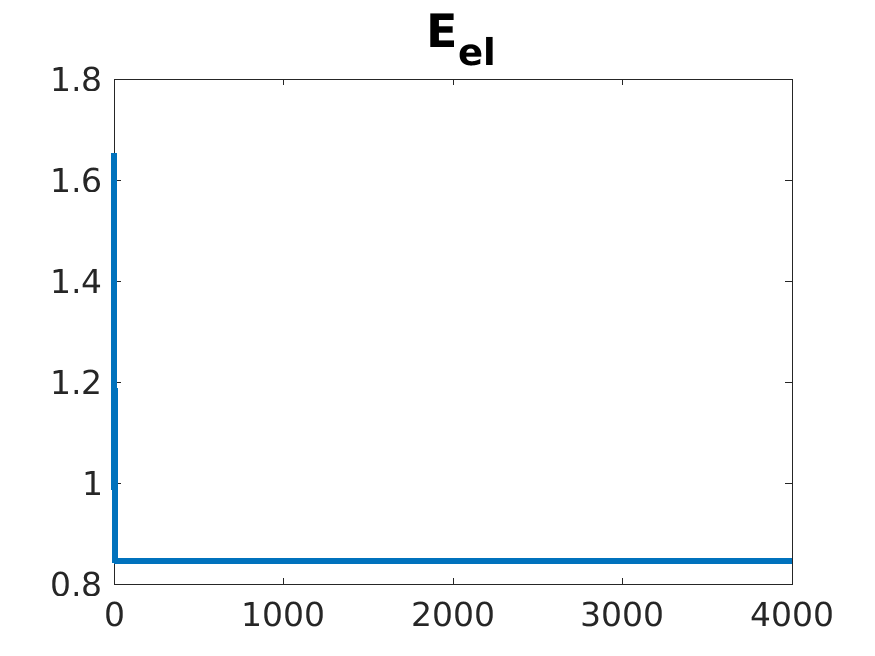}}\\
		\subfloat[][]{\includegraphics[trim={0.0cm 0.0cm 0.0cm 0.0cm},clip,scale=0.34]{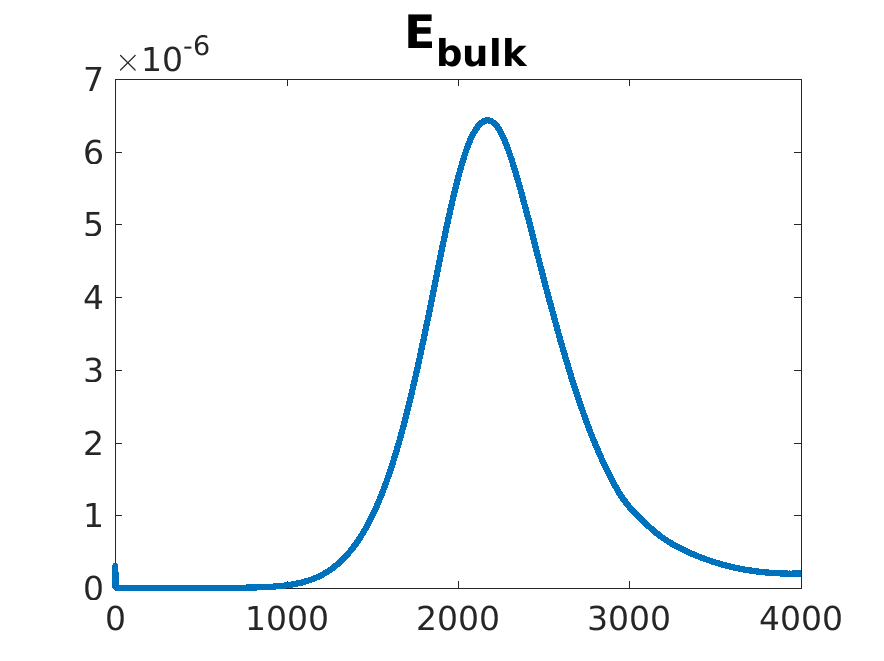}} &
		\subfloat[][]{\includegraphics[trim={0.0cm 0.0cm 0.0cm 0.0cm},clip,scale=0.34]{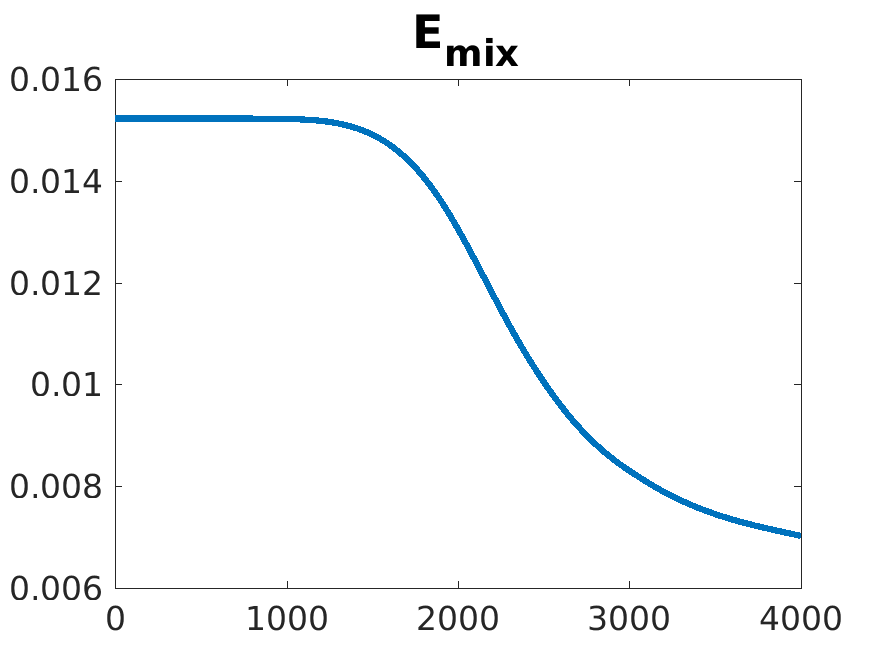}} &	
		\subfloat[][]{\includegraphics[trim={0.0cm 0.0cm 0.0cm 0.0cm},clip,scale=0.34]{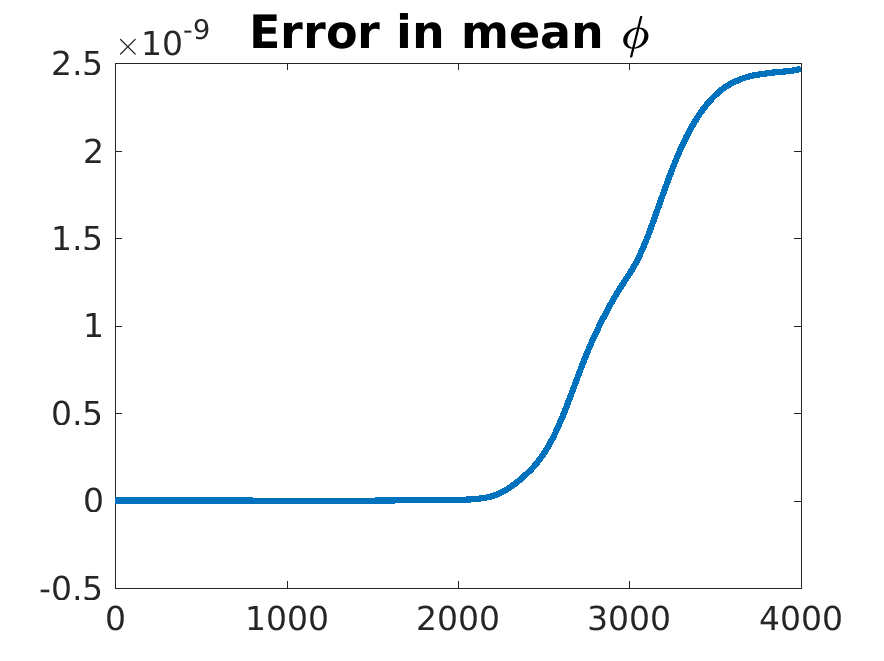}}	
	\end{tabular}
	\caption{Experiment 1: Time evolution of the total energy $E_{tot}$ and the corresponding energy components $E_{kin}=\frac{1}{2}\int \snorm*{u}^2,$ $E_{el}=\frac{1}{4}\int \trC^2 - 2\mathrm{tr}(\log(\CC))$, $E_{bulk} = \frac{1}{2}\int \snorm*{q}^2$, $E_{mix}=\int \frac{c_0}{2}\snorm*{\na\phi}^2~+~F(\phi)$. The last picture demonstrates that the numerical scheme preserves mass $\frac{1}{\snorm*{\Omega}}\int \phi$ up to small error $\frac{1}{\snorm*{\Omega}}\int \(\phi(x,0) - \phi(x,t)\) \dx$ of the order $10^{-9}$.}
\end{figure}

\textbf{Experiment 2:} The next simulation differs only in the potential and the initial condition for the conformation tensor.  More precisely, we have now used the Flory-Huggins potential with $n_p=n_s=1 \text{ and } \chi=28/11$ and $\CC_0=\I$.
We observe similar phase evolution as in Experiment 1 but on a different timescale. We can notice that simulations with the Flory Huggins potential are roughly twice faster than with the Ginzburg-Landau potential.
\begin{figure}[H]
	\centering
	\begin{tabular}{ccc}
		\hspace{-0.7cm}\subfloat[][]{\includegraphics[trim={0.48cm 1.1cm 1.2cm 0.2cm},clip,scale=0.45]{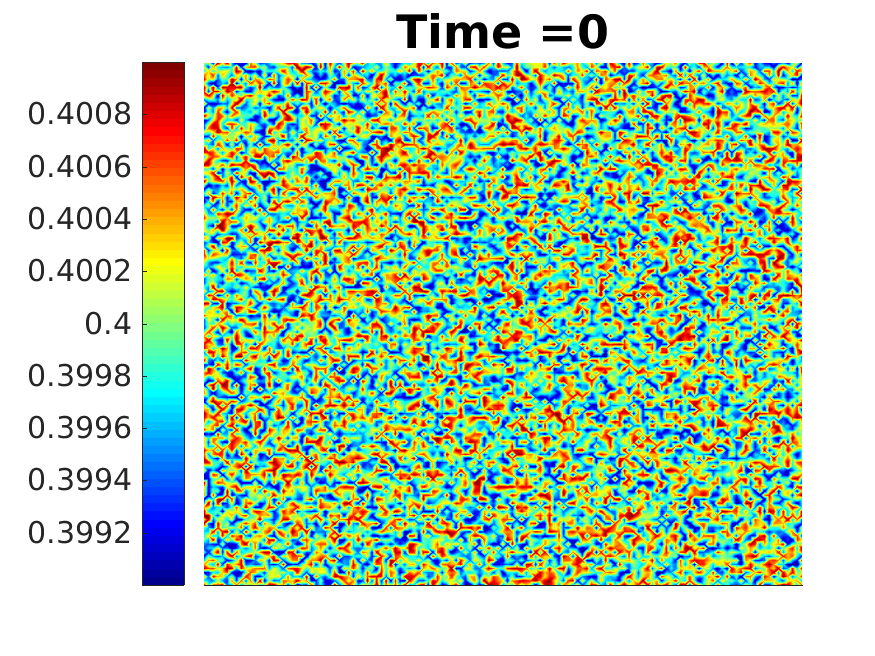}} &
		\subfloat[][]{\includegraphics[trim={0.7cm 1.1cm 1.2cm 0.2cm},clip,scale=0.45]{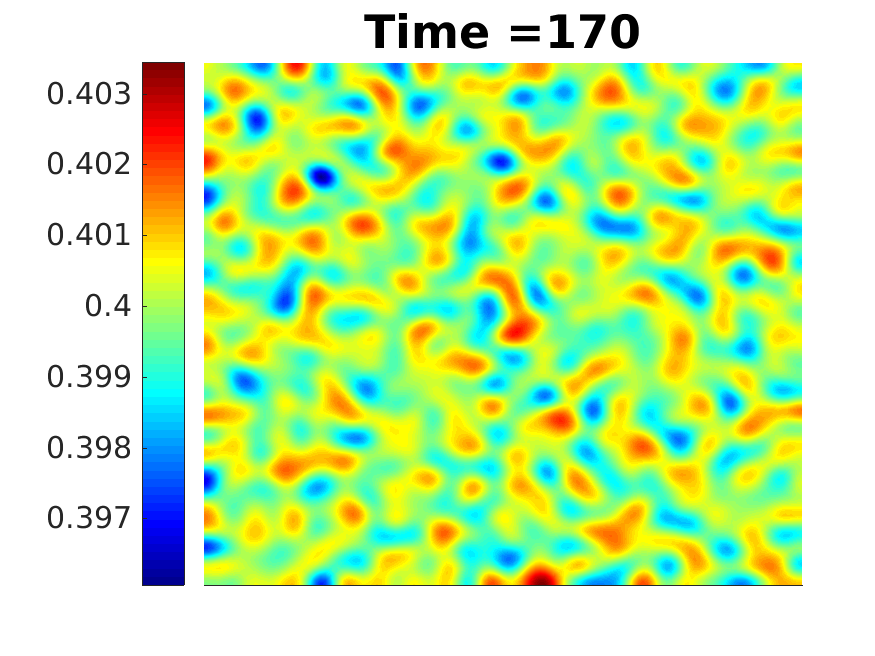}} &
		\subfloat[][]{\includegraphics[trim={0.7cm 1.1cm 1.2cm 0.2cm},clip,scale=0.45]{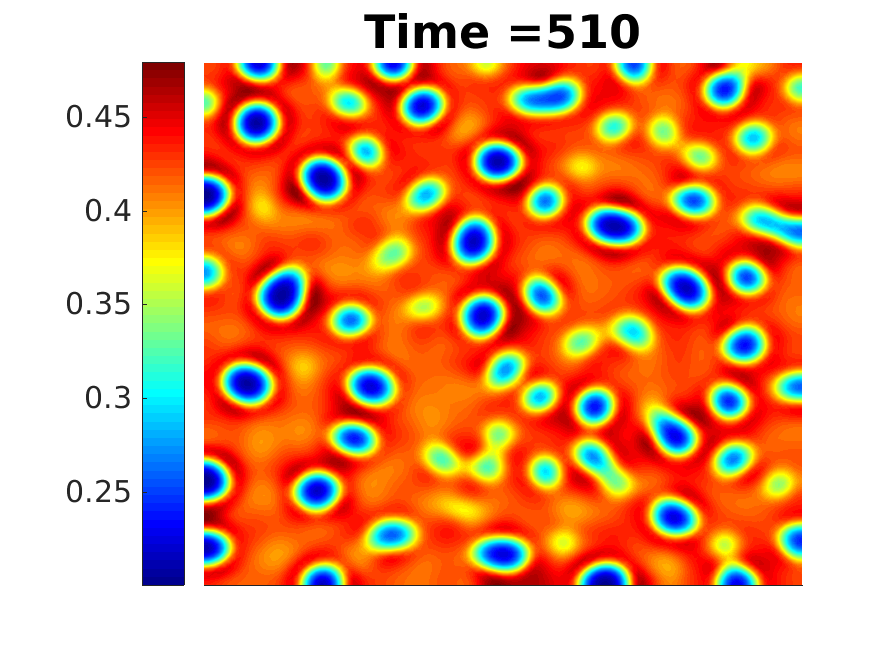}}\\
		\hspace{-0.7cm}\subfloat[][]{\includegraphics[trim={0.5cm 1.1cm 1.2cm 0.2cm},clip,scale=0.45]{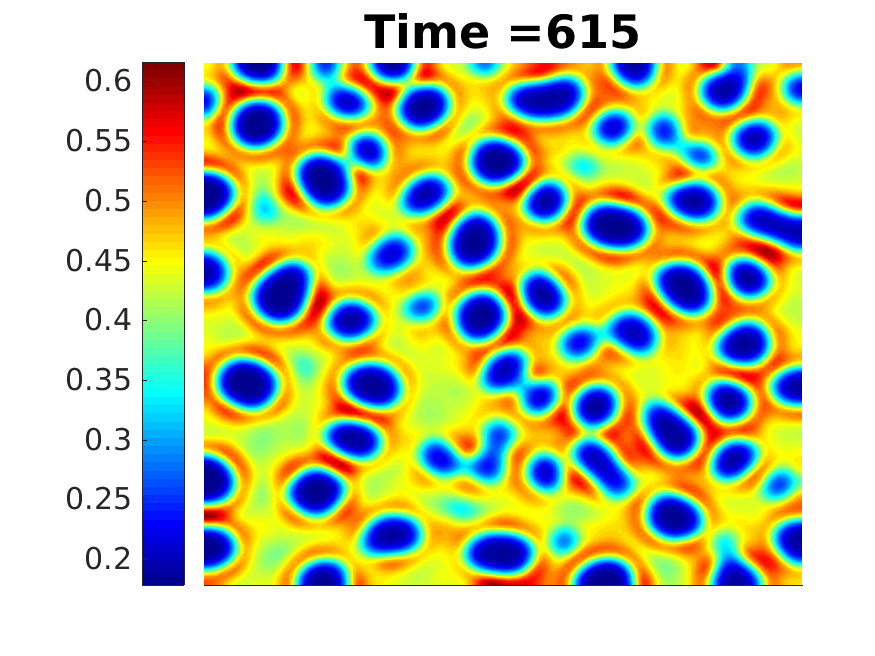}} &
		\subfloat[][]{\includegraphics[trim={0.7cm 1.1cm 1.2cm 0.2cm},clip,scale=0.45]{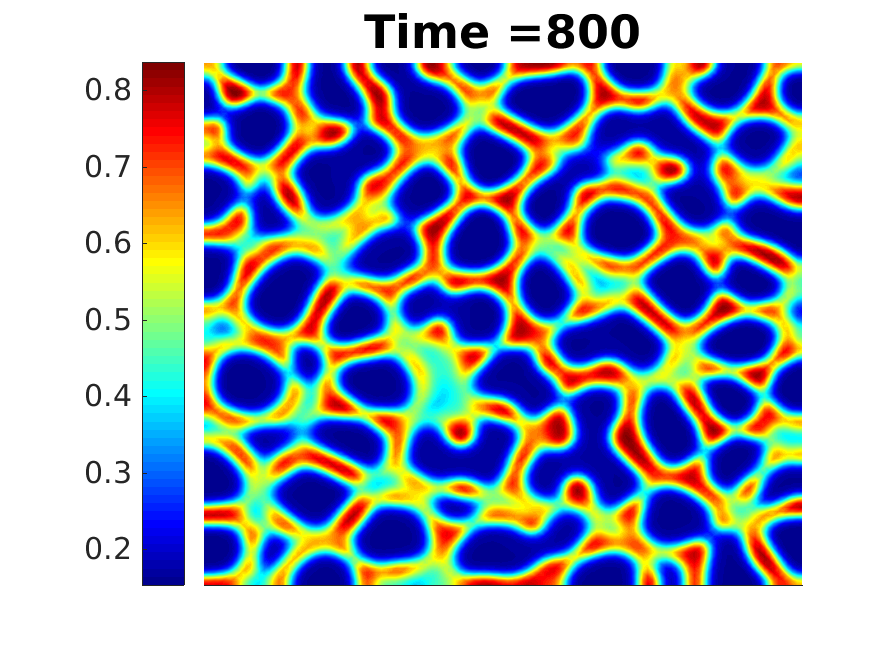}} &
		\subfloat[][]{\includegraphics[trim={0.7cm 1.1cm 1.2cm 0.2cm},clip,scale=0.45]{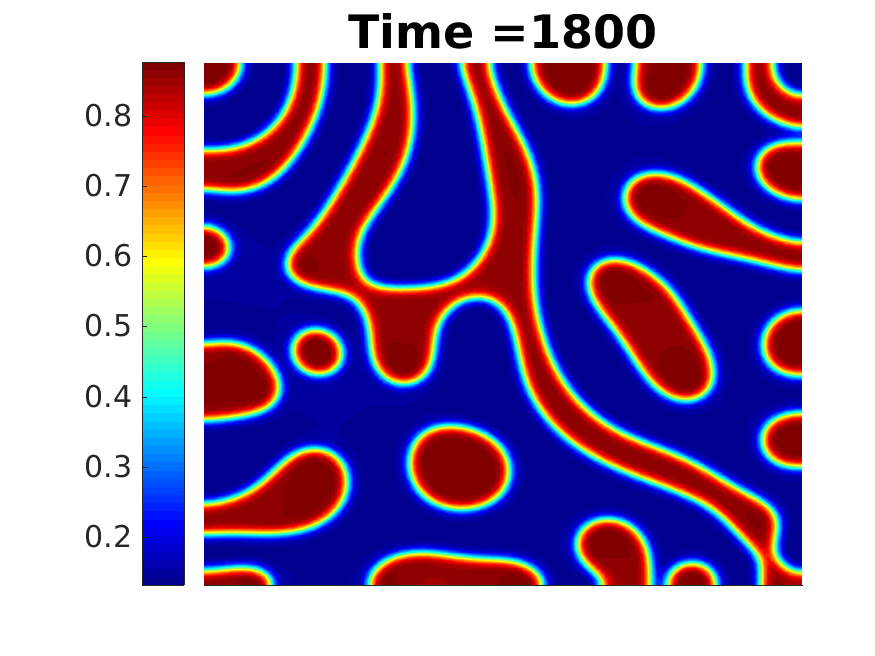}}
	\end{tabular}
	\caption{Experiment 2: Spinodal decomposition, time evolution of the volume fraction $\phi$.}
\end{figure}

\begin{figure}[H]
	\centering
	\begin{tabular}{cccc}
		\hspace{-0.9cm}\subfloat[][]{\includegraphics[trim={0.8cm 1.cm 0.7cm 0.1cm},clip,scale=0.4]{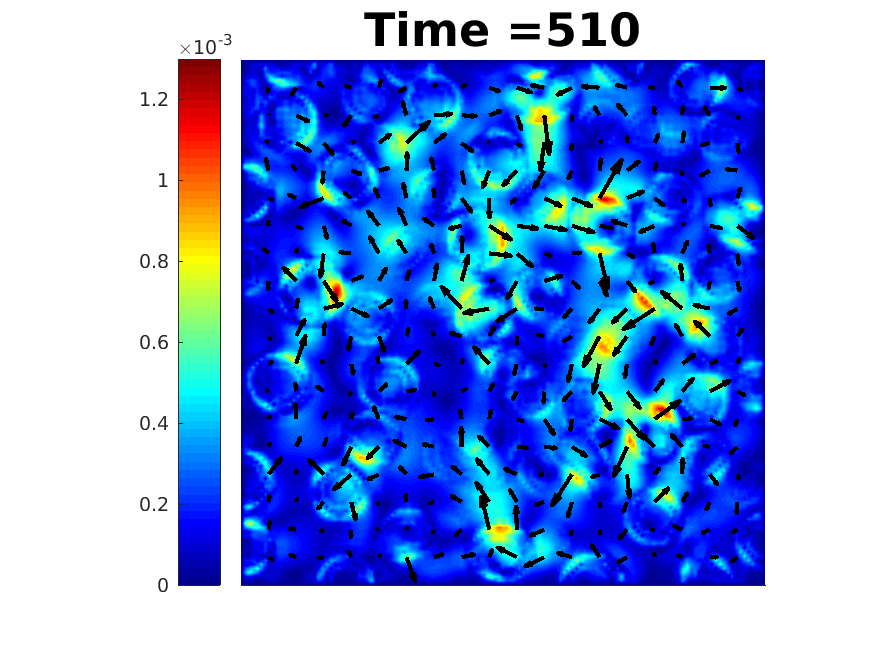}} &
		\hspace{-1.1cm}\subfloat[][]{\includegraphics[trim={0.8cm 1.cm 0.7cm 0.1cm},clip,scale=0.4]{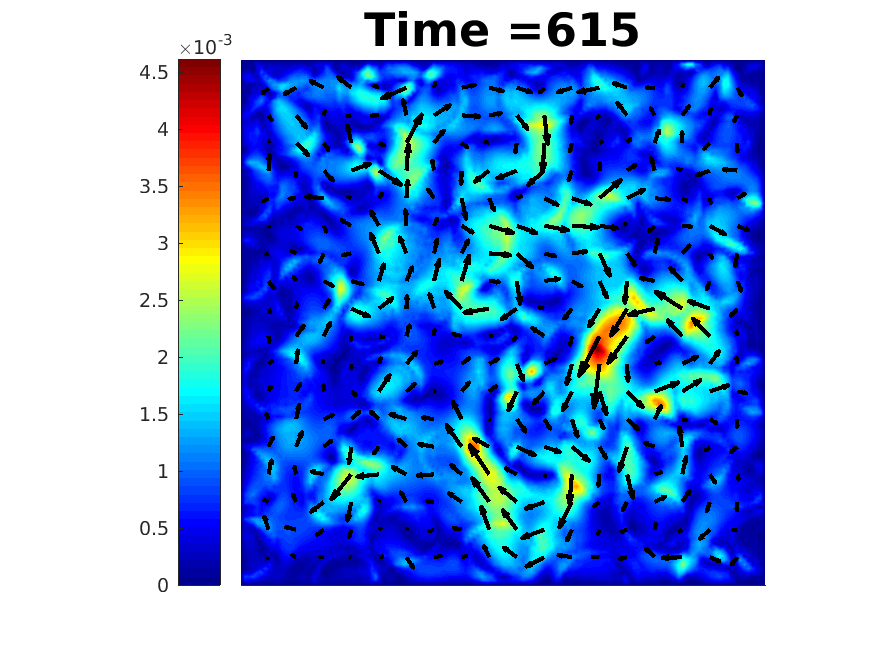}} &
		\hspace{-1.1cm}\subfloat[][]{\includegraphics[trim={0.8cm 1.cm 0.7cm 0.1cm},clip,scale=0.4]{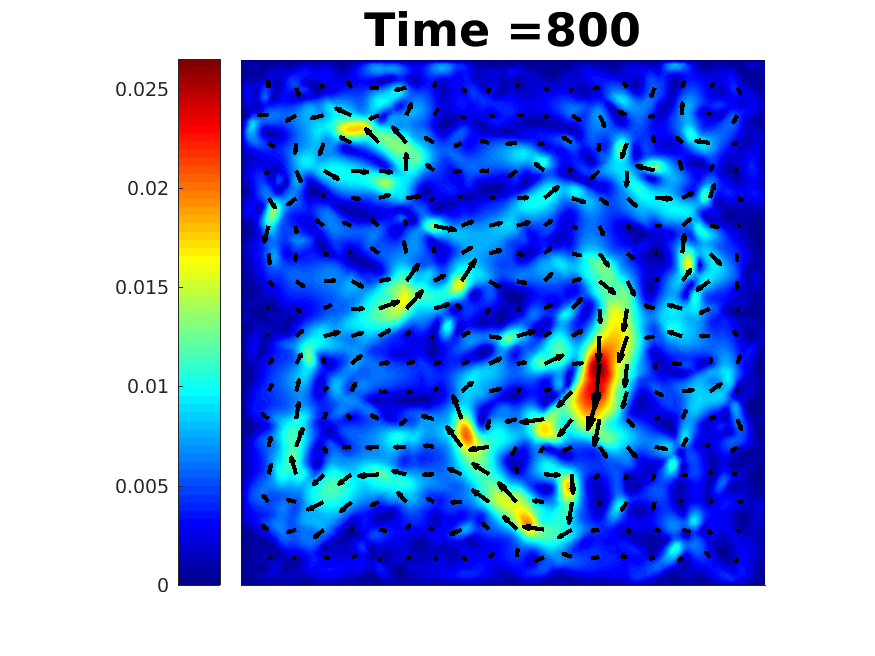}} &
		\hspace{-1.1cm}\subfloat[][]{\includegraphics[trim={0.8cm 1.cm 0.7cm 0.1cm},clip,scale=0.4]{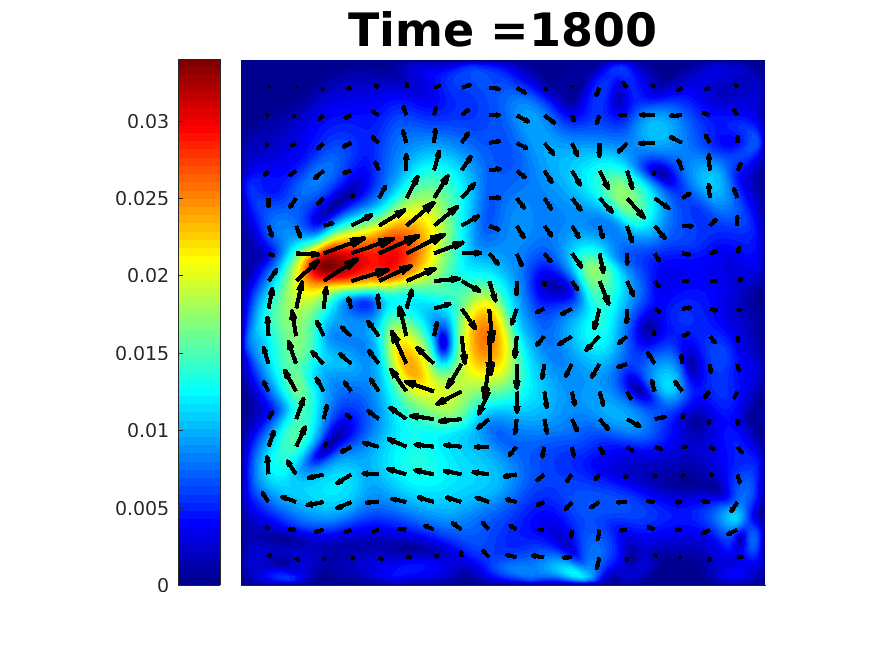}}\\
	\end{tabular}
	\caption{Experiment 2: Spinodal decomposition, time evolution of the velocity norm $|\u|_2$.}
\end{figure}

\vspace{-1cm}

\begin{figure}[H]
	\centering
	\begin{tabular}{cccc}
		\hspace{-0.7cm}\subfloat[][]{\includegraphics[trim={0.8cm 0.4cm 0.3cm 0.2cm},clip,scale=0.36]{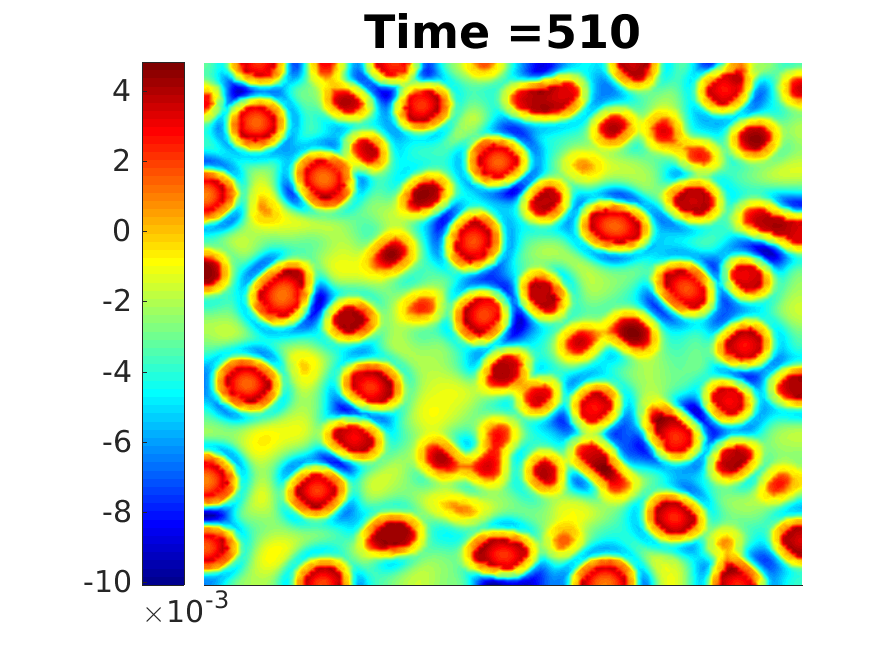}} &
		\hspace{-0.7cm}\subfloat[][]{\includegraphics[trim={0.8cm 0.4cm 0.3cm 0.2cm},clip,scale=0.36]{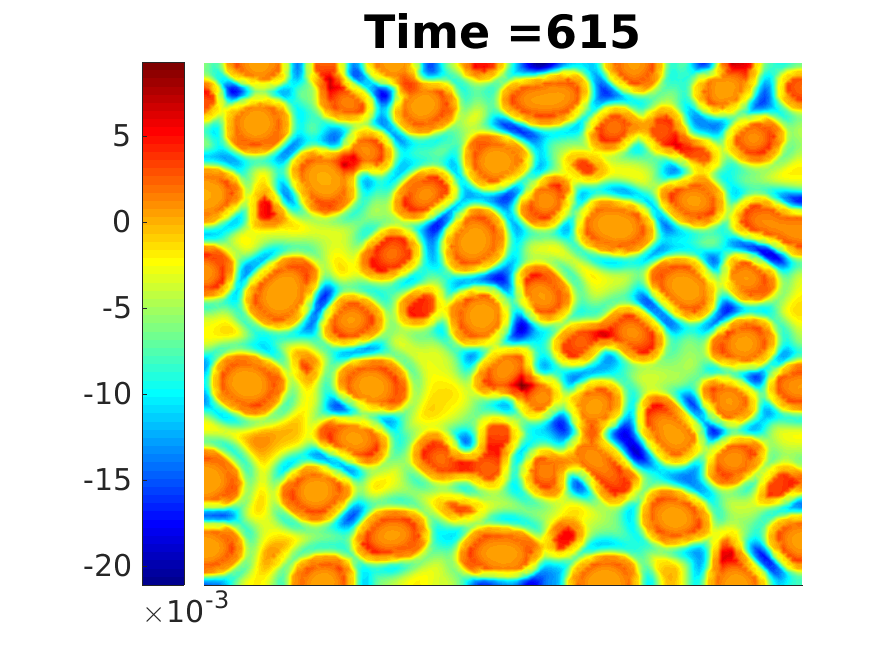}} &
		\hspace{-0.7cm}\subfloat[][]{\includegraphics[trim={0.8cm 0.4cm 0.3cm 0.2cm},clip,scale=0.36]{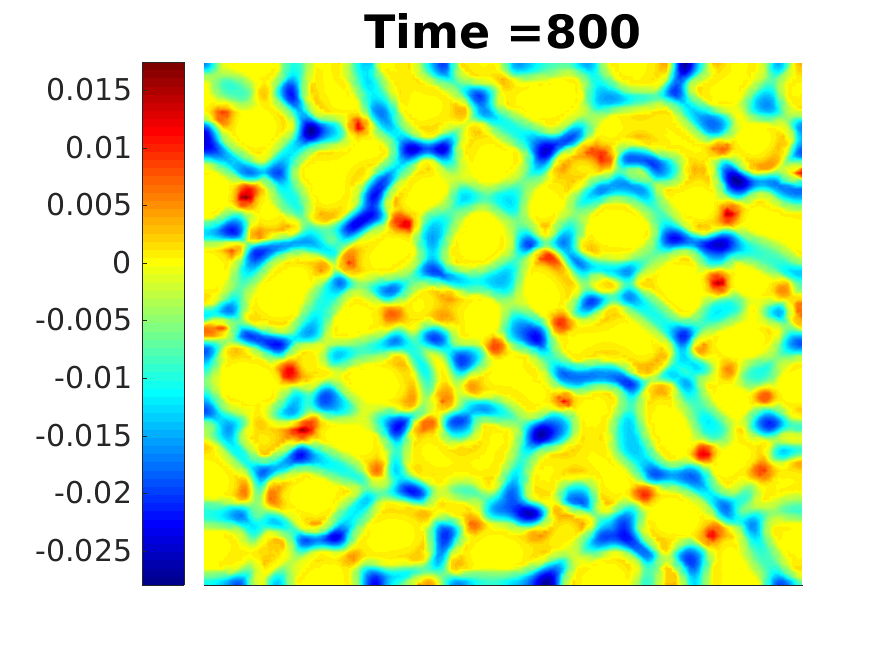}} &
		\hspace{-0.7cm}\subfloat[][]{\includegraphics[trim={0.8cm 0.4cm 0.3cm 0.2cm},clip,scale=0.36]{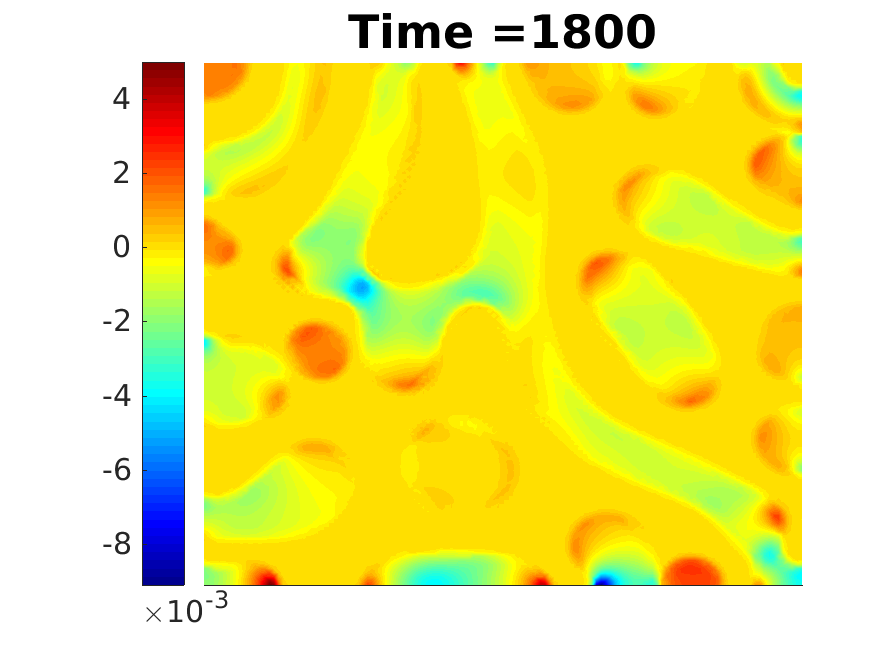}} \\
		\hspace{-0.7cm}\subfloat[][]{\includegraphics[trim={0.8cm 1.1cm 0.3cm 0.2cm},clip,scale=0.36]{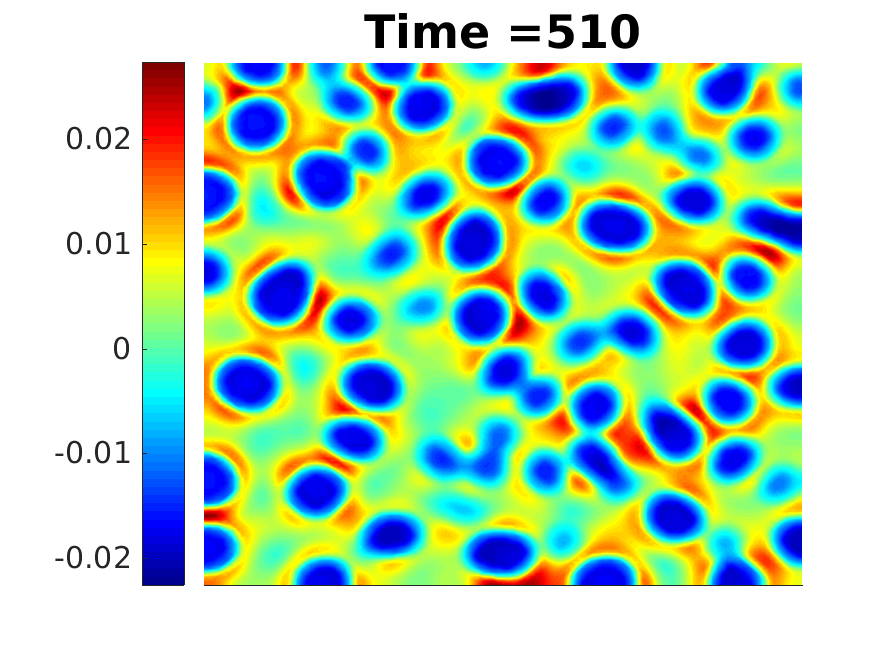}} &
		\hspace{-0.7cm}\subfloat[][]{\includegraphics[trim={0.8cm 1.1cm 0.3cm 0.2cm},clip,scale=0.36]{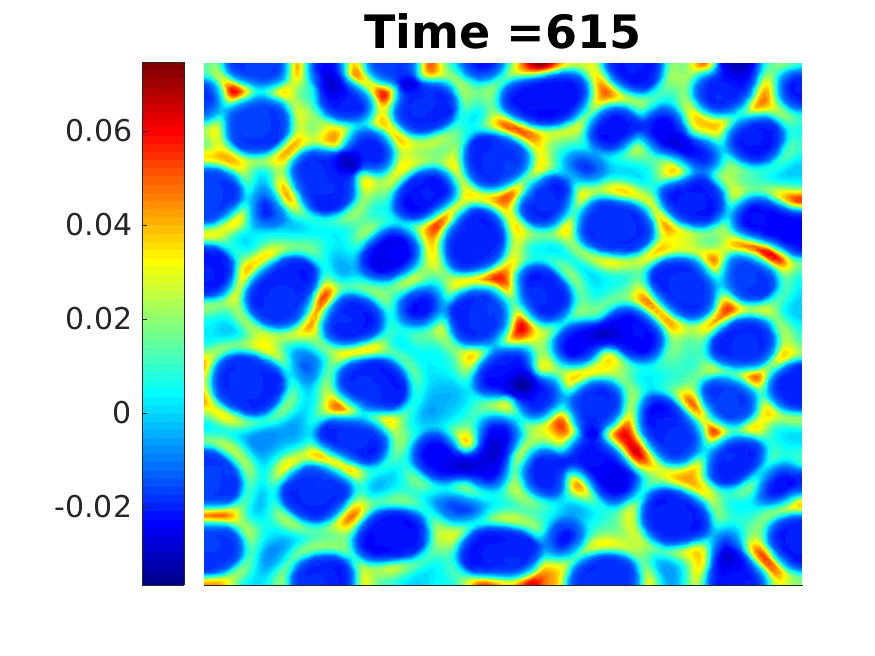}} &
		\hspace{-0.7cm}\subfloat[][]{\includegraphics[trim={0.8cm 1.1cm 0.3cm 0.2cm},clip,scale=0.36]{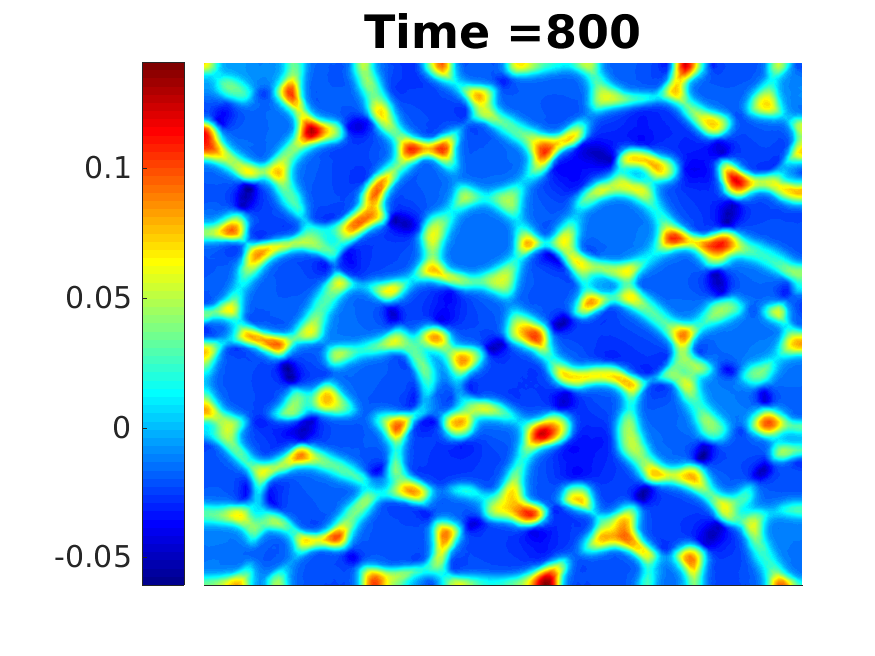}} &
		\hspace{-0.7cm}\subfloat[][]{\includegraphics[trim={0.8cm 1.1cm 0.3cm 0.2cm},clip,scale=0.36]{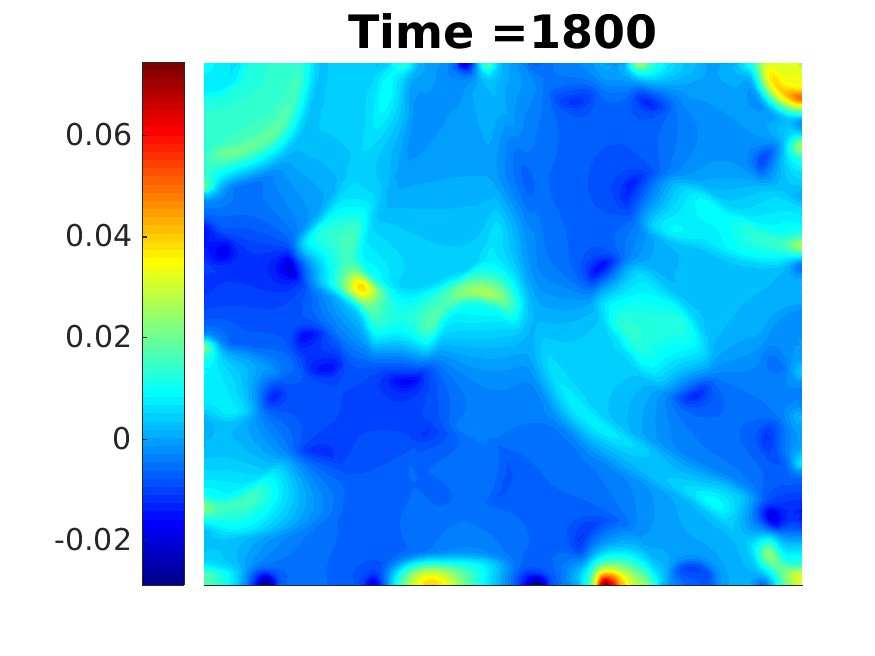}} \\	
		\hspace{-0.7cm}\subfloat[][]{\includegraphics[trim={0.8cm 1.1cm 0.3cm 0.2cm},clip,scale=0.36]{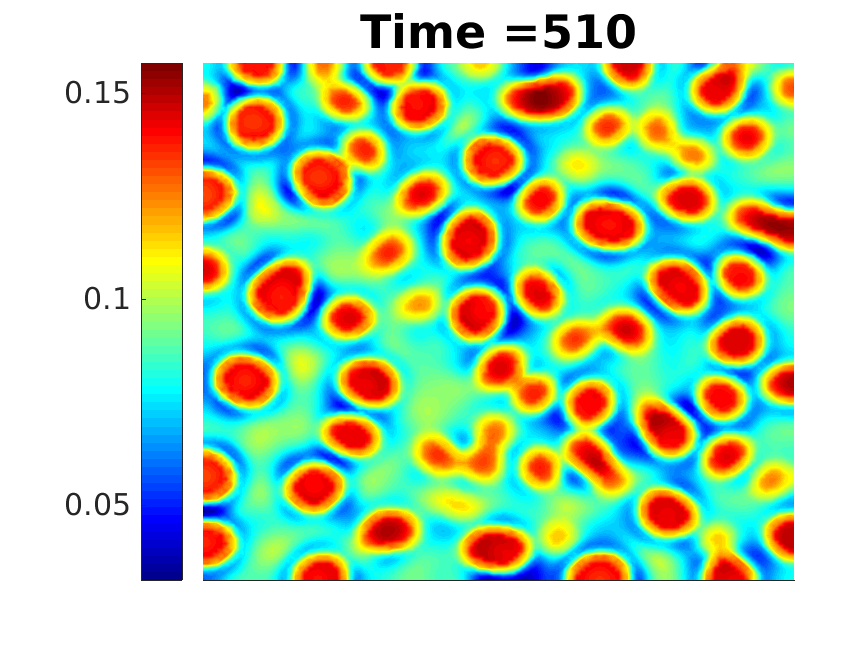}} &
		\hspace{-0.7cm}\subfloat[][]{\includegraphics[trim={0.8cm 1.1cm 0.3cm 0.2cm},clip,scale=0.36]{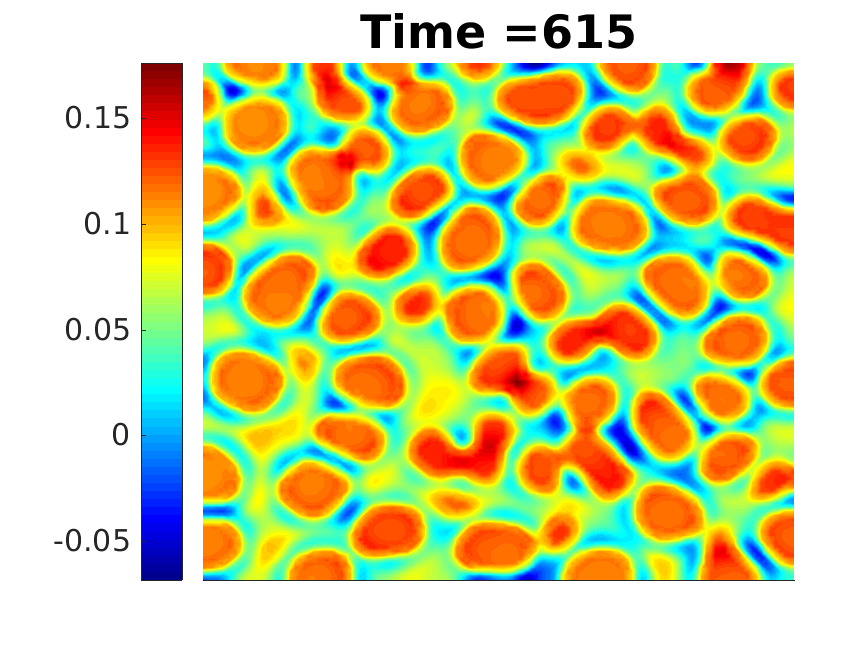}} &
		\hspace{-0.7cm}\subfloat[][]{\includegraphics[trim={0.8cm 1.1cm 0.3cm 0.2cm},clip,scale=0.37]{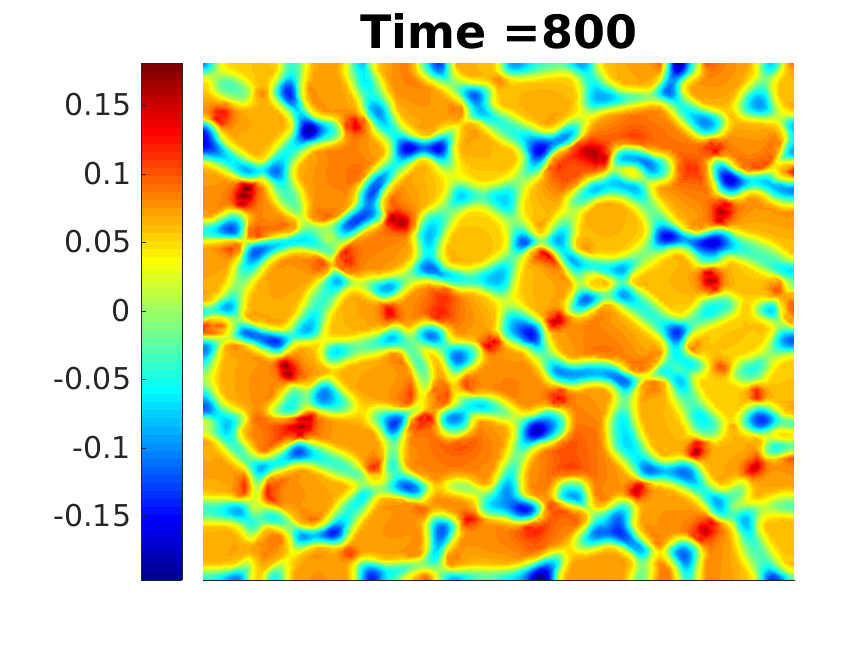}} &
		\hspace{-0.7cm}\subfloat[][]{\includegraphics[trim={0.8cm 1.1cm 0.3cm 0.2cm},clip,scale=0.37]{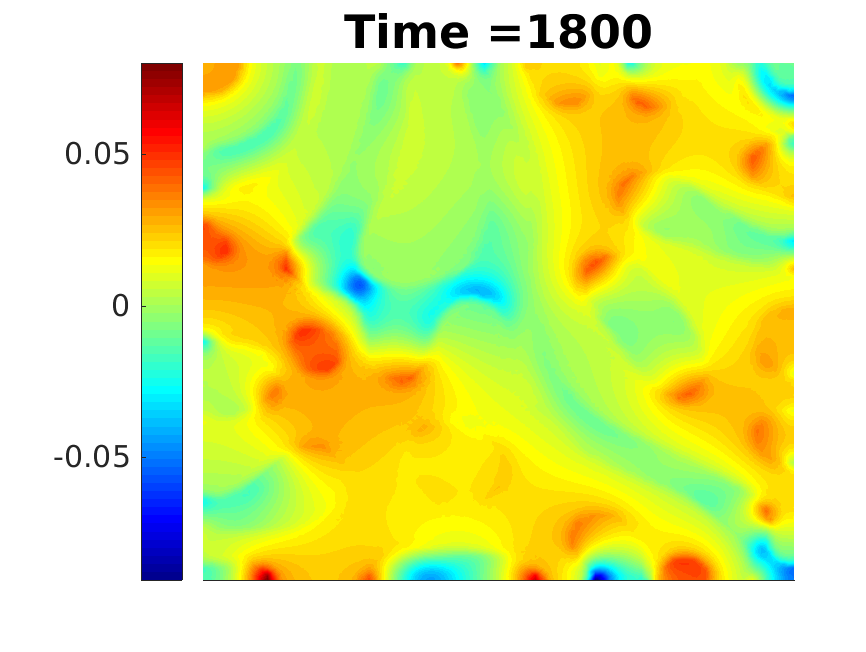}}
	\end{tabular}
	\caption{Experiment 2: Spinodal decomposition, time evolution of the bulk stress $q$ (top), pressure $p$ (middle) and chemical potential $\mu$ (bottom).}
\end{figure}
\vspace{-0.1cm}

\begin{figure}[H]
	\centering
	\begin{tabular}{ccc}
		\subfloat[][]{\includegraphics[trim={0.0cm 0.0cm 0.0cm 0.0cm},clip,scale=0.34]{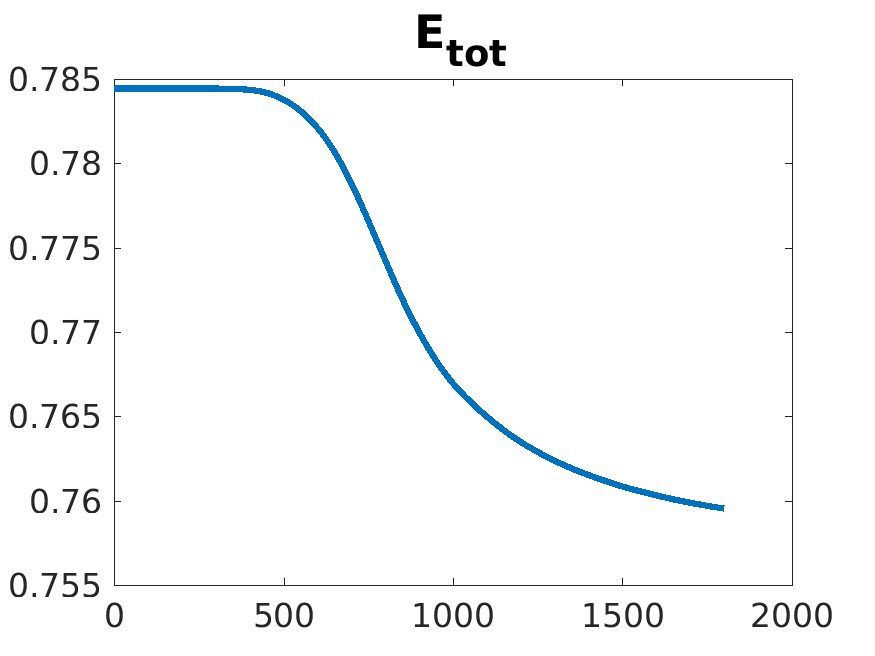}} &
		\subfloat[][]{\includegraphics[trim={0.0cm 0.0cm 0.0cm 0.0cm},clip,scale=0.34]{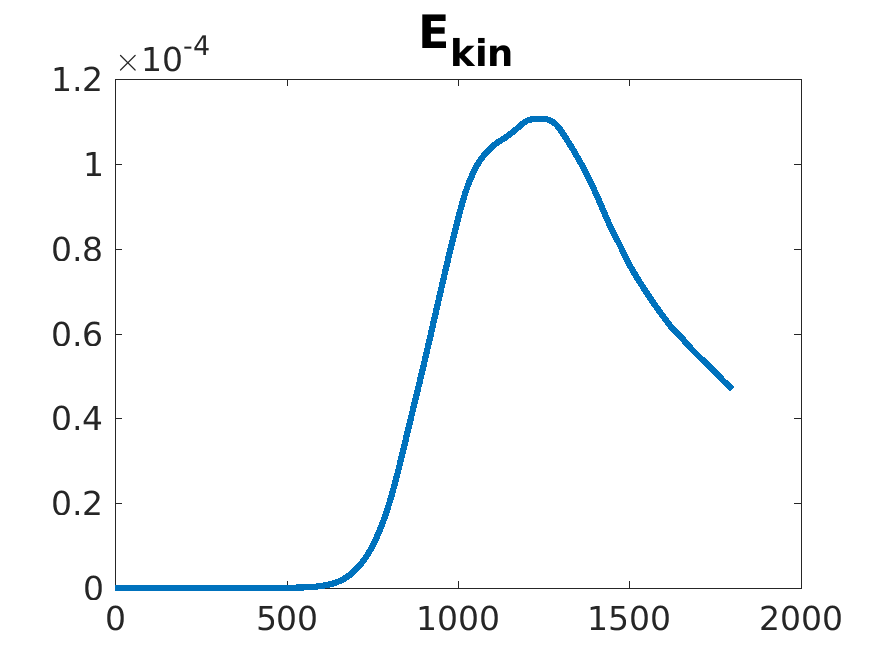}} &
		\subfloat[][]{\includegraphics[trim={0.0cm 0.0cm 0.0cm 0.0cm},clip,scale=0.34]{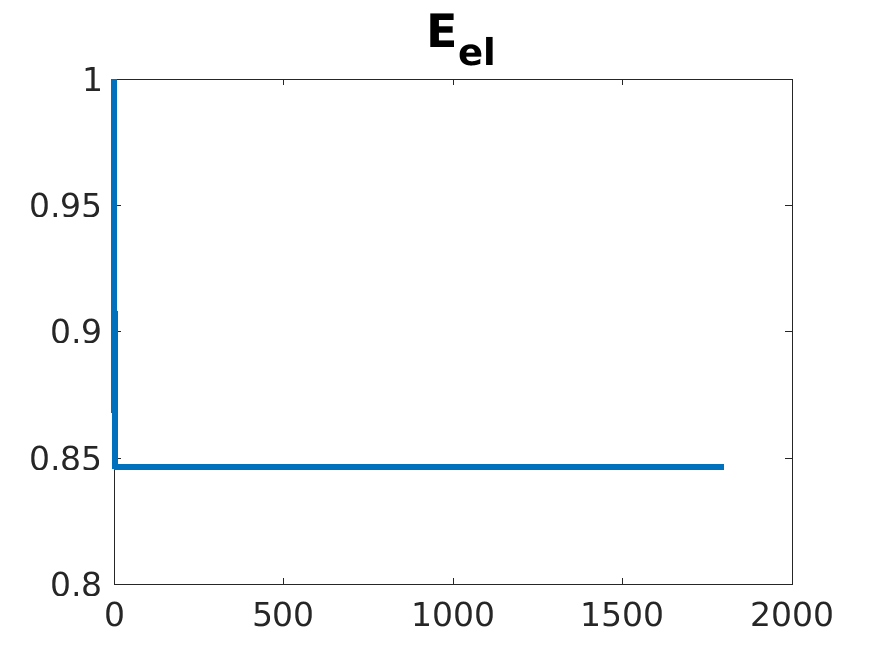}}\\
		\subfloat[][]{\includegraphics[trim={0.0cm 0.0cm 0.0cm 0.0cm},clip,scale=0.34]{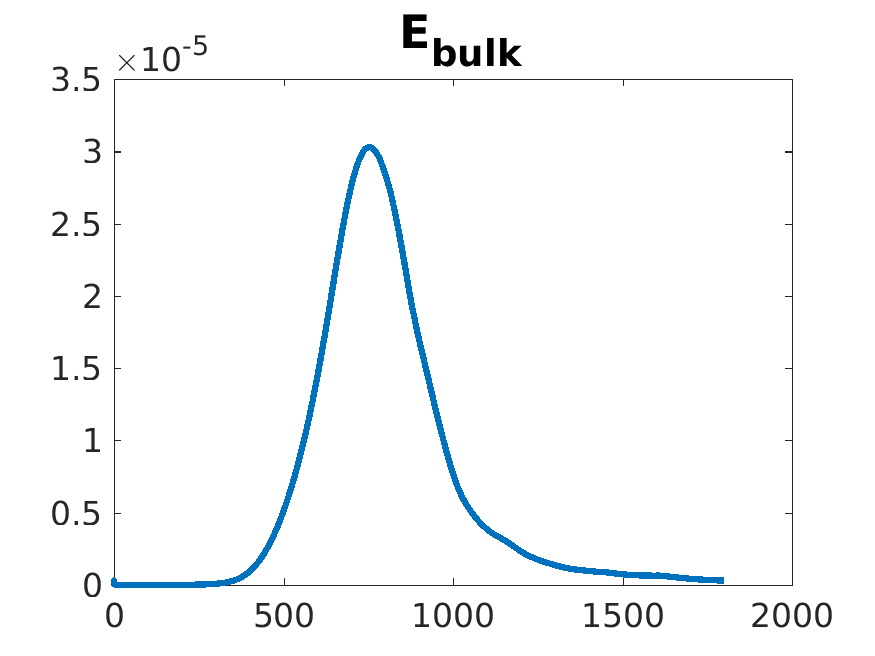}} &
		\subfloat[][]{\includegraphics[trim={0.0cm 0.0cm 0.0cm 0.0cm},clip,scale=0.34]{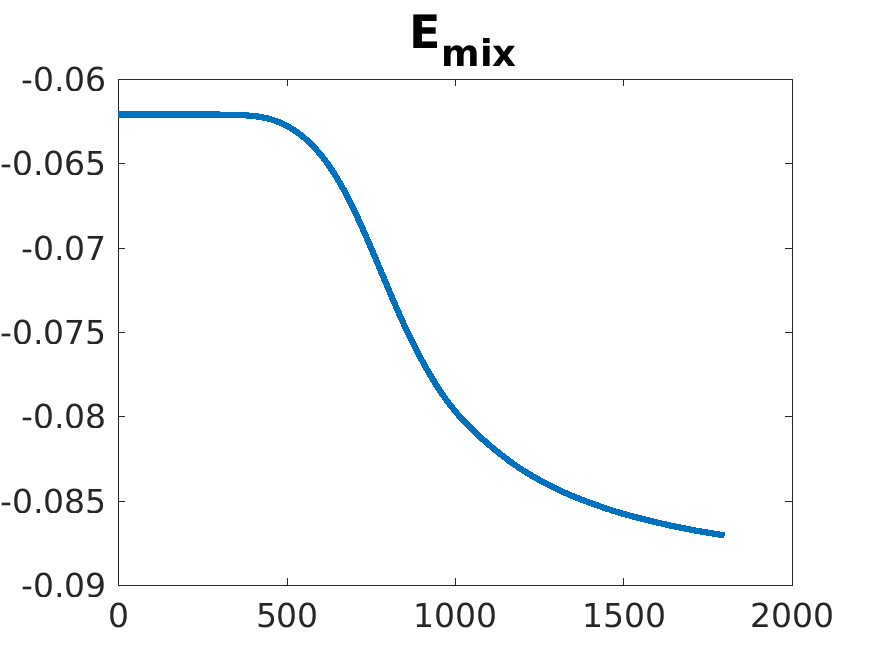}} &	
		\subfloat[][]{\includegraphics[trim={0.0cm 0.0cm 0.0cm 0.0cm},clip,scale=0.34]{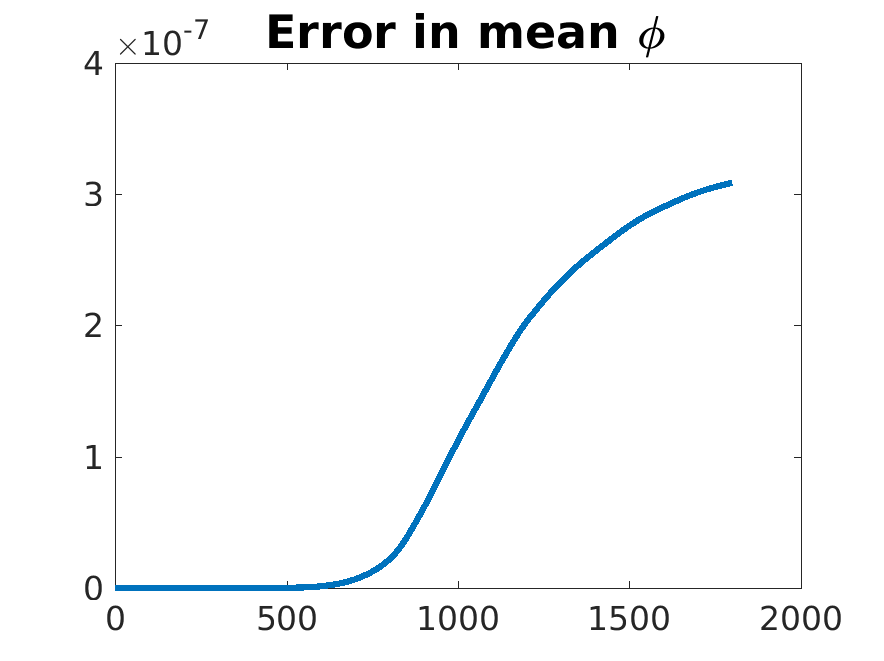}}
	\end{tabular}	
	\caption{Experiment 2: Time evolution of the total energy $E_{tot}$ and the corresponding energy components. The last picture demonstrates that the numerical scheme preserves mass $\frac{1}{\snorm*{\Omega}}\int \phi$ up to small error  of the order $10^{-7}$.}
	\label{fig:exp2en}
\end{figure}
\textbf{Experiment 3:} In this test we want to study the influence of non-zero initial velocity on the dynamics of the system with the Ginzburg-Landau potential. We denote by $\mathds{1}_B$ the characteristic function of a ball with the radius $50$ and midpoint $(64,64)$. The initial conditions are
\begin{align*}
\u_1 =  \frac{y-64}{128}\mathds{1}_B(x,y),  \qquad \u_2 = \frac{64-x}{128}\mathds{1}_B(x,y), \qquad\CC_{11}=\CC_{22} = 2, \CC_{12}=0.5.
\end{align*}
All other initial data are the same as in the previous experiments.\\[0.5em]
In Figure \ref{fig:Phi_rot} we can recognize typical phase separation patterns as in the previous experiments. However, their evolution is much faster than in Experiment 1 which is due to the initial velocity field. In fact the phases occur now at similar times as in Experiment 2. This effect can be related to the so-called shear-induced phase separation.
In Figure \ref{fig:u_rot} we can observe the slow down of the vector field up to $t\approx 750$ and then the dispersion of the initial rotation. At this point the phase separation effects start to dominate the rotation and therefore we observe similar patterns as in Experiment 1 and 2.

\begin{figure}[H]
	\centering
	\begin{tabular}{cccc}
		\subfloat[][]{\includegraphics[trim={0.48cm 1.1cm 1.2cm 0.2cm},clip,scale=0.45]{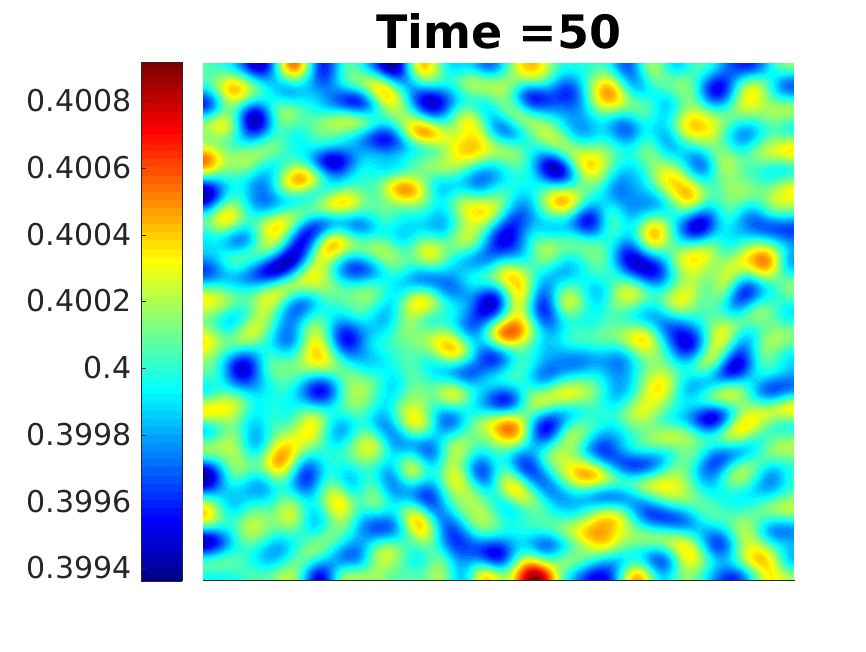}} &
		\subfloat[][]{\includegraphics[trim={0.7cm 1.1cm 1.2cm 0.2cm},clip,scale=0.45]{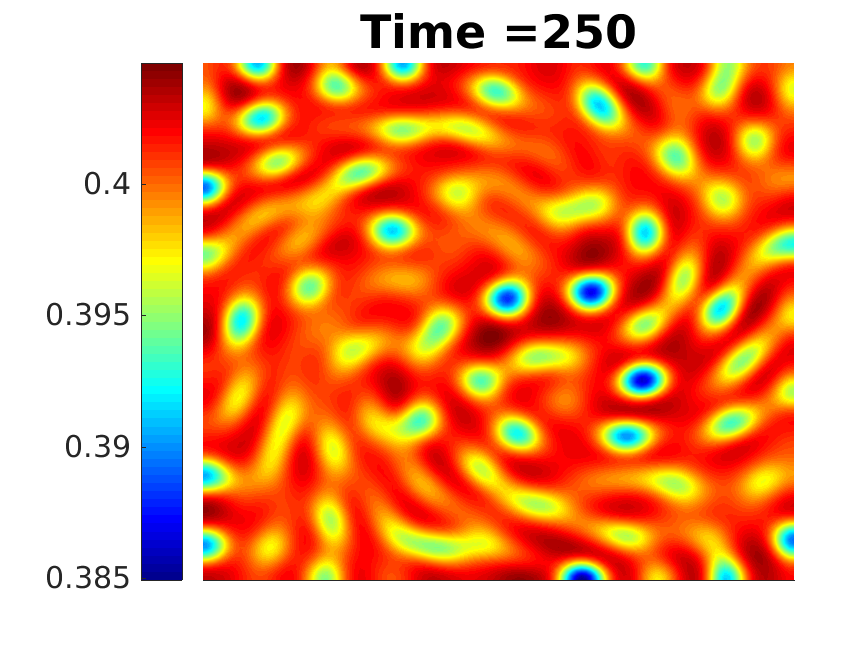}} &
		\subfloat[][]{\includegraphics[trim={0.7cm 1.1cm 1.2cm 0.2cm},clip,scale=0.45]{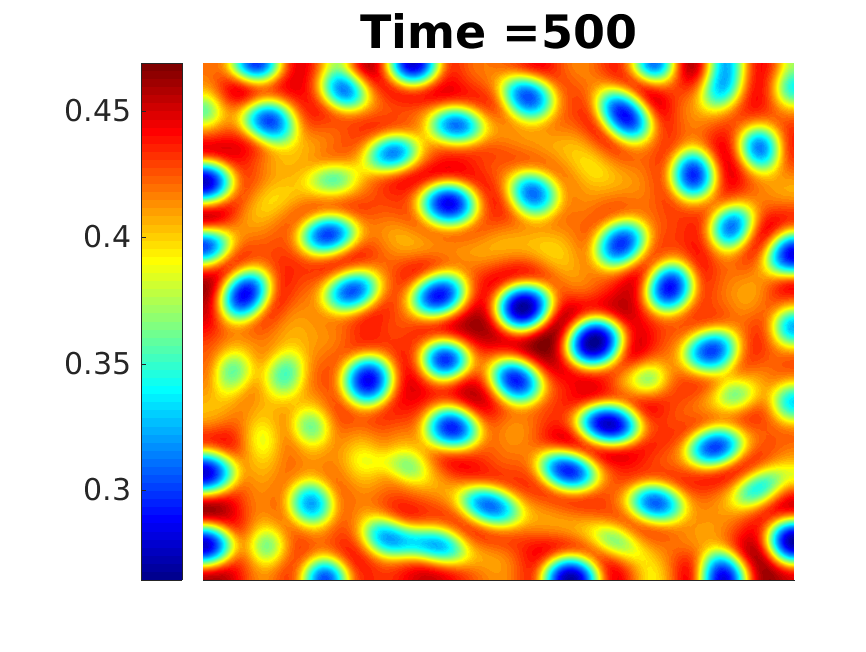}} \\
		\subfloat[][]{\includegraphics[trim={0.5cm 1.1cm 1.2cm 0.2cm},clip,scale=0.45]{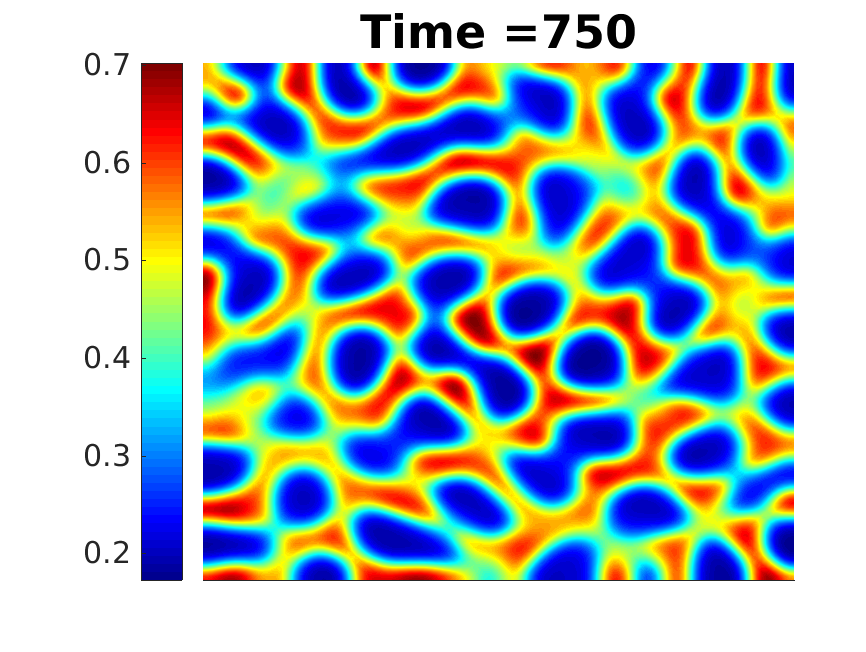}} &
		\subfloat[][]{\includegraphics[trim={0.7cm 1.1cm 1.2cm 0.2cm},clip,scale=0.45]{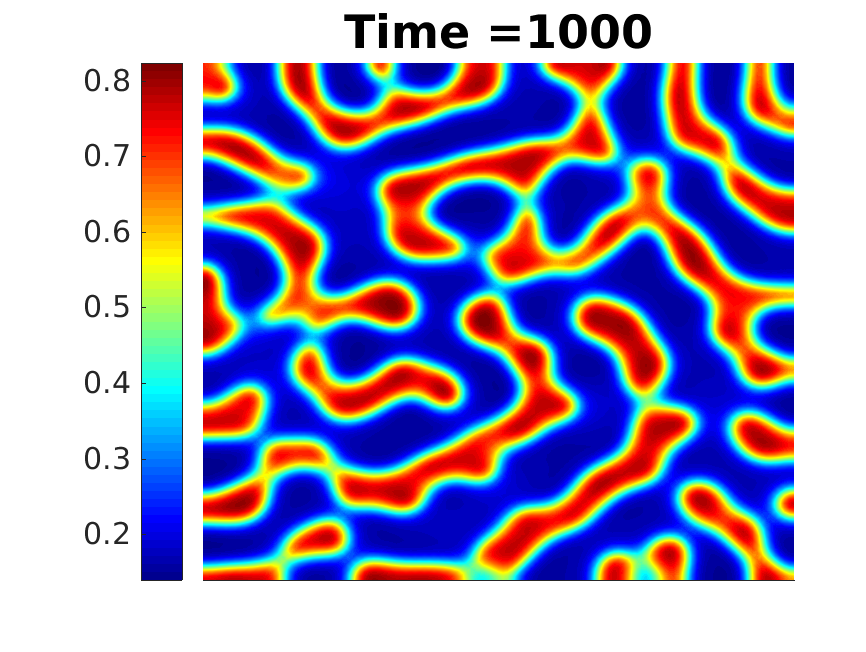}} &
		\subfloat[][]{\includegraphics[trim={0.7cm 1.1cm 1.2cm 0.2cm},clip,scale=0.45]{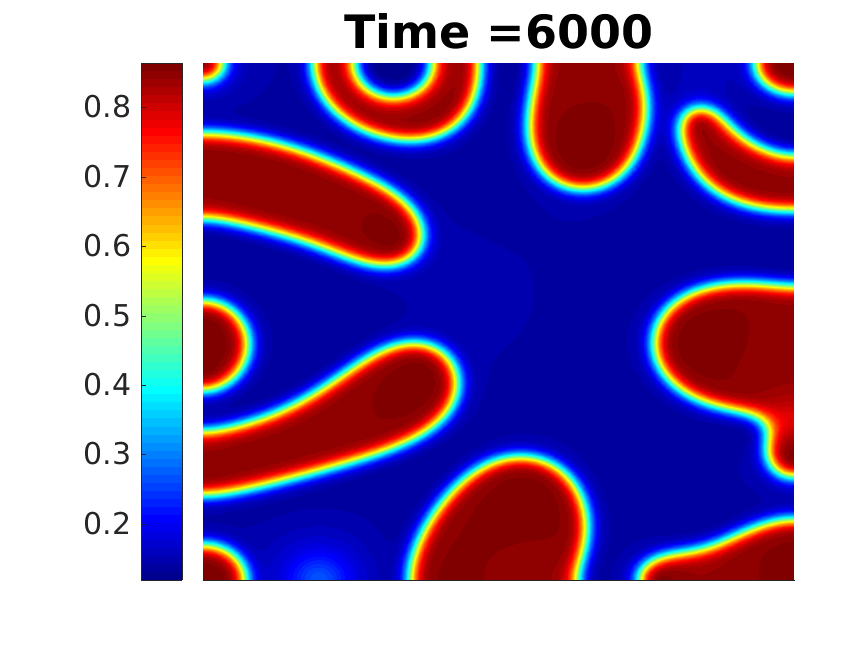}}
	\end{tabular}
	\caption{Experiment 3: Spinodal decomposition, time evolution of the volume fraction $\phi$.}
	\label{fig:Phi_rot}
\end{figure}

\vspace{-0.1cm}
\begin{figure}[H]
	\centering
	\begin{tabular}{cccc}
		\hspace{-0.7cm}\subfloat[][]{\includegraphics[trim={0.8cm 1.cm 0.3cm 0.2cm},clip,scale=0.37]{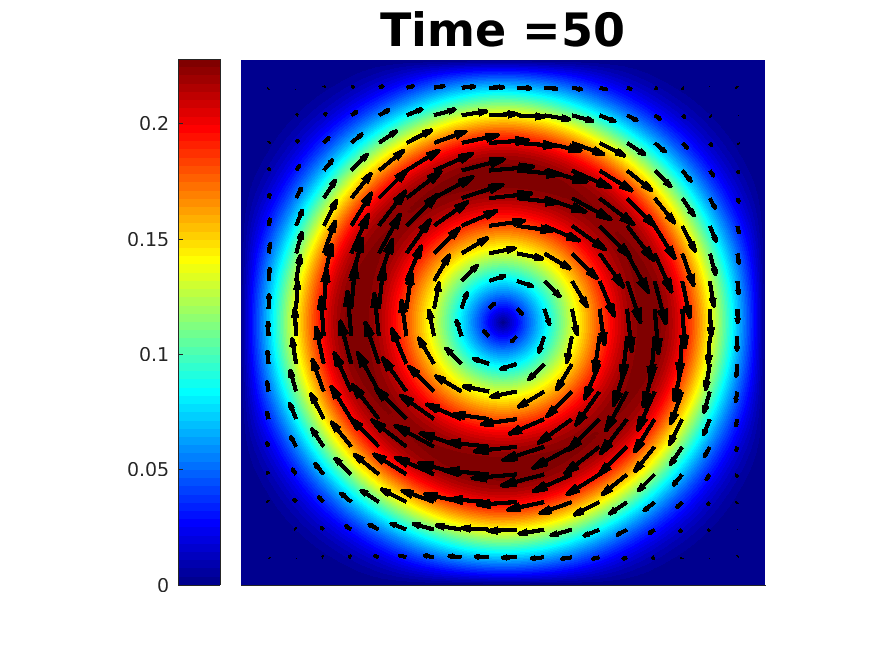}} &
		\hspace{-0.7cm}\subfloat[][]{\includegraphics[trim={0.8cm 1.cm 0.3cm 0.2cm},clip,scale=0.37]{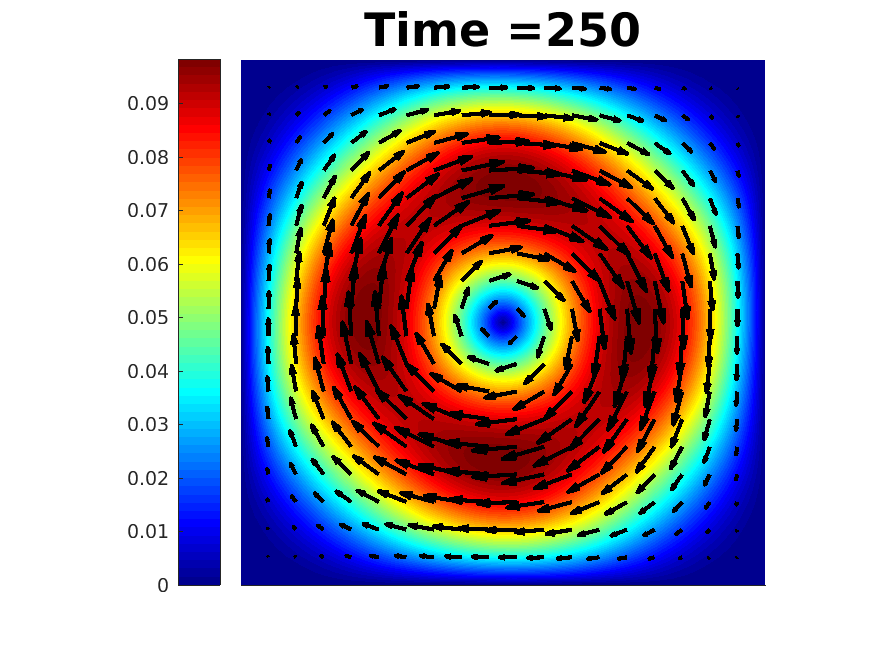}} &
		\hspace{-0.7cm}\subfloat[][]{\includegraphics[trim={0.8cm 1.cm 0.3cm 0.2cm},clip,scale=0.37]{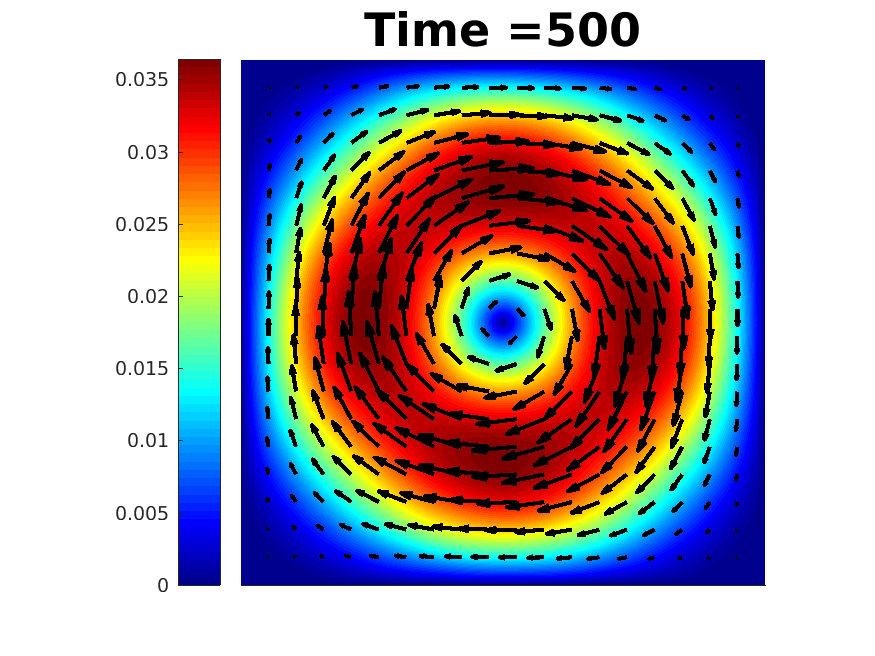}} &
		\hspace{-0.7cm}\subfloat[][]{\includegraphics[trim={0.8cm 1.cm 0.3cm 0.2cm},clip,scale=0.37]{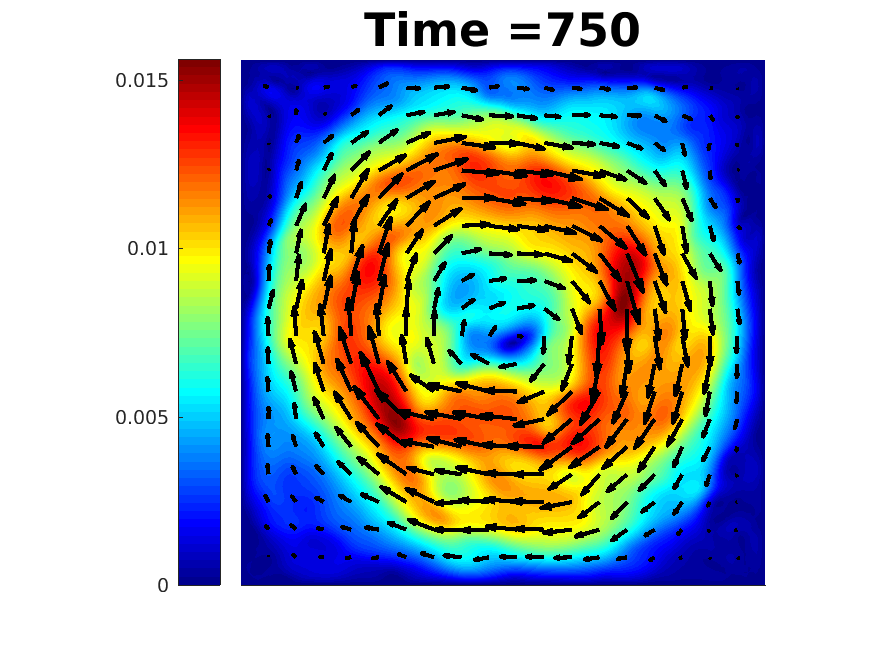}}\\
		\hspace{-0.7cm}\subfloat[][]{\includegraphics[trim={0.8cm 1.cm 0.3cm 0.2cm},clip,scale=0.37]{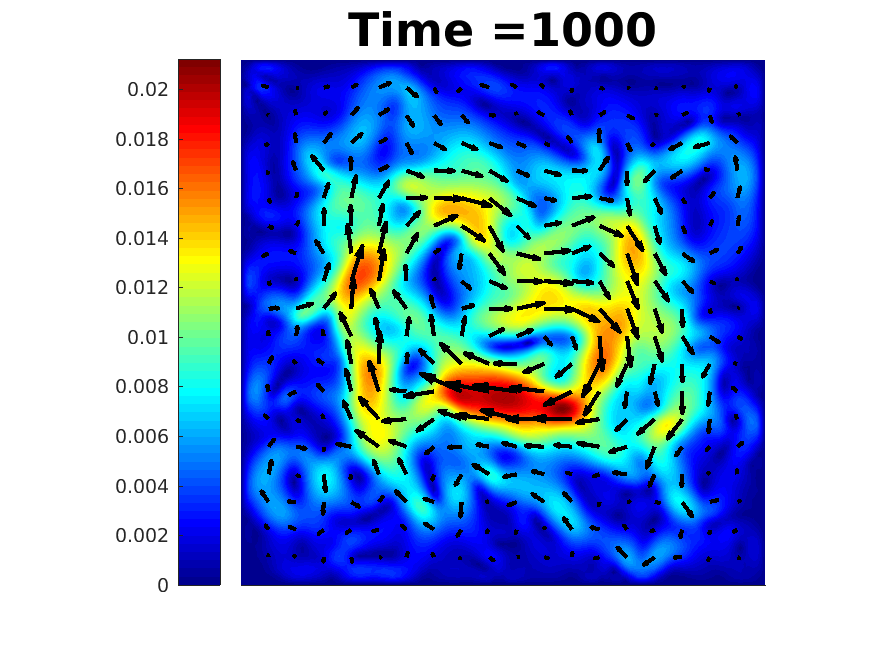}} &
		\hspace{-0.7cm}\subfloat[][]{\includegraphics[trim={0.8cm 1.cm 0.3cm 0.2cm},clip,scale=0.37]{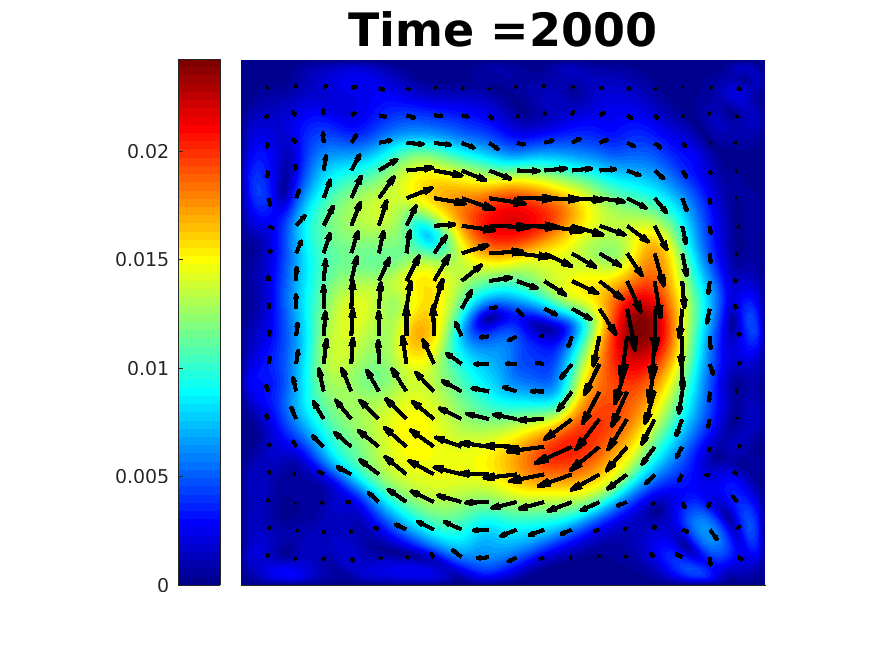}} &
		\hspace{-0.7cm}\subfloat[][]{\includegraphics[trim={0.8cm 1.cm 0.3cm 0.2cm},clip,scale=0.37]{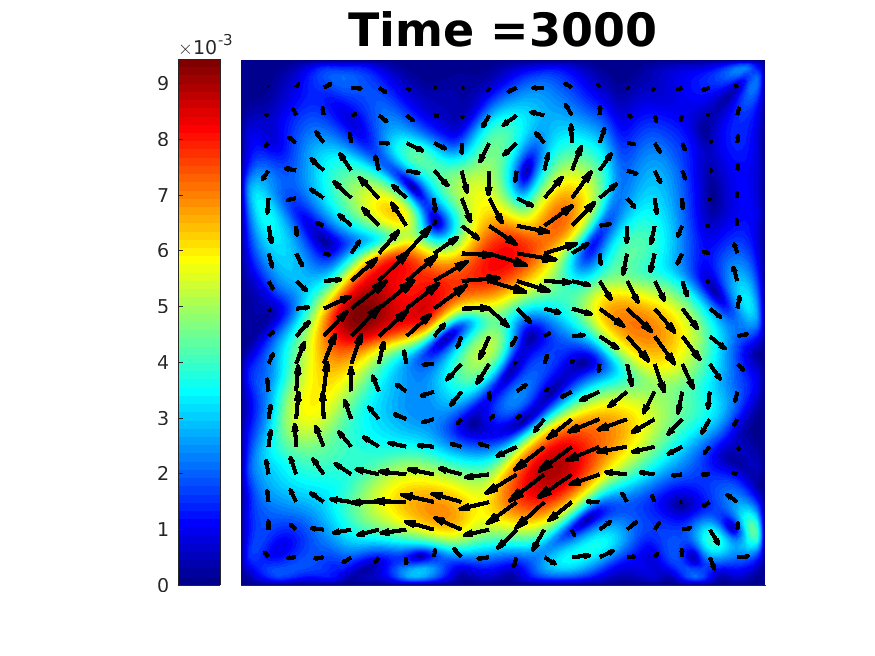}} &
		\hspace{-0.7cm}\subfloat[][]{\includegraphics[trim={0.8cm 1.cm 0.3cm 0.2cm},clip,scale=0.37]{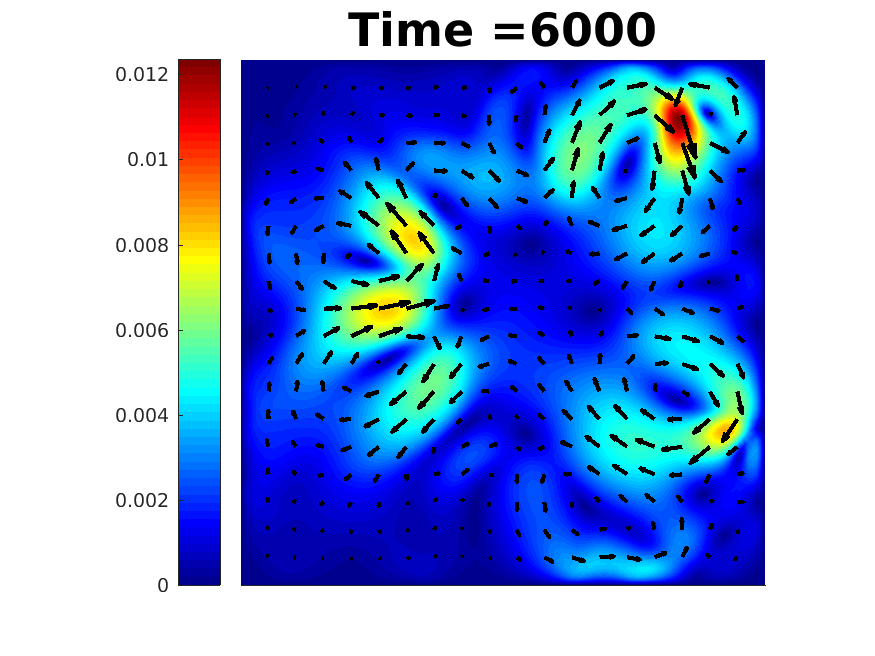}}\\
	\end{tabular}
	\caption{Experiment 3: Spinodal decomposition, time evolution of the velocity norm $|\u|_2$.}
	\label{fig:u_rot}
\end{figure}

\vspace{-1cm}

\begin{figure}[H]
	\centering
	\begin{tabular}{ccc}
		\subfloat[][]{\includegraphics[trim={0.0cm 0.0cm 0.0cm 0.0cm},clip,scale=0.34]{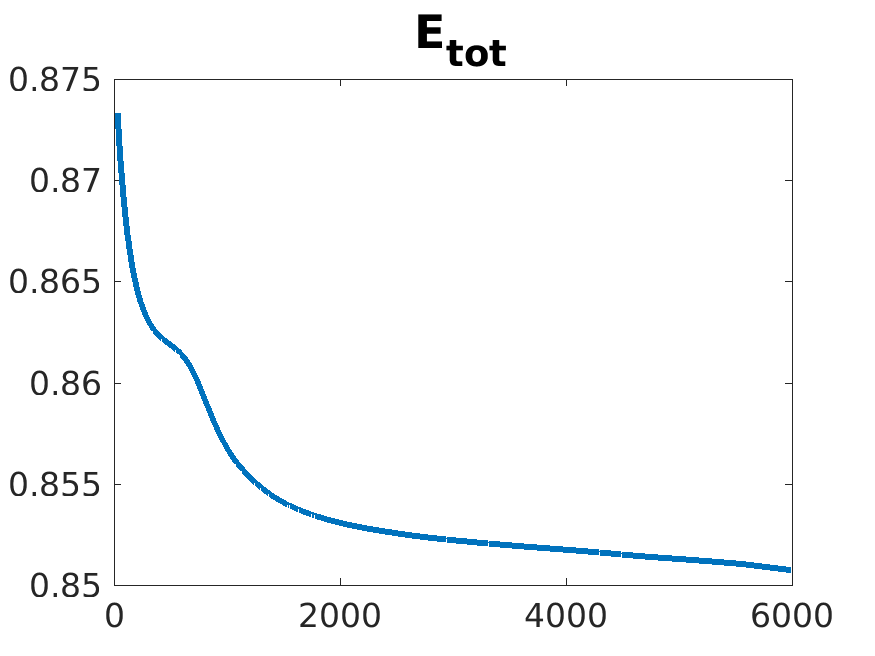}} &
		\subfloat[][]{\includegraphics[trim={0.0cm 0.0cm 0.0cm 0.0cm},clip,scale=0.34]{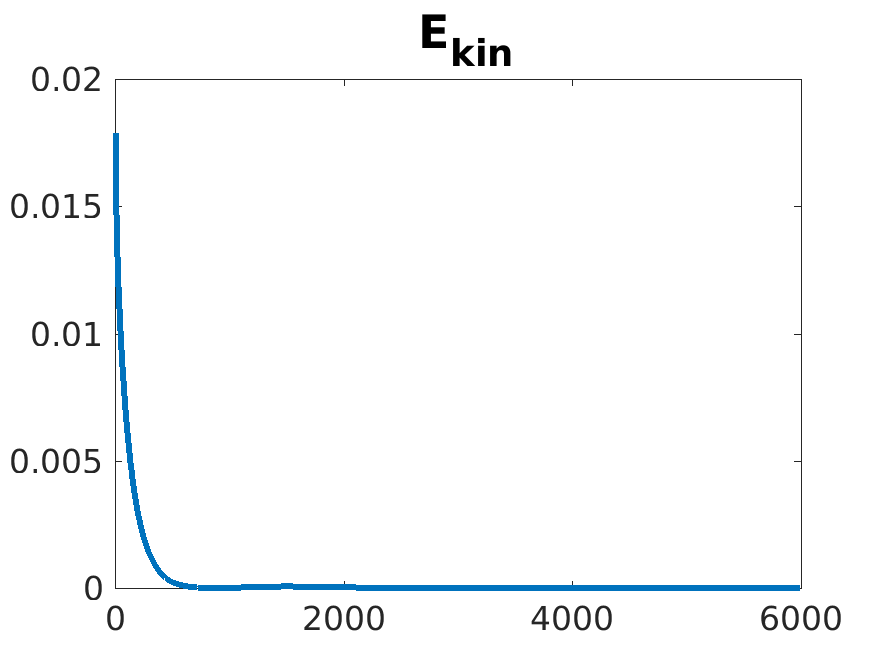}} &
		\subfloat[][]{\includegraphics[trim={0.0cm 0.0cm 0.0cm 0.0cm},clip,scale=0.34]{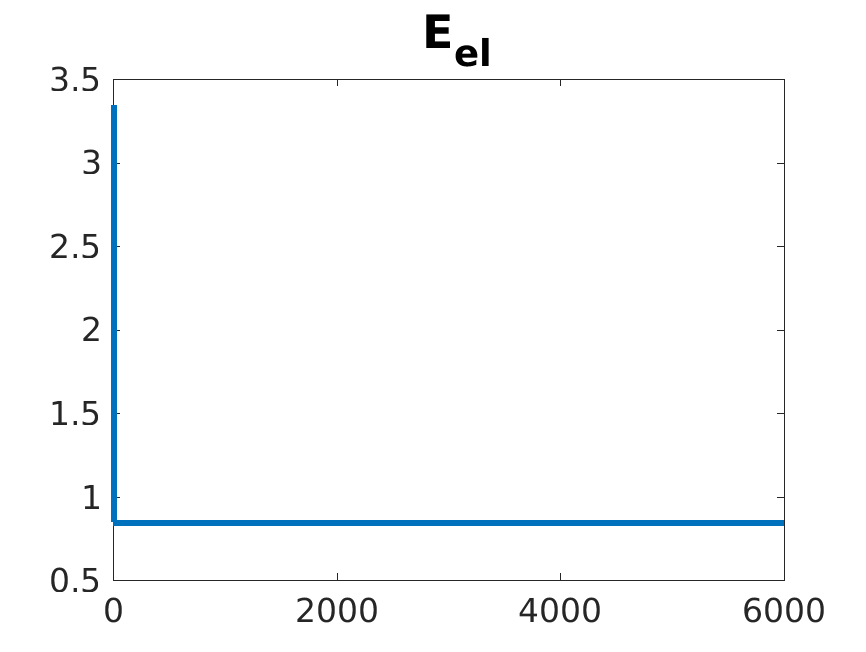}}\\
		\subfloat[][]{\includegraphics[trim={0.0cm 0.0cm 0.0cm 0.0cm},clip,scale=0.34]{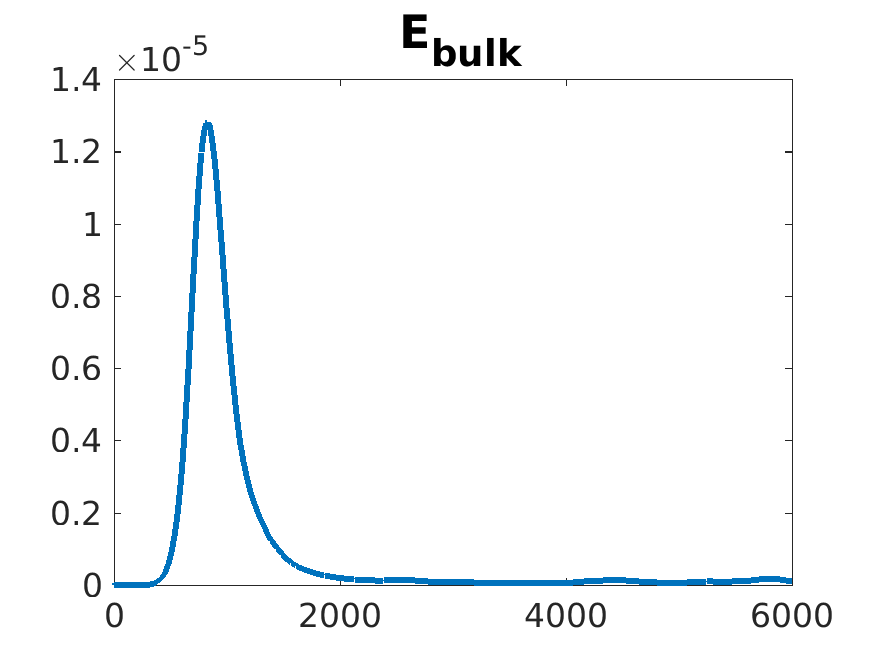}} &
		\subfloat[][]{\includegraphics[trim={0.0cm 0.0cm 0.0cm 0.0cm},clip,scale=0.34]{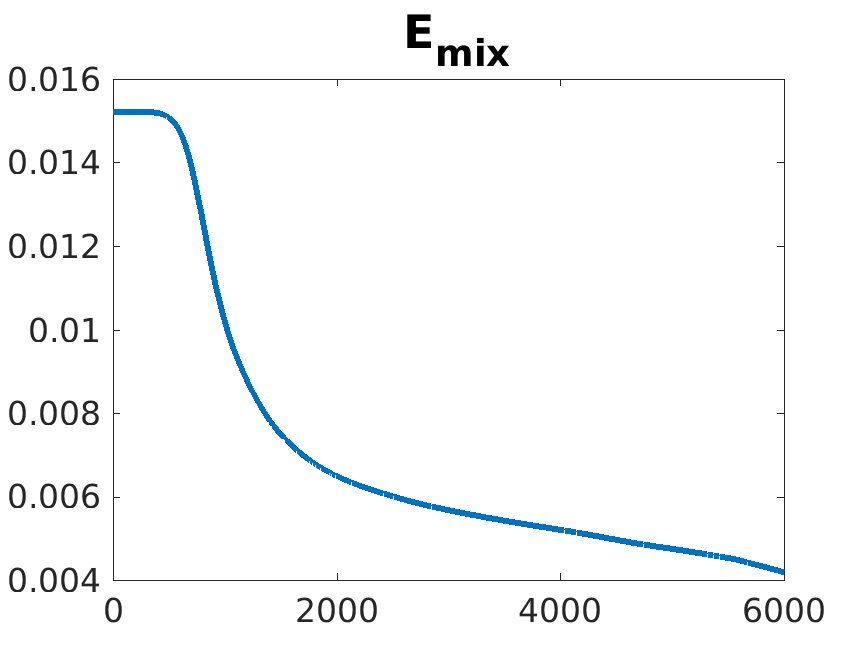}} &	
		\subfloat[][]{\includegraphics[trim={0.0cm 0.0cm 0.0cm 0.0cm},clip,scale=0.34]{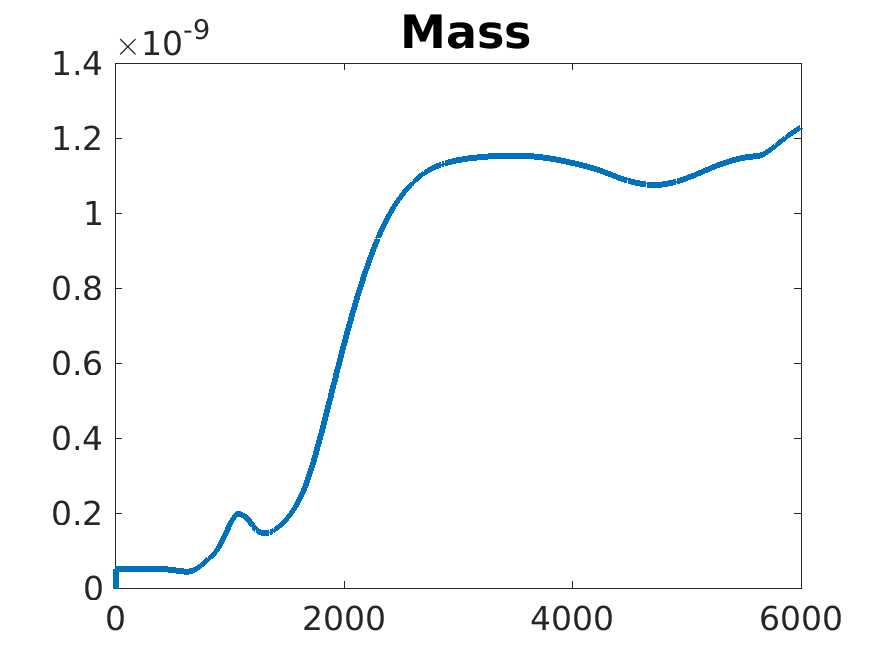}}	
	\end{tabular}
	\caption{Experiment 3: Time evolution of the total energy $E_{tot}$ and the corresponding energy components. The last picture demonstrates that the numerical scheme preserves mass $\frac{1}{\snorm*{\Omega}}\int \phi$ up to small error $\frac{1}{\snorm*{\Omega}}\int \(\phi(x,0) - \phi(x,t)\) \dx$ of the order $10^{-9}$.}
	\label{fig:exp3en}
\end{figure}

\section{Conclusion and Outlook}

In this paper we have analyzed mathematical properties of the viscoelastic phase separation model (\ref{eq:full_model}) that has been proposed in \cite{Zhou.2006}.
Instead of the Oldroyd-B equations for the time evolution of viscoelastic stress tensor we have used a more general Peterlin model as proposed in \cite{LukacovaMedvidova.2017b}.\\
A regular potential for the phase separation, i.e.~the Ginzburg-Landau potential, allowed us to prove the existence of global weak solutions to (\ref{eq:full_model}). To this end we had to modify slightly the original model of \cite{Zhou.2006} by introducing a parameter $\kappa\in[0,1)$ in front of the viscous coupling terms in the equations for volume fraction $\phi$ and bulk stress $q$, see $(\ref{eq:full_model})_1, (\ref{eq:full_model})_2$, respectively. We have derived the necessary a priori estimates and  passed to the limit in the Galerkin approximations. Applicability of the model (\ref{eq:full_model}) to describe complex dynamics of spinodal decomposition has been documented by numerical experiments presented in Section 9, see, also \cite{Strasser.2018}. Our numerical experiments demonstrate that the choice $\kappa=1$ makes no problem for the existence of a weak solution in practice. Therefore, it will be interesting to investigate in the future whether analogous existence results hold also for $\kappa\rightarrow 1$.\\
Building on the present study our next goal is to investigate a degenerate model with the Flory-Huggins potential and
degenerate mobilities, see \cite{Brunk.b}.

\section*{Acknowledgement}
This research was supported by the German Science Foundation (DFG) under the Collaborative Research Center TRR~146 Multiscale Simulation Methods for Soft Matters (Project~C3) and partially by the
VEGA grant~1/0684/17 and the Mainz Institute of Multiscale Modeling (M3odel). We would like to thank B.~D\"unweg, D.~Spiller and J.~Ka{\softt}uchov\'a for fruitful discussions on the topic.

\section{Appendix}
\subsection{Model derivation}
The aim of this section is to discuss a derivation of the viscoelastic phase separation model~(\ref{eq:full_model}). Systematic model derivation and further discussions on various model variants can be found in our recent works \cite{burkhard, brunk}.
A complete rigorous derivation from a microscopic description is  however not yet available in the literature.

To keep our paper self-consistent we present important steps of the model derivation. First,
as it is standardly used, cf. \cite{Grmela.1997b,Grmela.1997,Zhou.2006}, the derivation of a thermodynamically consistent model is divided into the description  of reversible and irreversible dynamics. The reversible part can be obtained by the virtual work principle \cite{Doi.2007}, while the irreversible part will be treated in a dissipative setting as in \cite{Groot.2016}.  Our further discussion is  structured in the following way.
\begin{enumerate}
	\item Discussion on the free energy and introduction of a conservative model.
	\item Consideration of non-equilibrium dynamics using the relative velocity $\ww$.
	\item Consideration of dissipative effects.
\end{enumerate}

\subsection{Free energy and a conservative model}
First, we review the energy of the original viscoelastic phase separation model from \cite{Zhou.2006} rewritten using the conformation tensor~$\CC.$ The total energy reads
\begin{align*}
	E &= \int F(\phi) + \frac{c_0}{2}|\nabla\phi|^2 + \int \frac{1}{2}|\u|^2 + \int \frac{1}{2}q^2 + \frac 1 2 \int \trr{\CC},
%\int \frac{1}{4}\trr{\CC}^2 - \frac{1}{2}\trr{\ln(\CC)} ,
\end{align*}
where $F(\phi)$ denotes an appropriate mixing potential such as (\ref{eq:ginz}) or (\ref{eq:fhpot}).
The first integral represents the mixing energy, while the second stands for the total kinetic energy.
We note that the last integral represents the free energy modeled by the viscoelastic conformation tensor.  Following \cite{Tanaka.} the second last term models the energy contribution due to the bulk stress
$q\I.$  The latter originates from (microscopic) intermolecular attractive interactions and can be seen as
 additional viscoelastic pressure. We refer a reader to our recent work \cite{brunk} for a more detailed and physically motivated
discussion on the connection between the conformation stress tensor $\CC$ and $q\I$. Indeed, the conformation tensor $\CC$ is connected to the viscoelastic effects of the volume-averaged velocity $\u$ and $q\I$ is connected to the relative velocity $\ww=\u_p-\u_s.$ Here $\u_p, \u_s$ are the velocities of polymer and solvent phases, respectively.

We proceed with formulating the total energy of our desired model by
\begin{align}
\label{eq_myenergy}
E &= \int F(\phi) + \frac{c_0}{2}|\nabla\phi|^2 + \int \frac{1}{2}|\u|^2 + \int \frac{1}{2}q^2 + \int \frac{1}{4}\trr{\CC}^2 - \frac{1}{2}\trr{\ln(\CC)}.
%&= \int\mathcal{F}(\phi,\nabla\phi) + \int\frac{1}{2}|\u|^2 + \int\frac{1}{2}q^2 + \int \frac{1}{4}\trC^2 - \frac{1}{2}\trr{\ln(\CC)}
\end{align}
%with $\mathcal{F}(\phi,\na\phi):=\frac{c_0}{2}\snorm{\na\phi}^2+F(\phi)$.
We note that the quadratic term in the last integral results from
a nonlinear dependence of the viscoelastic stress tensor
on the conformation tensor and
the term $-\trr{\ln(\CC)}$  represents the free energy of non-interacting macro-molecules, see \cite{Wap} and \cite{lelievre}. \blue{
This choice of the elastic energy yields a phase separation model, where viscoelastic effects are described by the Peterlin model. The latter was extensively studied in our recent
works~\cite{LukacovaMedvidova.2015, LukacovaMedvidova.2017b, Mizerova.2015} both from the analytical and numerical point of view.
We note that this viscoelastic model has been also derived as a rigorous closure of a kinetic model in \cite{gwiazda}.
The derivation of a full viscoelastic phase separation model presented below is general and applicable also to other choices of the elastic energy, see, e.g.,~\cite{Masmodui} for the FENE-P model or \cite{LukacovaMedvidova.2017c} for a generalized Peterlin model. Using such generalizations within a full phase separation model is an interesting topic for future research.}

The conservative dynamics of viscoelastic phase separation can be derived applying the Poisson bracket formalism, as presented in \cite{burkhard}. In general, one should derive a compressible model via the standard Poisson brackets and afterwards apply a reduction to an incompressible model, similar to a derivation presented in \cite{burkhard}.
%A direct derivation of the incompressible model with the standard Poisson brackets technique is also possible, but more involved.
Such an approach yields the following basic equations, see \cite{brunk},
\begin{align}
	\pd \phi &+ \u\cdot\na\phi = 0,\notag\\
	\pd \u &+ (\u\cdot\na\u) - \na p = -c_0\div{\na\phi\otimes\na\phi} - \div{2\CC\frac{\delta E}{\delta\CC}},\quad  \div{\u}=0,\notag \\
	\pd q &+ \u\cdot\na q = 0, \notag\\
	\frac{\D\CC}{\Dtt} \equiv \pd \CC &+ (\u\cdot\na)\CC - \na\u\CC -\CC\na\u^\top = 0. \label{eq:app_basic_conservative}
\end{align}
%Here $\phi$ denotes the volume fraction of the polymer, $\u$ the volume-averaged velocity, $q\I$ the bulk stress and $\CC$ the viscoelastic conformation tensor.
It can be shown that formally the model conserves the free energy given by \eqref{eq_myenergy}. Clearly, \blue{model~\eqref{eq:app_basic_conservative}} only describes the reversible part of the dynamics.

\subsection{Closure relation for the relative velocity}

In what follows we introduce the relative velocity $\ww$. The conservative model \eqref{eq:app_basic_conservative} can be understand as a reduced model with $\ww=0$. Following our derivation in \cite{brunk} $q\I$ is associated with the divergence of $\w$, while the viscoelastic stress tensor $\TT$ arising from $\CC$, i.e. $\TT=\trC\CC-\I$, is associated to the deformation gradient of the velocity $\u$. Finally, due to the divergence-freedom of the velocity $\u$ there is no relevant stress associated with $\div{\u}$. This will be include via the linear term $A_1(\phi)\div{\ww}$ into $q$. A short calculation shows that for the upper convected derivative $\frac{\D\I}{\Dtt}=2\Du$, hence an evolution equation for the viscoelastic stress tensor $\TT$ already includes a dependence on the deformation gradient.

Using the relative velocity we obtain the following equations from \eqref{eq:app_basic_conservative}
\begin{align}
	\pd \phi &+ \u\cdot\na\phi = -\textrm{div}\big(\phi(1-\phi)\ww\big),\notag\\
	\pd \u &+ (\u\cdot\na\u) - \na p = -c_0\div{\na\phi\otimes\na\phi} - \div{\trC\CC},\quad \div{\u}=0,\notag \\
	\pd q &+ \u\cdot\na q = A_1(\phi)\div{\ww}, \notag\\
	\pd \CC &+ (\u\cdot\na)\CC - \na\u\CC -\CC\na\u^\top = 0. \label{eq:basics_model_rel}
\end{align}

We already know that the model is conservative for $\ww=0$.
In order to close the system we will choose a suitable closure relation for $\ww$, which is  based on the second law of thermodynamics. Computing the change of the free energy \eqref{eq_myenergy} yields
\begin{align}
	\td E(\phi,q,\u,\CC) &= \int_\Omega \phi(1-\phi)\ww\na\frac{\delta E}{\delta\phi} - \ww\cdot\na(A(\phi)q) \notag\\
	&= \int_\Omega \ww\cdot \Big[\phi(1-\phi)\na\frac{\delta E}{\delta\phi} - \na(A(\phi)q) \Big]. \label{eq:energy_dissipation}
\end{align}
Hence, the above expression is non-negative for $\ww=-[\phi(1-\phi)\na\frac{\delta E}{\delta \phi} - \na(A(\phi)q)]$. We again refer to \cite{brunk} where we have showed that such a term arises even in more general processes based on the relative velocity $\ww$. Note that the model reduction by choosing a closure relation for $\ww$ introduces dissipation into the system.

\subsection{Dissipative part}
In this part, we will introduce additional physically motivated dissipation terms. First, we add a diffusive term to the equations of the conformation tensor $\CC$. As mentioned in the introduction the existence of such a diffusive term in thermodynamically consistent viscoelastic models has been investigated and verified in \cite{sueli, Malek}.
Furthermore, we add Navier-Stokes-type viscosity terms to the momentum equations.

Since both $q\I,\CC$ are modeling the viscoelastic effects, we expect them to decay to an equilibrium in the absence of all forces, which will be modeled by the relaxation terms.
The generic form for such a relaxation is given by
\begin{equation*}
	-\frac{1}{\tau_s(\phi)}g(\CC)\frac{\delta E}{\delta \CC}, \text{ or } -\frac{1}{\tau_b(\phi)}g(q)q.
\end{equation*}
We note in passing that in order to preserve frame invariant structure, the choice of $g(\CC)$ is restricted. In our work we have opted for the Peterlin model, i.e. we choose $g(\CC)=\trC\CC$. For the bulk stress we choose the simplest available mechanism, i.e. $g(q)=1$. Altogether we obtain the following resulting system
\begin{align}
	\frac{\d\phi}{\dtt} &= \div{\phi^2(1-\phi)^2\na\frac{\delta E}{\delta \phi} - \phi(1-\phi)\na(A(\phi)q)}, \nonumber \\
	\frac{\d q}{\dtt}   &= -\frac{1}{\tau_b(\phi)} q  + A_1(\phi)\div{\na(A(\phi)q) - \phi(1-\phi)\na\frac{\delta E}{\delta \phi}}, \label{eq:deriv_final} \\
	\frac{\d\u}{\dtt}   &= \div{\eta(\phi)\Du}-\nabla p -c_0\div{\na\phi\otimes\na\phi} - \div{\trC\CC}, \hspace{0.5cm} \di(\u) = 0, \nonumber \\
	\frac{\D\CC}{\Dtt}  &= -\frac{1}{\tau_s(\phi)}\trC(\trC\CC-\I) + \varepsilon\Delta\CC. \nonumber
\end{align}

\subsection{Reformulation and remarks}
In this subsection, we will suitably reformulate \eqref{eq:deriv_final} to obtain our model (\ref{eq:full_model}).

First, the Leslie-Erickson stress tensor $-c_0\div{\na\phi\otimes\na\phi}$ can be rewritten according to \cite{Strasser.2018}
\begin{equation*}
	- c_0\div{\nabla\phi\otimes\nabla\phi} = \mu\nabla\phi - \nabla\bigg(\frac{c_0}{2}\snorm{\na\phi}^2 + F(\phi) \bigg),
\end{equation*}
where the second term can be absorbed into the pressure $p$.
Reordering and expanding terms with the choices $$m(\phi)=\phi^2(1-\phi)^2,\;\; n(\phi)=\phi(1-\phi),\;\; \tau_s(\phi)^{-1}=h(\phi),\;\; f(\phi)=\frac{\partial \F}{\partial\phi}, \;\; \TT=\trC\CC,$$
yields the desired system governing viscoelastic phase separation.
\begin{tcolorbox}
	\begin{align}
		\label{eq:full_model_deriv}
		\begin{split}
			\frac{\partial \phi}{\partial t} + \u\cdot\nabla\phi&= \div{m(\phi)\nabla\mu} - \div{n(\phi)\nabla\big(A(\phi)q\big)} \\
			\frac{\partial  q}{\partial t}  + \u\cdot\nabla q&= -\frac{1}{\tau_b(\phi)}q + A(\phi)\div{ \nabla\big(A(\phi)q\big)}- A(\phi)\div{n(\phi)\nabla\mu} \\
			\frac{\partial \u}{\partial t} + (\u\cdot\nabla)\u &= \div{\eta(\phi)\Big(\nabla\u +(\nabla\u)^T\Big)} -\nabla p + \di(\TT) + \mu\nabla\phi  \\
			\frac{\partial \CC}{\partial t} + (\u\cdot\nabla)\CC &= (\nabla\u)\CC + \CC(\nabla\u)^T +  h(\phi)\trC\I - h(\phi)\trC^2\CC + \varepsilon\Delta\CC \\
			\di(\u) &= 0 \hspace{1cm} \TT = \tr{\CC}\CC  \hspace{1cm} \mu = - c_0\Delta\phi + f(\phi)
		\end{split}
	\end{align}
\end{tcolorbox}

 A formal proof of thermodynamic consistency  of \eqref{eq:full_model_deriv} is given in Theorem \ref{theo:energy_diss} with $\kappa=1$.

\begin{Remark}
	We note that the above derivation of model \eqref{eq:full_model_deriv} is consistent with the GENERIC formalism \cite{Grmela.1997,Grmela.1997b,brunk,burkhard}.
	GENERIC (general equation for the non-equilibrium reversible–irreversible coupling) is a well-accepted framework for the derivation of thermodynamically admissible evolution equations for non-equilibrium systems.
Analogously, one can consider the approach proposed recently by M\'{a}lek et al. \cite{Malek,MALEK201542} and derive a thermodynamically consistent model by prescribing the state variables, here $(\phi,q,\u,\CC)$, a free energy and a dissipation process modeling the energy dissipation. The free energy is given by
	\begin{align*}
		E &= \int F(\phi) + \frac{c_0}{2}|\nabla\phi|^2 + \int \frac{1}{2}|\u|^2 + \int \frac{1}{2}q^2 + \int \frac{1}{4}\trr{\CC}^2 - \frac{1}{2}\trr{\ln(\CC)}
	\end{align*}
	and dissipation process by
	\begin{align*}
		\int\zeta &= \int  \snorm*{n(\phi)\nabla\frac{\delta E}{\delta\phi} - \nabla(A(\phi)q)}^2 + \frac{\eta(\phi)}{2}\snorm*{\nabla\u +(\nabla\u)^T}^2 + \frac{1}{\tau_b(\phi)}q^2 \\
		& \hspace{1cm} + \frac{h(\phi)}{2}\trr{\TT}\trr{\TT + \TT^{-1} - 2\I} + \frac{\varepsilon}{2}|\nabla\trC|^2 - \frac{\varepsilon}{2}\nabla\CC :\nabla\CC^{-1}.
	\end{align*}
As pointed out above the conservative part of the model, i.e.~\eqref{eq:app_basic_conservative}, can be derived by means of the Poisson bracket formalism using only the state variables and the free energy, i.e. the dissipation process is zero. In the second step, the dissipative contributions are incorporated using a non-zero dissipation process.
\end{Remark}

As one can quickly recognize (\ref{eq:full_model_deriv}) differs only slightly from the studied model (\ref{eq:full_model}).
%Firstly,  the problem admits possibly two degenerated mobilities  $m(\phi)$ and $n(\phi)$.
The energy dissipation \eqref{eq:energy_dissipation} yields the estimate
%\begin{align*}
%\int \zeta  &= \int \bigg[\Big(\phi(1-\phi)\nabla\mu-\nabla\big(A(\phi)q\big)\Big)^2 + \frac{\eta(\phi)}{2}\Big(\nabla\u +(\nabla\u)^T\Big)^2 + %\frac{1}{\tau(\phi)}q^2 \\
%& \hspace{1cm} + \frac{h(\phi)}{2}\trr{\TT}\trr{\TT + \TT^{-1} - 2\I} + \frac{\varepsilon}{2}|\nabla\trC|^2 - \frac{\varepsilon}{2}\nabla\CC %:\nabla\CC^{-1}\bigg].
%\end{align*}
%Clearly, we obtain that
\begin{equation*}
	\phi(1-\phi)\nabla\mu-\nabla\big(A(\phi)q\big) \in L^2(0,T;L^2(\Omega)).
\end{equation*}
However, we do not have independent a priori estimates for $\nabla\mu$ and $\nabla(A(\phi)q)$. This is due to the cross-diffusion structure in the   $(\phi, q)$ subsystem. A typical technique to deal with such problems is to dominate the cross-diffusion by a diagonal diffusion. To cure this problem we have introduced  in \eqref{eq:full_model_deriv} a parameter $\kappa\in[0,1)$, i.e. $n(\phi):=\kappa n(\phi)$ which leads to our final viscoelastic phase separation model \eqref{eq:full_model}.

\clearpage
%\bibliography{paper_bib}
%\bibliographystyle{abbrv}

\end{document}